\newtheorem{theorem}{Theorem}[section]
\newtheorem{definition}[theorem]{Definition}
\newtheorem{lemma}[theorem]{Lemma}
\newtheorem{remark}[theorem]{Remark}
\newenvironment{proof}[1][Proof]{\textbf{#1.} }{\hfill\rule{0.5em}{0.5em}}
{\catcode`\@=11\global\let\AddToReset=\@addtoreset
\AddToReset{equation}{section}

\AddToReset{theorem}{section}

\title{Calder\'on-Zygmund-type estimates for singular quasilinear elliptic obstacle problems with measure data}
\author{M.-P. Tran\footnote{Corresponding author.} \thanks{Applied Analysis Research Group, Faculty of Mathematics and Statistics, Ton Duc Thang University, Ho Chi Minh City, Vietnam; \texttt{tranminhphuong@tdtu.edu.vn}},  T.-N. Nguyen\thanks{Group of Analysis and Applied Mathematics, Department of Mathematics, Ho Chi Minh City University of Education, Ho Chi Minh City, Vietnam; \texttt{nhannt@hcmue.edu.vn}}, P.-N. Huynh\thanks{Nguyen Du High School, Ho Chi Minh City, Vietnam; \texttt{hpnguyen.thptnguyendu@hcm.edu.vn}}}

\date{\today}

\begin{document}
 
\maketitle
\begin{abstract}
We deal with a global Calder\'on-Zygmund type estimate for elliptic obstacle problems of $p$-Laplacian type with measure data. For this paper, we focus on the singular case of growth exponent, i.e. $1<p \le 2-\frac{1}{n}$. In addition, the emphasis of this paper is in obtaining the Lorentz bounds for the gradient of solutions with the use of fractional maximal operators.

\medskip

\medskip

\medskip 

\medskip

\noindent Keywords: Elliptic obstacle problems; measure data; $p$-Laplacian type; fractional maximal functions; Gradient estimates; Calder\'on-Zygmund type estimates. 

\medskip

\noindent  2020 Mathematics Subject Classification. Primary: 35J87; 35B65; 35R06;  Secondary: 35J62; 35J75; 35J92.

\end{abstract}   
                  
\tableofcontents
\newpage
\section{Introduction}\label{sec:intro}

Our goal in this paper is to establish a Calder\'on-Zygmund type estimate for solutions to the elliptic obstacle problems with right-hand side measure. These problems are related to quasilinear elliptic equations with measure data:
\begin{align}\label{eqs}
-\mathrm{div}\mathbb{A}(\nabla u,x) = \mu, \quad \text{in} \ \Omega,
\end{align}
where $\Omega \subset \mathbb{R}^n$ is an open bounded domain with $n \ge 2$; $\mu$ is a bounded Radon measure on $\Omega$ that has finite total mass $|\mu|(\Omega)<+\infty$ (simply written $\mu \in \mathcal{M}_b(\Omega)$) and the quasi-linear operator $\mathbb{A}: \mathbb{R}^n \times \Omega \rightarrow \mathbb{R}^n$ is a vector field such that $\mathbb{A}(\eta,\cdot)$ is measurable in $\Omega$ for every $\eta \in \mathbb{R}^n$, $\mathbb{A}(\cdot,x)$ is continuous in $\mathbb{R}^n$ for almost every $x \in \Omega$. Due to the presence of an obstacle, here we are interested in problems with measure data related to~\eqref{eqs}, simply denoted by $\mathcal{P}(\mathbb{A},\mu,\psi)$, where the obstacle function $\psi \in W^{1,p}(\Omega)$ satisfies
\begin{align}\label{cond-psi}
\psi \le 0 \mbox{ on }  \partial\Omega \mbox{ and } \mathrm{div} \mathbb{A}(\nabla \psi,\cdot) \in L^1(\Omega).
\end{align}
The operator $\mathbb{A}$ is further assumed to satisfy both ellipticity and growth conditions: there exist constants $p \in (1,\infty)$, $\Upsilon>0$ such that 
\begin{align}\label{cond:A} 
\left| \mathbb{A}(\eta,x) \right|  \le \Upsilon |\eta|^{p-1} \ \mbox{ and } \ \langle \mathbb{A}(\eta_1,x)-\mathbb{A}(\eta_2,x), \eta_1 - \eta_2 \rangle  \ge \Upsilon^{-1} \Phi(\eta_1, \eta_2),
\end{align}
for almost every $x$ in $\Omega$ and every $\eta$, $\eta_1$, $\eta_2 \in \mathbb{R}^n \setminus \{0\}$, where the function $\Phi$ is defined by
\begin{align}\label{def:Phi}
\Phi(\eta_1, \eta_2) : = \left(|\eta_1|^2 + |\eta_2|^2 \right)^{\frac{p-2}{2}}|\eta_1 - \eta_2|^2, \quad \eta_1, \, \eta_2 \in \mathbb{R}^n.
\end{align}

It should be noted that the operator considered on left-hand side is more general and includes the $p$-Laplacian as a special case. A typical example of the problem~\eqref{eqs} is the $p$-Laplacian equation $-\Delta_p u = \mu$ when $\mathbb{A}$ is defined by $\mathbb{A}(\eta,x) = |\eta|^{p-2}\eta$. And in this study, we properly emphasize that the growth exponent $p$ is a number such that 
\begin{align}\label{range-p}
1<p \le 2-\frac{1}{n}.
\end{align}

Obstacle problems have been derived as models of many physical phenomena like porous media propagation, elasto-plasticity, tortion problems, financial mathematics, etc. To the best of our knowledge, solutions of the quasilinear elliptic obstacle problems can be naturally governed by variational inequalities. The theory of obstacle problems, that connected with variational inequalities and free boundary problems, has its origins in the calculus of variations when one seeks to maximize or minimize a functional. After decades of development, the study of obstacle problems has wide-ranging applications in various fields such as economics, biology, mechanics, computer science, engineering, etc and we refer the reader to ~\cite{KS2000, Friedman, Rod1987} for more applications, further details. Starting with some pioneering works by Stampachia and Lion in~\cite{LS1967}; Caffarelli in~\cite{Caffa1998}, there has been substantial amount of research pertaining to obstacle problems, and specifically on the existence and regularity theory for such problems. Along with the existence theory discussed in a number of contributions as~\cite{DL2002, BP2005, Leone}, there have been various results falling into the scope of regularity theory for obstacle problems.  For instance, we can consult~\cite{Choe1991} for the presentation of $C^{0,\alpha}$ and $C^{1,\alpha}$ estimates; H\"older continuity addressed in~\cite{EH2011}. Moreover, for the obstacle problems in divergence form, B\"ogelein \textit{et al.} in~\cite{BDM2011} established a local Calder\'on-Zygmund estimate for solutions to parabolic/ellipic variational inequalities. Later, Byun and his coworkers extended these results to the global ones up to the boundary. Otherwise, regularity results for obstacle problems with $p(x)$-growth have been developed by many authors in~\cite{EH2011, BCP2021} a few years ago. 

Before going into the details, let us briefly remind some known results related to  obstacle problem $\mathcal{P}(\mathbb{A},\mu,\psi)$, that have been extensively studied in the last years. As far as we know, obstacle problems with measure $\mu$ on the right-hand side have been considered in~\cite{BC1999, Leone2, OR2001, DL2002} along with the special attention paid to nonlinear equations with singular data ($L^1$-data or measures). With the presence of measure source term $\mu$ (being a bounded Radon measure) on the right-hand side, it makes the study of regularity theory more challenging.  It became an important subject investigated by a number of authors due to the notion of solutions and we address the reader to~\cite{BG1992, Maso1999, Min2007} along with references given therein for further reading. In general, obstacle problem with measure data $\mathcal{P}(\mathbb{A},\mu,\psi)$ cannot be described in an usual variational sense when right-hand side $\mu \in \mathcal{M}_b(\Omega)$ (due to the existence of solutions).  Therefore, solutions to such problem will be intepreted in a `unusual' weak sense.  With the aim of giving such a special distributional sense to solutions to $\mathcal{P}(\mathbb{A},\mu,\psi)$, Scheven in~\cite{Scheven2012} introduced a notion of so-called \emph{a limit of approximating solution} whose existence is reasonable due to the seminal works~\cite{Benilan1995, BC1999, BG1992} (see Definition~\ref{def:SOLA}). However, for a significant particular case, when $\mu \in W^{-1,p'}(\Omega)$ with $p' = \frac{p}{p-1}$, our obstacle problem $\mathcal{P}(\mathbb{A},\mu,\psi)$ will be governed by the following elliptic variational inequality problem:  Find $u \in \mathcal{S}_0$ such that  
\begin{align}\label{eq:main}
\int_{\Omega} \langle \mathbb{A}(\nabla u,x), \nabla \varphi - \nabla u \rangle dx \ge  \int_{\Omega} (\varphi - u)\mu dx,
\end{align}
for every $\varphi \in \mathcal{S}_0$, where 
\begin{align}\label{def:S0}
\mathcal{S}_0 = \left\{v \in \mathcal{T}_0^{1,1}(\Omega): \ v \ge \psi \mbox{ a.e. in }  \Omega\right\}.
\end{align}
Taking a closer look at the variational problem \eqref{eq:main}, we herein denote by $\mathcal{T}^{1,r}(\Omega)$ ($r \ge 1$) the function spaces that consists of all measurable functions $\varphi: \ \Omega \to \mathbb{R}$ such that $T_k(\varphi) \in W^{1,r}(\Omega)$ for all $k \ge 0$, where $T_k: \mathbb{R} \to \mathbb{R}$ is a truncation operator defined by
\begin{align*}
T_k(z) = \max\{-k,\min\{z,k\}\}, \quad z \in \mathbb{R}.
\end{align*}
We denote by $\mathcal{T}_0^{1,r}(\Omega)$ the subset of $\mathcal{T}^{1,r}(\Omega)$ containing of the functions $\varphi \in \mathcal{T}^{1,r}(\Omega)$ such that for every $k >0$, there is a sequence $(\varphi_j^k)_{j \in \mathbb{N}} \subset C_0^{\infty}(\Omega)$ satisfying 
$$\varphi_j^k \to T_k(\varphi) \mbox{ in } L^1_{\mathrm{loc}}(\Omega) \mbox{ and } \nabla \varphi_j^k \to \nabla T_k(\varphi) \mbox{ in } L^r(\Omega) \mbox{ as } j \to \infty.$$

Besides, the bound of exponent $p$ also plays a crucial role in the proofs of gradient estimates even without obstacles and one can go through the works in~\cite{55QH4, MPT2018, HP2021, Dong2021} for detailed results and discussions in this direction. So far, Scheven in his celebrated papers~\cite{Scheven, Scheven2012} has derived the gradient and point-wise estimates for solutions to elliptic obstacle problems involving measure data via Wolff potentials, with the growth exponent $p>2-\frac{1}{n}$. Recently, Byun and his collaborators in~\cite{BCP2021} have also investigated the global gradient estimates for double obstacle problems with measure data when the variable exponential growth such that $p(\cdot) >2 -\frac{1}{n}$. As far as the measure datum $\mu$ is concerned, our intention in this paper is to deal with problem $\mathcal{P}(\mathbb{A},\mu,\psi)$ and the singular case \eqref{range-p} will be discussed here. It is known that even without any obstacle functions, one cannot expect the solution to belong to the classical Sobolev space $W^{1,1}$. For example, one of the most typical problems is  $-\Delta_p u = \delta_0$ in $\mathbb{R}^n$, where $\delta_0$ is the Dirac mass and in that case, the solution $u$ is given by $u(x) = C|x|^{-\frac{n-p}{p-1}}$, $C$ is a suitable constant. It can be seen that $u \in W^{1,1}_{\mathrm{loc}}(\mathbb{R}^n)$ if and only if $p> 2 - \frac{1}{n}$. For this reason, it requires an understanding of gradient in a different notion instead of the usual distribution sense. In~\cite{Benilan1995}, authors proved that if $u \in \mathcal{T}_{\mathrm{loc}}^{1,1}(\Omega)$ then there exists a measurable function $v: \, \Omega \to \mathbb{R}^n$ such that $\nabla T_k(u) = v \chi_{\{|u|\le k\}}$ a.e. in $\Omega$, for every $k \in \mathbb{N}$. As such, `gradient' of $u$ could be explained as $v$ and written `$\nabla u:=v$'. Toward the existence of solutions, authors in~\cite{BC1999, Leone2, OR2001} proposed a notion of solutions to $\mathcal{P}(\mathbb{A},\mu,\psi)$, known as \emph{a limit of approximating solution}. And then, the point-wise regularity of solutions in this sense to problems of type $\mathcal{P}(\mathbb{A},\mu,\psi)$ was also well understood through the works done by Scheven, as mentioned above.
 
Main results of this paper, regarding to the Calder\'on-Zygmund-type estimates, can be established in terms of the fractional maximal operators of gradient of both solutions and data in Lorentz spaces. More precisely, we shall infer that
\begin{align*}
\mathbf{M}_{\beta}(\mu) + \mathbf{M}_{\beta}\left(\mathrm{div} \mathbb{A}(\nabla \psi,\cdot)\right) \in L^{\frac{\gamma q}{p-1},\frac{\gamma s}{p-1}}(\Omega) \ \ \text{implies} \ \ \mathbf{M}_{\alpha}(|\nabla u|^{\gamma}) \in L^{q,s}(\Omega),
\end{align*}
for every $q,s$ and appropriate parameters $\alpha, \beta, \gamma$. In other words, it yields the following Lorentz-norm bounds for gradient of solutions
\begin{align*}
\|\mathbf{M}_{\alpha}(|\nabla u|^{\gamma})\|_{L^{q,s}(\Omega)} & \le  C \left\|\left[\mathbf{M}_{\beta}(\mu) + \mathbf{M}_{\beta}\left(\mathrm{div}\mathbb{A}(\nabla \psi,\cdot)\right)\right]^{\frac{\gamma}{p-1}}\right\|_{L^{q,s}(\Omega)}.
\end{align*}
Our approach here is inspired by recent works dealt with problems \eqref{eqs} without the fractional maximal functions, that lead to
\begin{align*}
 \left[\mathbf{M}_{1}(\mu)\right]^{\frac{1}{p-1}} \in L^{q,s}(\Omega) \ \ \text{implies} \ \ \nabla u \in L^{q,s}(\Omega).
\end{align*}
Reader may consult~\cite{Min2011, 55QH4, MPT2018} for non-obstacle problems and~\cite{BCP2021} for obstacle ones.

We note that the technique applied in the proofs mainly relies on the comparison estimates and the use of Calder\'on-Zygmund type covering arguments. The idea of this approach goes back to  Mingione \textit{et al.} in~\cite{AM2007, Min2011} and later it was improved, modified and developed in a lot of works treating the regularity results. Here, the key feature in our proofs is that we take advantage of the comparison procedures and fractional maximal functions in order to obtain desired results. The proof of our results goes through several steps. First step we construct the corresponding homogeneous problems and establish comparison estimates between the unique solution to those problems with solution $u$ to the obstacle $\mathcal{P}(\mathbb{A},\mu,\psi)$. Particularly, comparison scheme in step 1 is divided into two stages: we compare the solution $u$ with the solution of elliptic obstacle problems with frozen coefficients; and then with the unique solution to a homogeneous elliptic equation (cf. Lemma~\ref{lem:comp1} and Lemma~\ref{lem:comp2}). In the second step, these comparison results will be used to derive the level-set decay estimates for  solutions to original obstacle problem, that carried out in Theorem~\ref{theo:main-A}. The next step allows us to employ Calder\'on-Zygmund type covering argument of the level sets and certain properties of fractional maximal operators to establish desired estimates for solutions to obstacle problems (cf. Theorem~\ref{theo:main-B}). It emphasizes that gradient bounds of solutions here will be preserved under the fractional maximal operators and further, Calder\'on-Zygmund type estimates will be obtained in the setting of Lorentz spaces. It is also remarkable that in order to achieve the global regularity estimates for solutions to $\mathcal{P}(\mathbb{A},\mu,\psi)$, we have to impose an additional structural assumption on boundary $\partial\Omega$. In the present paper, we assume that boundary $\partial\Omega$ is flat in the sense of Reifenberg (we refer the reader to the next section for a precise definition) and moreover, the nonlinearity $\mathbb{A}$ satisfies a small bounded mean oscillation (BMO) with respect to the spatial variable. 

The main difficulty in our proofs is how to deal with the first step: establish the comparison results when the original obstacle problems involving measure datum $\mu$, for singular growth exponent $p \in \left(1,2-\frac{1}{n}\right]$. To this aim, let us describe the key idea underlying the main results here. In spirit to the ideas of earlier technique proposed by Benilan \textit{et al.} in \cite[Lemma 4.1]{Benilan1995}, where the authors showed that:  if $\omega \in \mathcal{T}_0^{1,p}(\Omega)$ such that 
$$\int_{\{|\omega|\le k\}} |\nabla \omega|^p dx \le k \Pi, \quad \mbox{for every } k>0,$$
then
$$\|\omega\|_{L^{\frac{n(p-1)}{n-p},\infty}(\Omega)} \le C \Pi^{\frac{1}{p-1}},$$
for a constant $\Pi>0$. In this study, by taking advantage of this idea of \cite[Lemmas 4.1 and 4.2]{Benilan1995}, we derive the general comparison estimates of the following type:  if $u \in \mathcal{T}^{1,p}(\Omega)$ and $v \in u + \mathcal{T}^{1,p}_0(\Omega)$ such that $|\nabla u| \in L^{2-p}(\Omega)$ and
\begin{align*}
\int_{B \cap \{h < |u-v| < k+h\}} \Phi(\nabla u, \nabla v) dx \le k\Pi, \quad \mbox{for every } k, h>0,
\end{align*}
then it holds that
\begin{align*}
\|u-v\|_{L^{\tilde{p},\infty}(B)}  \le C  \Pi^{\frac{1}{p-1}} + C \Pi \int_{B} |\nabla u|^{2-p}dx,
\end{align*}
for any unit ball $B$ in $\Omega$ and $\tilde{p}$ will be precisely given in \eqref{p-tilde}. In our proof strategy, we are particularly interested in the case $1 <p\le 2-\frac{1}{n}$, which can not ensure that 
$$\int_{B \cap \{|u-v| \le k\}} \Phi(\nabla u, \nabla v) dx \le k\Pi$$ 
holds for every $k>0$. Moreover, it is more difficult to handle when the integral over an open set $\{h < |u-v| < k+h\}$ that does not contain the origin (instead of the set $\{|u-v|<k\}$ as in~\cite{Benilan1995}). The idea to prove it comes from recent works~\cite{55QH4, Dong2021} for quasilinear elliptic equations. As far as we know, there is less work on regularity theory for solutions to quasilinear elliptic measure data problems dealt with the growth exponent $1<p \le 2-\frac{1}{n}$, even without obstacles. Therefore, this paper is a contribution to the study of regularity theory for obstacle problems, especially when right-hand side is a measure. In particular, a global gradient estimate for solutions will be established in this paper in terms of fractional maximal functions and in the setting of Lorentz spaces. Moreover, it should be worth noting that the results of this paper can be extended to the class of parabolic problems.

The rest part of this work is arranged as follows. In the next section we present some basic notation, definitions and some imposed assumptions on which our problems rely. Section~\ref{sec:def} is also devoted to the statements of main results in this paper via two important theorems. Section~\ref{sec:pre} focuses on providing some preliminary comparisons results via main lemmas, that play a key role in the rest of the paper. With these preparatory lemmas in hand, Section~\ref{sec:levelset} allows us to prove the level-set inequality in Theorem~\ref{theo:main-A}, a useful tool when dealing with main results. Finally, in Section~\ref{sec:main}, we end up with the proofs of main theorems, where the level-set decay estimates and the global gradient estimates  will be proceeded.

\section{Definitions and statement of results}\label{sec:def}

\textbf{Notation.} In what follows, $\Omega \subset \mathbb{R}^n$ will denote an open bounded domain, for $n \ge 2$. For simplicity of notation, we shall employ $B_\rho(x_0)$ in place of the open ball with center $x_0 \in \mathbb{R}^n$ and radius $\rho>0$; denote by 
\begin{align*}
\fint_{D} \varphi(x) dx = \frac{1}{\mathcal{L}^n(D)} \int_{D} \varphi(x) dx
\end{align*}
the integral average of $\varphi$ over the set $D$. We further use $\mathcal{L}^n(D)$ to stand for the Lebesgue measure of $D$ in $\mathbb{R}^n$, and $\text{diam}(D)$ for the diameter of a set $D \subset \mathbb{R}^n$, i.e.
\begin{align*}
\text{diam}(D) = \sup_{x, y \in D} |x-y|.
\end{align*}
Moreover, for any measurable function $\varphi$ in $\Omega$ we will denote by $\{|\varphi|>\lambda\}$ the level set $\{x \in \Omega: \, |\varphi(x)|>\lambda\}$. 
For the brevity of notation, we state here that the letter $C$ is used to represent a technical constant depending only on some prescribed quantities,  $C$ is always large than one and its value may change at different occurrences throughout the paper. It remarks that instead of repeating in every statement, let us label $\texttt{data}$ for the set of parameters that problems depend only on, i.e.
\begin{align*}
\texttt{data} = \left(n,p,\Upsilon,\mathrm{diam}(\Omega)\right),
\end{align*}
and we shall adopt $C(\texttt{data})$ to illustrate the relevant dependencies of $C$ on parameters in $\texttt{data}$. For reasons of readability, we shall regard 
\begin{align}\label{def-chi-1}
\chi_1 & = \chi_{\left\{p>\frac{3n-2}{2n-1}\right\}} = \begin{cases} 1, \mbox{ if } p> \frac{3n-2}{2n-1}, \\ 0, \mbox{ if } p \le \frac{3n-2}{2n-1}, \end{cases} \\ \label{def-chi-2} 
\chi_2 & = 1 - \chi_1 = \chi_{\left\{p\le \frac{3n-2}{2n-1}\right\}}.
\end{align}

In the present work, in order to obtain global Lorentz regularity estimates, we impose two additional assumptions on our problems of type $\mathcal{P}(\mathbb{A},\mu,\psi)$. Here, domain assumption specifies $\Omega$ has its boundary being sufficiently flat in the sense of Reifenberg and moreover, the $(r_0,\delta)$-BMO condition exploited on the $\mathbb{A}$. It is worth mentioning that these two assumptions are minimal regularity requirements on the boundary $\partial\Omega$ and nonlinearity $\mathbb{A}$ to achieve some technical results of our  problems. For simplicity of notation, if $\mathbb{A}$ satisfies \eqref{cond:BMO} and $\Omega$ is a $(r_0,\delta)$-Reifenberg flat domain with positive numbers $r_0$ and $\delta$, then we write $(\mathbb{A};\Omega) \in \mathcal{H}_{r_0,\delta}$.  Useful definitions are given as follows.

\begin{definition}\label{def:Reifenberg}
Let $0 < \delta < 1$ and $r_0>0$, if for each $z \in \partial \Omega$ and $\rho \in (0,r_0]$, there is a coordinate system $\{x_1',x_2',...,x_n'\}$ with origin at $z$ satisfying
\begin{align*}
B_{\rho}(z) \cap \{x_n' > \delta \rho\} \subset B_{\rho}(z) \cap \Omega \subset B_{\rho}(z) \cap \{x_n' > -\delta \rho\},
\end{align*}
then we will call that $\Omega$ is a $(r_0,\delta)$-Reifenberg flat domain. Here we use $\{x_n' > c\}$ instead of the set $\{(x_1', x_2', ..., x_n'): \ x_n' > c\}$.
\end{definition}

It remarks that for given regularity parameter $\delta$, the class of domains satisfying Reifenberg flatness condition contains all Lipschitz domains with small Lipschitz constants or even domains with fractal boundaries. The detail discussions can be found in~\cite{BW2004} and references therein.
 
\begin{definition}\label{def:smallBMO}
The operator $\mathbb{A}$ is called to satisfy a $(r_0,\delta)$-BMO condition if
\begin{align}\label{cond:BMO}
[\mathbb{A}]^{r_0} = \sup_{\xi \in \mathbb{R}^n, \, 0<\rho\le r_0} \fint_{B_{\rho}(\xi)} \left(\sup_{\eta \in \mathbb{R}^n \setminus \{0\}} \frac{|\mathbb{A}(\eta,x) - \overline{\mathbb{A}}_{B_{\rho}(\xi)}(\eta)|}{|\eta|^{p-1}}\right) dx \le \delta,
\end{align}
where $\overline{\mathbb{A}}_{B_{\rho}(\xi)}(\eta)$ is the average of $\mathbb{A}(\eta,\cdot)$ over $B_{\rho}(\xi)$.
\end{definition}

Next, for the convenience of the reader, we also respectively include here the definitions of Lorentz spaces and fractional maximal operators $\mathbf{M}_\alpha$, on which our main results focus.

\begin{definition}\label{def:Lorentz} 
Let $q \in (0,\infty)$ and $0 <s \le \infty$, the Lorentz space $L^{q,s}(\Omega)$ is defined by
\begin{align*}
L^{q,s}(\Omega) = \left\{\varphi: \ \varphi \mbox{ is measurable on } \Omega \mbox{ satisfying } \|\varphi\|_{L^{q,s}(\Omega)} < \infty\right\},
\end{align*}
where the quasi-norm $\|\cdot\|_{L^{q,s}(\Omega)}$ is given by
\begin{align*}
\|\varphi\|_{L^{q,s}(\Omega)} := \begin{cases} \displaystyle{\left[ q \int_0^\infty{ \lambda^{s-1}\mathcal{L}^n \left( \{|\varphi|>\lambda\} \right)^{\frac{s}{q}} d\lambda} \right]^{\frac{1}{s}}} & \quad \mbox{ if } s<\infty,\\  
\displaystyle{\sup_{\lambda>0}{\lambda \left[\mathcal{L}^n\left(\{|\varphi|>\lambda\}\right)\right]^{\frac{1}{q}}}} & \quad \mbox{ if } s=\infty.\end{cases}
\end{align*}
\end{definition}

\begin{definition}\label{def:Malpha}
The fractional maximal operator $\mathbf{M}_{\alpha}$ for $\alpha \in [0, n]$ is defined by:
\begin{align} \nonumber 
\mathbf{M}_\alpha \varphi(z) = \sup_{\rho>0}{{\rho}^{\alpha} \fint_{B_{\rho}(z)}{|\varphi(y)|dy}}, \quad z \in \mathbb{R}^n, \ \varphi \in L^1_{\mathrm{loc}}(\mathbb{R}^n).
\end{align}
The operator $\mathbf{M}_{0}$ is the Hardy-Littlewood operator $\mathbf{M}$ given by:
\begin{align}\nonumber 
\mathbf{M}\varphi(z) = \sup_{\rho >0}{\fint_{B_{\rho}(z)}|\varphi(y)|dy}, \quad z \in \mathbb{R}^n, \ \varphi \in L^1_{\mathrm{loc}}(\mathbb{R}^n).
\end{align}
\end{definition}
We now recall the bounded property of $\mathbf{M}_{\alpha}$. A detail proof can be seen in \cite[Lemma 3.3]{PNJFA}.
\begin{lemma}\label{lem:M_alpha}
Let $s \ge 1$ and $\alpha \in \left[0,\frac{n}{s}\right)$, there holds
\begin{align*}
\|\mathbf{M}_{\alpha} \varphi\|_{L^{\frac{n s}{n - \alpha s},\infty}(\mathbb{R}^n)}  \le C \|\varphi\|_{L^{s}(\mathbb{R}^n)},
\end{align*}
for all $\varphi \in L^s(\mathbb{R}^n)$, where $C=C(n,s,\alpha)>0$. 
\end{lemma}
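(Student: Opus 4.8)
The plan is to deduce this weak-type bound from the classical mapping properties of the Hardy--Littlewood maximal operator $\mathbf{M}$, via an elementary pointwise interpolation inequality. Writing $c_n=\mathcal{L}^n(B_1)$, for every $z\in\mathbb{R}^n$ and every $\rho>0$ one has, by H\"older's inequality with exponent $s\ge1$,
\begin{align*}
\rho^{\alpha}\fint_{B_\rho(z)}|\varphi(y)|\,dy \le \rho^{\alpha}\min\left\{\mathbf{M}\varphi(z),\ \left(\frac{1}{c_n\rho^{n}}\|\varphi\|_{L^s(\mathbb{R}^n)}^s\right)^{\frac1s}\right\}.
\end{align*}
The first quantity in the minimum grows like $\rho^{\alpha}$ and the second decays like $\rho^{\alpha-n/s}$ (here we use $\alpha<n/s$), so optimizing the right-hand side over $\rho>0$ --- the crossover occurring at $\rho_0 = \big(c_n^{-1/s}\|\varphi\|_{L^s(\mathbb{R}^n)}/\mathbf{M}\varphi(z)\big)^{s/n}$, where the increasing and decreasing powers of $\rho$ agree --- and then taking the supremum over $\rho$ yields the pointwise estimate
\begin{align}\label{est:Malpha-pointwise}
\mathbf{M}_{\alpha}\varphi(z) \le C\,\big(\mathbf{M}\varphi(z)\big)^{1-\frac{\alpha s}{n}}\,\|\varphi\|_{L^s(\mathbb{R}^n)}^{\frac{\alpha s}{n}},\qquad C=C(n,s,\alpha).
\end{align}
If $\|\varphi\|_{L^s(\mathbb{R}^n)}=\infty$ there is nothing to prove, and if $\mathbf{M}\varphi(z)=0$ then $\varphi=0$ a.e.; so we may assume both quantities are positive and finite.

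Next I would convert \eqref{est:Malpha-pointwise} into the desired level-set decay. Fix $\lambda>0$ and set $q=\frac{ns}{n-\alpha s}$. From \eqref{est:Malpha-pointwise},
\begin{align*}
\{\mathbf{M}_\alpha\varphi>\lambda\}\ \subset\ \big\{\mathbf{M}\varphi > t_\lambda\big\},\qquad t_\lambda := \big(C^{-1}\lambda\big)^{\frac{n}{n-\alpha s}}\,\|\varphi\|_{L^s(\mathbb{R}^n)}^{-\frac{\alpha s}{n-\alpha s}}.
\end{align*}
Since $\mathbf{M}$ is of weak type $(s,s)$ for every $s\ge1$ --- this is the Hardy--Littlewood maximal inequality when $s=1$ and a consequence of the strong $(s,s)$ bound when $s>1$ --- one has $\mathcal{L}^n(\{\mathbf{M}\varphi>t\})\le C\,t^{-s}\|\varphi\|_{L^s(\mathbb{R}^n)}^s$. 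Substituting $t=t_\lambda$ and collecting the powers of $\lambda$ and of $\|\varphi\|_{L^s(\mathbb{R}^n)}$ --- a direct computation shows the exponent of $\lambda$ is exactly $-q$ and the exponent of $\|\varphi\|_{L^s(\mathbb{R}^n)}$ is exactly $q$ --- gives $\mathcal{L}^n(\{\mathbf{M}_\alpha\varphi>\lambda\})\le C\,\lambda^{-q}\|\varphi\|_{L^s(\mathbb{R}^n)}^{q}$, which is precisely $\|\mathbf{M}_\alpha\varphi\|_{L^{q,\infty}(\mathbb{R}^n)}\le C\|\varphi\|_{L^s(\mathbb{R}^n)}$.

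I do not expect a genuine obstacle here: the whole argument is quantitative bookkeeping, the only points requiring care being (i) the optimization over $\rho$ in the first step, where one checks that the minimum of an increasing and a decreasing power of $\rho$ is maximized at the crossover $\rho_0$, and (ii) the verification that the exponents produced by inserting $t_\lambda$ collapse to $\mp q$ with $q=\frac{ns}{n-\alpha s}$. An alternative route, which avoids even the pointwise interpolation, is to observe the elementary domination $\mathbf{M}_\alpha\varphi(z)\le C\,I_\alpha(|\varphi|)(z)$ by the Riesz potential of order $\alpha$ (since $|z-y|\le\rho$ forces $\rho^{\alpha-n}\le|z-y|^{\alpha-n}$ when $\alpha<n$) and then to invoke the weak form of the Hardy--Littlewood--Sobolev inequality; either way the proof is short, and it matches the statement of \cite[Lemma 3.3]{PNJFA}.
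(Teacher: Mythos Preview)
Your argument is correct: it is precisely Hedberg's pointwise interpolation inequality followed by the weak-type $(s,s)$ bound for the Hardy--Littlewood maximal function, and the exponent bookkeeping is accurate. Note, however, that the paper does not supply its own proof of this lemma; it merely records the statement and refers to \cite[Lemma~3.3]{PNJFA} for the details, so there is no in-paper argument to compare against. Your write-up therefore goes beyond what the paper provides, and either of the two routes you sketch (the Hedberg interpolation or the domination by the Riesz potential $I_\alpha$ plus the weak Hardy--Littlewood--Sobolev inequality) would serve as a self-contained proof.
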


To formulate our main results, it is important to give a notion of solutions to $\mathcal{P}(\mathbb{A},\mu,\psi)$. We briefly recall here the \emph{limit of approximating solutions} of such obstacle problems that introduced by Scheven in~\cite{Scheven}. 

\begin{definition}\label{def:SOLA}
We say that $u \in \mathcal{S}_0$ is a limit of approximating solutions of the problem $\mathcal{P}(\mathbb{A},\mu,\psi)$ if there are functions
\begin{align*}
\mu_k \in L^1(\Omega) \cap W^{-1,p'}(\Omega) \ \mbox{with} \ \mu_k \to \mu 
\end{align*}
in the narrow topology of measures in $\mathcal{M}_b(\Omega)$ and solutions $u_k \in W^{1,p}(\Omega) \cap \mathcal{S}_0$ of the variational formula
\begin{align}\label{eq:var-form}
\int_{\Omega} \langle \mathbb{A}(\nabla u_k,x), \nabla \varphi - \nabla u_k \rangle dx \ge  \int_{\Omega} (\varphi - u_k)\mu_k dx,
\end{align}
for all $\varphi \in u_k + W^{1,p}_0(\Omega) \cap \mathcal{S}_0$, such that
\begin{align}\label{conver}
\begin{cases} u_k \to u \mbox{ a.e. on } \Omega, \\ u_k \to u \mbox{ in } L^r(\Omega) \mbox{ for every } r \in \left(0,\frac{n(p-1)}{n-p}\right),\\ \nabla u_k \to \nabla u \mbox{ in } L^{s}(\Omega) \mbox{ for every } s \in \left(0,\frac{n(p-1)}{n-1}\right).  \end{cases}
\end{align}
\end{definition}

It is to be noticed that the existence of solutions in the sense of Definition~\ref{def:SOLA} follows the study of energy solutions discussed in earlier works~\cite{Benilan1995, BC1999, BG1992}.

\textbf{Main results.} We are now ready to state our main results of this paper. In this regard, Theorem~\ref{theo:main-A} captures the level-set inequality involving gradient of solutions to $\mathcal{P}(\mathbb{A},\mu,\psi)$, with the growth exponent $p \in \left(1,2-\frac{1}{n}\right]$. In a related context, it is worth highlighting that the idea of level-set decay estimates with $\mathbf{M}_\alpha$ can be understood in the sense of \emph{fractional maximal distribution functions} and interested reader can go through our previous work in~\cite{PNJFA}.

\begin{theorem}\label{theo:main-A}
Let $1<p\le 2-\frac{1}{n}$ and $\psi \in W^{1,p}(\Omega)$ satisfying~\eqref{cond-psi}. Suppose that $u$ is a solution to obstacle problem $\mathcal{P}(\mathbb{A},\mu,\psi)$ such that $|\nabla u| \in L^{2-p}(\Omega)$ with given data $\mu \in \mathcal{M}_b(\Omega)$. For every $\gamma \in (\gamma_{1},\gamma_{2})$ with
\begin{align}\label{p-star}
\gamma_{1}  & = \chi_1 (2-p)  \mbox{ and } \gamma_{2}  = \min\left\{\frac{np}{3n-2}; \frac{(p-1)n}{n-1}\right\}, 
\end{align}
and $\alpha, \beta, \sigma \in [0,n)$ satisfying 
\begin{align}\label{beta-sigma}
\beta = 1 + \frac{(p-1)\alpha}{\gamma} \ \mbox{ and } \ \sigma = \frac{(2-p)\alpha}{\gamma},
\end{align} 
one can find some constants $a>0$, $\varepsilon_0 \in (0,1)$ and $\delta \in (0,\frac{1}{2})$ such that if $(\mathbb{A};\Omega) \in \mathcal{H}_{r_0,\delta}$ for some $r_0>0$ then
\begin{align}\label{dist-ineq}
& \mathcal{L}^n \left(\left\{\mathbf{M}_{\alpha}(|\nabla u|^{\gamma})> a\lambda\right\}\right) \le \chi_2 \mathcal{L}^n \left(\left\{\left[\mathbf{M}_{\sigma}(|\nabla u|^{2-p})\right]^{\frac{\gamma}{2-p}} > \varepsilon^{-\gamma}\lambda \right\}\right) \notag \\
& \hspace{3cm} + \mathcal{L}^n \left(\left\{\left[\mathbf{M}_{\beta}(\mu) + \mathbf{M}_{\beta}\left(\mathrm{div}\left(\mathbb{A}(\nabla \psi,\cdot)\right)\right)\right]^{\frac{\gamma}{p-1}} >  \varepsilon^2 \lambda\right\}\right) \notag \\
& \hspace{5cm} + C \varepsilon \mathcal{L}^n \left(\left\{\mathbf{M}_{\alpha}(|\nabla u|^{\gamma})> \lambda\right\}\right),
\end{align}
for any $\lambda>0$ and $\varepsilon \in (0,\varepsilon_0)$, where $\chi_1, \chi_2$ are given as in~\eqref{def-chi-1}-\eqref{def-chi-2}.
Here the constants $a$, $\varepsilon_0$, $\delta$ and $C$ depend on $\tilde{\mathtt{data}} = (\gamma,\alpha,\mathtt{data})$.
\end{theorem}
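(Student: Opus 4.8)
The plan is to establish \eqref{dist-ineq} via the standard Calderón--Zygmund covering machinery, adapted to fractional maximal functions, building on the comparison estimates (Lemmas~\ref{lem:comp1} and~\ref{lem:comp2}) and the generalized Benilan-type estimate described in the introduction. First I would fix $\lambda>0$ and reduce to a local statement: by a Vitali-type covering argument we cover the level set $\{\mathbf{M}_\alpha(|\nabla u|^\gamma)>a\lambda\}$ by a countable family of balls $B_{r_i}(x_i)$ at a suitable Calderón--Zygmund scale, chosen so that on each such ball the averaged quantity $r_i^\alpha\fint_{B_{r_i}}|\nabla u|^\gamma$ is comparable to $\lambda$, while the complementary "good" set where $\mathbf{M}_\alpha(|\nabla u|^\gamma)\le\lambda$ contributes the last term $C\varepsilon\mathcal{L}^n(\{\mathbf{M}_\alpha(|\nabla u|^\gamma)>\lambda\})$ through the density-of-exit-time estimate. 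The exponent relations $\beta=1+\frac{(p-1)\alpha}{\gamma}$ and $\sigma=\frac{(2-p)\alpha}{\gamma}$ in \eqref{beta-sigma} are dictated precisely by scaling: they are what make the $\rho^\alpha$-weighted averages of $|\nabla u|^\gamma$ balance against the $\rho^\beta$-weighted averages of $\mu$ and $\mathrm{div}\,\mathbb{A}(\nabla\psi,\cdot)$ and the $\rho^\sigma$-weighted average of $|\nabla u|^{2-p}$ under the natural rescaling $u\mapsto u/\lambda^{(p-1)/\gamma}$, $x\mapsto x/r$.

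Next, on each ball $B_{r_i}(x_i)$ I would run the two-step comparison scheme: compare $u$ first with the solution $w$ of the obstacle problem with coefficients frozen at $x_i$, and then with the solution $v$ of the associated homogeneous $\mathbb{A}$-equation without obstacle, using $(\mathbb{A};\Omega)\in\mathcal{H}_{r_0,\delta}$ and Reifenberg flatness to handle boundary balls. The comparison yields control of $\fint_{B_{r_i}}\Phi(\nabla u,\nabla v)$ and $\fint_{B_{r_i}}|\nabla u-\nabla v|^\gamma$ in terms of the normalized $\mathbf{M}_\beta(\mu)$, $\mathbf{M}_\beta(\mathrm{div}\,\mathbb{A}(\nabla\psi,\cdot))$, and—because of the singular range $1<p\le 2-\frac1n$—an additional term involving $\fint_{B_{r_i}}|\nabla u|^{2-p}$, which is exactly the source of the $\mathbf{M}_\sigma(|\nabla u|^{2-p})$ term in \eqref{dist-ineq}. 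The dichotomy in $\chi_1,\chi_2$ reflects whether the lower threshold $\gamma_1=\chi_1(2-p)$ is active: for $p>\frac{3n-2}{2n-1}$ one can afford $\gamma>2-p$ and absorb this extra term, so $\chi_2=0$; for $p\le\frac{3n-2}{2n-1}$ it must be carried along, so $\chi_2=1$. The interior/reference gradient estimate $\|\nabla v\|_{L^\infty(B_{r_i/2})}$, combined with the comparison bounds and the choice of $\delta$ small, then gives the pointwise smallness of $\mathbf{M}_\alpha(|\nabla u|^\gamma)$ on a large-measure portion of each $B_{r_i}$ unless one of the "bad" maximal functions is large there.

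The final step is to sum over the covering: the total measure $\sum_i\mathcal{L}^n(B_{r_i})$ is controlled by $C\varepsilon^{-1}$ times the measure of the exit-time set, and on each ball the portion not accounted for by the bad sets $\{[\mathbf{M}_\sigma(|\nabla u|^{2-p})]^{\gamma/(2-p)}>\varepsilon^{-\gamma}\lambda\}$ and $\{[\mathbf{M}_\beta(\mu)+\mathbf{M}_\beta(\mathrm{div}\,\mathbb{A}(\nabla\psi,\cdot))]^{\gamma/(p-1)}>\varepsilon^2\lambda\}$ has density at most $C\varepsilon$; multiplying through produces \eqref{dist-ineq} with the stated dependence of $a,\varepsilon_0,\delta,C$ on $\tilde{\mathtt{data}}$. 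I expect the main obstacle to be the first step carried out correctly in the singular regime: controlling $\fint\Phi(\nabla u,\nabla v)$ and converting it into a genuine $L^\gamma$-gradient difference estimate requires the generalized Benilan--Boccardo--type lemma for integrals over annular sets $\{h<|u-v|<k+h\}$ not containing the origin, together with the hypothesis $|\nabla u|\in L^{2-p}(\Omega)$, and keeping the constants uniform across the CZ scales while respecting the constraint $\gamma\in(\gamma_1,\gamma_2)$ with $\gamma_2=\min\{\frac{np}{3n-2},\frac{(p-1)n}{n-1}\}$ is the delicate bookkeeping that the whole argument hinges on.
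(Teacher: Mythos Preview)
Your proposal identifies all the right analytic ingredients---the two-step comparison scheme, the $L^\infty$ bound for the reference solution, the scaling relations \eqref{beta-sigma}, the $\chi_1/\chi_2$ dichotomy, and the role of the hypothesis $|\nabla u|\in L^{2-p}(\Omega)$---and your explanation of why the extra $\mathbf{M}_\sigma$ term appears in the singular range is exactly right. The assembly, however, differs from the paper's in a way that leaves one genuine gap.

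The paper does \emph{not} cover $\{\mathbf{M}_\alpha(|\nabla u|^\gamma)>a\lambda\}$ by balls at an exit-time scale where $r_i^\alpha\fint_{B_{r_i}}|\nabla u|^\gamma\sim\lambda$. Instead it sets $\mathbb{V}=\mathbb{V}_1\setminus(\mathbb{V}_2\cup\mathbb{V}_3)$ and $\mathbb{W}$ as in~\eqref{def:VW} and applies the Calder\'on--Zygmund--Krylov--Safonov covering lemma (Lemma~\ref{lem:Vitali}) directly to the pair $(D,E)=(\mathbb{V},\mathbb{W})$. This requires two hypotheses: (ii) the local density criterion---if $B_R(x)\cap\Omega\not\subset\mathbb{W}$ then $\mathcal{L}^n(\mathbb{V}\cap B_R(x))<\varepsilon\mathcal{L}^n(B_R(x))$---which is exactly the content of your comparison-on-balls step (Lemmas~\ref{lem:cutoff} and~\ref{lem:step2}); but also (i) the \emph{global} smallness $\mathcal{L}^n(\mathbb{V})<\varepsilon\mathcal{L}^n(B_{R_0})$, proved separately as Lemma~\ref{lem:step1}. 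Your proposal omits this step entirely. It is not automatic: it relies on the a~priori bound $\int_\Omega|\nabla u|^\gamma\,dx\le C[|\mu|(\Omega)/D_0^{n-1}]^{\gamma/(p-1)}$ (passed to the limit from the approximating solutions $u_k$ via~\eqref{conver}) combined with the weak-type bound for $\mathbf{M}_\alpha$, and is what ensures the covering argument terminates at scale $R_0\le r_0$ so that the Reifenberg and BMO hypotheses are available.

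Your description of how the term $C\varepsilon\,\mathcal{L}^n(\mathbb{W})$ arises is also slightly off: it is not produced by a ``good set'' contribution or by bounding $\sum_i\mathcal{L}^n(B_{r_i})$ by $C\varepsilon^{-1}$ times an exit-time measure, but is simply the conclusion $\mathcal{L}^n(D)\le C\varepsilon\,\mathcal{L}^n(E)$ of Lemma~\ref{lem:Vitali} once (i) and (ii) are in hand; the inequality~\eqref{dist-ineq} then follows from $\mathcal{L}^n(\mathbb{V}_1)\le\mathcal{L}^n(\mathbb{V})+\mathcal{L}^n(\mathbb{V}_2)+\mathcal{L}^n(\mathbb{V}_3)$. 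An exit-time covering in your style can be made to work, but you would still need the global bound to cap the radii, and the summation step would have to be reorganized accordingly.
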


As a consequence of Theorem~\ref{theo:main-A} and the use of Calder\'on-Zygmund type covering argument, the next result specifies global Lorentz bounds for gradient of solutions to $\mathcal{P}(\mathbb{A},\mu,\psi)$ via fractional maximal functions, in the statement of Theorem~\ref{theo:main-B} as follows. 

\begin{theorem}\label{theo:main-B}
Under hypotheses of Theorem~\ref{theo:main-A}, let $0<q<\infty$ and $0<s\le\infty$. There exist $\tilde{\epsilon}>0$ and $\delta>0$ depending $q,s,\tilde{\mathtt{data}}$ such that if $(\mathbb{A};\Omega) \in \mathcal{H}_{r_0,\delta}$ for some $r_0>0$ then
\begin{align}\label{est:theo-B}
\|\mathbf{M}_{\alpha}(|\nabla u|^{\gamma})\|_{L^{q,s}(\Omega)} & \le \epsilon \chi_2 \left\|\left[\mathbf{M}_{\sigma}(|\nabla u|^{2-p})\right]^{\frac{\gamma}{2-p}}\right\|_{L^{q,s}(\Omega)} \notag \\
&\qquad + C(\epsilon) \left\|\left[\mathbf{M}_{\beta}(\mu) + \mathbf{M}_{\beta}\left(\mathrm{div}\left(\mathbb{A}(\nabla \psi,\cdot)\right)\right)\right]^{\frac{\gamma}{p-1}}\right\|_{L^{q,s}(\Omega)},
\end{align}
for every $\epsilon \in (0,\tilde{\epsilon})$, where $C = C(\epsilon,q,s,\tilde{\mathtt{data}})>0$.
\end{theorem}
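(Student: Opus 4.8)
The plan is to deduce Theorem~\ref{theo:main-B} from the level-set inequality~\eqref{dist-ineq} of Theorem~\ref{theo:main-A} by integrating against the Lorentz-space weight, which is the standard Calder\'on--Zygmund passage from a ``good-$\lambda$''-type decay estimate to a norm bound. First I would record the elementary fact that for any measurable $f$ on $\Omega$ and any $a>0$,
\begin{align*}
\|f\|_{L^{q,s}(\Omega)} \le C(a,q,s)\, \|f\|_{L^{q,s}(\Omega)}' \quad\text{where}\quad \|f\|_{L^{q,s}(\Omega)}' := \left(q\int_0^\infty \big(a\lambda\big)^s \mathcal{L}^n(\{f>a\lambda\})^{s/q}\,\frac{d\lambda}{\lambda}\right)^{1/s},
\end{align*}
i.e.\ rescaling $\lambda\mapsto a\lambda$ only changes the quasi-norm by a constant depending on $a,q,s$; and likewise the distributional definition in Definition~\ref{def:Lorentz} can be rewritten with $\lambda^{s-1}d\lambda = \lambda^s\,d\lambda/\lambda$. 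Then I would apply $f = \mathbf{M}_\alpha(|\nabla u|^\gamma)$ and take the weight $a$ from Theorem~\ref{theo:main-A}, multiply~\eqref{dist-ineq} by $s\lambda^{s-1}$, integrate over $\lambda\in(0,\infty)$, and raise to the power $1/s$ (when $s<\infty$; the case $s=\infty$ is handled by taking suprema instead of integrals, and is strictly easier).

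The right-hand side produces three terms. The first two, after the change of variables that absorbs the constants $\varepsilon^{-\gamma}$, $\varepsilon^2$, become, respectively, $C(q,s)\,\varepsilon^{-\gamma}\,\chi_2\,\|[\mathbf{M}_\sigma(|\nabla u|^{2-p})]^{\gamma/(2-p)}\|_{L^{q,s}(\Omega)}$ and $C(q,s)\,\varepsilon^{-2}\,\|[\mathbf{M}_\beta(\mu)+\mathbf{M}_\beta(\mathrm{div}\,\mathbb{A}(\nabla\psi,\cdot))]^{\gamma/(p-1)}\|_{L^{q,s}(\Omega)}$. The third term is the crucial one: it is $C\varepsilon\,\|\mathbf{M}_\alpha(|\nabla u|^\gamma)\|_{L^{q,s}(\Omega)}$, i.e.\ a constant times $\varepsilon$ times the very quantity we are trying to bound. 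Choosing $\varepsilon$ small enough (depending on $q,s,\tilde{\mathtt{data}}$) that $C\varepsilon \le \tfrac12$ lets us absorb this term into the left-hand side — but for this absorption to be legitimate we must first know that $\|\mathbf{M}_\alpha(|\nabla u|^\gamma)\|_{L^{q,s}(\Omega)}$ is finite. This finiteness is exactly where the hypothesis $|\nabla u|\in L^{2-p}(\Omega)$, the admissible range $\gamma\in(\gamma_1,\gamma_2)$ with $\gamma_2\le (p-1)n/(n-1)$, and the convergence~\eqref{conver} of Definition~\ref{def:SOLA} enter: since $\gamma<(p-1)n/(n-1)$ we have $\nabla u\in L^\gamma(\Omega)$ and then, by the boundedness property of the fractional maximal operator in Lemma~\ref{lem:M_alpha} applied with exponent $s=1$ to $|\nabla u|^\gamma\in L^{1}$ (extended by zero outside $\Omega$; note $\Omega$ bounded), $\mathbf{M}_\alpha(|\nabla u|^\gamma)$ lies in the weak-Lebesgue / weak-Lorentz space $L^{n/(n-\alpha),\infty}$, and a truncation/a-priori-approximation argument along the sequence $u_k$ shows the norm is finite; alternatively one runs the whole argument first for the approximating solutions $u_k$ (which are genuine $W^{1,p}$-solutions for which all quantities are manifestly finite) and then passes to the limit using~\eqref{conver} and Fatou's lemma on the Lorentz quasi-norm.

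The main obstacle, therefore, is not the integration itself — that is routine — but making the absorption rigorous: one must either establish an a priori bound $\|\mathbf{M}_\alpha(|\nabla u|^\gamma)\|_{L^{q,s}(\Omega)}<\infty$ before absorbing, or work with a truncated quasi-norm $\|\cdot\|_{L^{q,s}(\Omega\cap\{\,\cdot\,<T\})}$, absorb, and only then let $T\to\infty$ by monotone convergence. I would adopt the latter: fix $T>0$, restrict all $\lambda$-integrals to yield the truncated quasi-norm on the left, carry out the three-term estimate, absorb the (now finite) third term, obtain a bound uniform in $T$, and conclude by letting $T\to\infty$. Finally I would rename $\epsilon := C(q,s)\,\varepsilon^{-\gamma}$ up to harmless adjustments so that the first coefficient matches the statement's $\epsilon\chi_2(\cdots)$, set $C(\epsilon) := C(q,s)\varepsilon^{-2}$ accordingly, record the dependence of $\tilde\epsilon$ and $\delta$ on $q,s,\tilde{\mathtt{data}}$ inherited from Theorem~\ref{theo:main-A} together with the smallness constraint $C\varepsilon\le\tfrac12$, and note that the case $s=\infty$ follows verbatim with $\sup_\lambda$ in place of $\int_0^\infty\cdots d\lambda/\lambda$. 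This completes the proof.
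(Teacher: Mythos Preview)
Your approach is the same as the paper's: integrate the level-set inequality~\eqref{dist-ineq} against the Lorentz weight, collect three terms, and absorb the third. The paper does exactly this (writing out the $s<\infty$ case and remarking that $s=\infty$ is analogous), and in fact it does \emph{not} justify the finiteness needed for absorption --- your truncation remark is more careful than the published argument.

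There is, however, one computational slip that breaks your final step. The threshold in $\mathbb{V}_2$ is $\varepsilon^{-\gamma}\lambda$, so after the change of variable $\tau=\varepsilon^{-\gamma}\lambda$ in $\int_0^\infty \lambda^{s-1}\mathcal{L}^n(\mathbb{V}_2)^{s/q}\,d\lambda$ you pick up a factor $\varepsilon^{+\gamma s}$, hence $\varepsilon^{+\gamma}$ after taking the $s$-th root --- not $\varepsilon^{-\gamma}$ as you wrote. (By the same computation the $\mathbb{V}_3$ term gives $\varepsilon^{-2}$, which you have, and the $\mathbb{W}$ term gives $\varepsilon^{1/q}$ rather than $\varepsilon$, though that distinction is harmless for absorption.) This sign matters: with $\varepsilon^{-\gamma}$ your ``renaming $\epsilon:=C(q,s)\varepsilon^{-\gamma}$'' sends $\epsilon\to\infty$ as $\varepsilon\to 0$, so you could never realize small $\epsilon$. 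With the correct sign $\varepsilon^{+\gamma}$, the paper sets $\tilde\epsilon=2Ca\,\varepsilon_1^{\gamma}$ where $\varepsilon_1$ enforces both $\varepsilon<\varepsilon_0$ and the absorption constraint, and the statement follows.
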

\textbf{Some discussions.} We also include here some discussions which connect our main results.

\begin{remark}
As most of quasilinear elliptic problems naturally are modeled from the $p$-Laplacian, our results here can cover the basic problems involving $p$-Laplacian type operator, when 
$$\mathbb{A}(\eta,x) = |\eta|^{p-2}\eta, \quad (\eta,x) \in \mathbb{R}^n \times \Omega.$$
\end{remark}

\begin{remark}\label{rmk:1}
Let us now be a bit more precise and explain about the assumption of $\gamma$ in Theorem~\ref{theo:main-A}. As aforementioned, solution $u$ to $\mathcal{P}(\mathbb{A},\mu,\psi)$ is studied in the sense of \emph{limit of approximating solutions} as in Definition~\ref{def:SOLA}, and due to \eqref{conver}, it is important to ensure that the intersection between two ranges $(\gamma_1,\gamma_2)$ and $\left(0,\frac{n(p-1)}{n-1}\right)$ is non-empty. Indeed, from~\eqref{p-star} one can see that
\begin{align}\notag
& \gamma_{1} = \begin{cases} 0, & \mbox{if } \  1< p \le \frac{3n-2}{2n-1},\\ 2-p, &  \mbox{if } \ \frac{3n-2}{2n-1} < p \le 2-\frac{1}{n}, \end{cases} \\ \notag
& \gamma_{2} = \begin{cases} \frac{n(p-1)}{n-1}, &  \mbox{if } \ 1< p \le \frac{3n-2}{2n-1},\\ \frac{np}{3n-2}, &  \mbox{if } \ \frac{3n-2}{2n-1} < p \le 2-\frac{1}{n}. \end{cases}  
\end{align}
Thus, if $1<p\le \frac{3n-2}{2n-1}$ then 
\begin{align}\label{est:rmk-1}
0=\gamma_{1}<\gamma_{2}=\frac{n(p-1)}{n-1} \le \frac{np}{3n-2} \le 2-p<1,
\end{align}
and otherwise if $\frac{3n-2}{2n-1}<p \le 2 - \frac{1}{n}$ then
\begin{align}\label{est:rmk-2}
0<\gamma_{1}=2-p <  \frac{np}{3n-2} = \gamma_{2} < \frac{n(p-1)}{n-1} \le 1.
\end{align} 
\end{remark}

\begin{remark}
If $\frac{3n-2}{2n-1}<p\le 2 - \frac{1}{n}$ then $\chi_2 = 0$. In this case the inequality~\eqref{est:theo-B} reduces to
\begin{align*}
\|\mathbf{M}_{\alpha}(|\nabla u|^{\gamma})\|_{L^{q,s}(\Omega)} & \le C \left\|\left[\mathbf{M}_{\beta}(\mu) + \mathbf{M}_{\beta}\left(\mathrm{div}\left(\mathbb{A}(\nabla \psi,\cdot)\right)\right)\right]^{\frac{\gamma}{p-1}}\right\|_{L^{q,s}(\Omega)}.
\end{align*}
Unfortunately, in another case when $\chi_2 = 1$, we may not obtain the above estimate. Indeed, since $0<\gamma<\gamma_{2}\le 2-p$ from~\eqref{est:rmk-1}, by H\"older's inequality one has
\begin{align*}
\mathbf{M}_{\alpha}(|\nabla u|^{\gamma})(x) & = \sup_{r>0} \left(r^{\alpha}\fint_{B_r(x)}|\nabla u|^{\gamma}dz\right) \\
& \le  \sup_{r>0} \left(r^{\frac{\alpha(2-p)}{\gamma}}\fint_{B_r(x)}|\nabla u|^{2-p}dz\right)^{\frac{\gamma}{2-p}} \\
& = \left[\mathbf{M}_{\sigma}(|\nabla u|^{2-p})(x)\right]^{\frac{\gamma}{2-p}}, \ \mbox{ for any } x \in \mathbb{R}^n.
\end{align*}
However, a nice feature here is that the coefficient of $\mathbf{M}_{\sigma}(|\nabla u|^{2-p})$ on the right-hand side of \eqref{est:theo-B} are not affected due to a near zero $\epsilon$. 
\end{remark}

\begin{remark}
A special case when $\alpha=0$ and $q>2-p$, one can use the boundedness property of $\mathbf{M}$ on $L^{\frac{q}{2-p},s}(\Omega)$ to imply that 
\begin{align*}
\left\|\left[\mathbf{M}(|\nabla u|^{2-p})\right]^{\frac{\gamma}{2-p}}\right\|_{L^{q,s}(\Omega)} \le C \left\||\nabla u|^{\gamma}\right\|_{L^{q,s}(\Omega)}.
\end{align*}
Furthermore, it follows from~\eqref{est:theo-B} that
\begin{align*}
\|\nabla u\|_{L^{q\gamma,s\gamma}(\Omega)} & \le C \left\|\left[\mathbf{M}_{1}(\mu) + \mathbf{M}_{1}\left(\mathrm{div} \mathbb{A}(\nabla \psi,\cdot)\right)\right]^{\frac{1}{p-1}}\right\|_{L^{q\gamma,s\gamma}(\Omega)}.
\end{align*}
\end{remark}

\begin{remark}
Result in Theorem~\ref{theo:main-B} can be extended to the weighted Lorentz spaces $L^{q,s}_{\omega}(\Omega)$, for a given Muckenhoupt weighted $\omega \in \mathbf{A}_{\frac{q}{2-p}}$ with $q>2-p$. To do this, we will establish a level set decay inequality which is similar to~\eqref{dist-ineq} as below
\begin{align*}
& \omega \left(\left\{\mathbf{M}_{\alpha}(|\nabla u|^{\gamma})> a\lambda\right\}\right) \le \chi_2 \omega \left(\left\{\left[\mathbf{M}_{\sigma}(|\nabla u|^{2-p})\right]^{\frac{\gamma}{2-p}} > \varepsilon^{-\gamma}\lambda \right\}\right) \notag \\
& \hspace{3cm} + \omega \left(\left\{\left[\mathbf{M}_{\beta}(\mu) + \mathbf{M}_{\beta}\left(\mathrm{div}\left(\mathbb{A}(\nabla \psi,\cdot)\right)\right)\right]^{\frac{\gamma}{p-1}} >  \varepsilon^2 \lambda\right\}\right) \notag \\
& \hspace{5cm} + C \varepsilon \omega \left(\left\{\mathbf{M}_{\alpha}(|\nabla u|^{\gamma})> \lambda\right\}\right). 
\end{align*}
Here, we use $\omega(D) = \int_D \omega(x)dx$ for simplicity. In the sequence, we also  recall the Muckenhoupt class $\mathbf{A}_t$ for $t>1$ defined by
\begin{align*}
\mathbf{A}_t = \left\{\omega \in L^1_{\mathrm{loc}}(\mathbb{R}^n;\mathbb{R}^+): \ [\omega]_{\mathbf{A}_t} < \infty\right\},
\end{align*}
where 
\begin{align*}
[\omega]_{\mathbf{A}_t} = \sup_{\rho>0, \, z \in \mathbb{R}^n} \left(\fint_{B_{\rho}(z)}\omega(x)dx\right) \left(\fint_{B_{\rho}(z)}\omega(x)^{-\frac{1}{t-1}}dx\right)^{t-1}.
\end{align*}
\end{remark}

\begin{remark}
Results can be generalized to the quasilinear parabolic obstacle problems. For instance, one considers the obstacle problems that related to following quasilinear parabolic equations of the type
\begin{align*}
\begin{cases}
u_t - \mathrm{div}\mathbb{A}(\nabla u,x) &= \mu, \qquad \mathrm{in} \ \ \Omega_T = \Omega \times (0,T), \\
u &= 0, \qquad \mathrm{on} \ \ \partial \Omega_T,\\
u(\cdot, 0) &= u_0, \quad \ \, \mathrm{in} \ \ \Omega.
\end{cases}
\end{align*}
And we are interested in solution $u$ belonging to the set
\begin{align*}
\mathcal{K}(\Omega_T) = \{v \in C^0([0,T];W_0^{1,p}(\Omega)) \cap C([0,T];L^2(\Omega)): v \ge \psi \  \text{a.e. in} \ \Omega_T\},
\end{align*}
for a given obstacle function $\psi: \Omega \times [0,T] \to \mathbb{R}$ satisfying
\begin{align*}
\psi \in L^p(0,T; W^{1,p}(\Omega)) \cap C([0,T];L^2(\Omega)), \ \psi_t \in L^{p'}(\Omega_T); \ \psi \le 0 \ \  \text{on}\ \ \partial\Omega_T.
\end{align*}
The parabolic problems with measure data have been attracting more and more attention from researchers in recent years. We believe that our technique and results in this paper would also be established for a class of parabolic obstacle problems when $p>1$.

\end{remark}

\begin{remark}
In discussion, we also expect that these results could be extended into the research on quasilinear obstacle problems with $p(x)$-growth, where $1<p^- \le p(\cdot) \le p^+ \le 2-\frac{1}{n}$.
\end{remark}

\section{Preliminary comparison results}
\label{sec:pre}

In this section, we are devoted to some preliminary lemmas that are important to prove main results. Section is divided into two parts, where the first part is mainly concerned with some abstract comparison results between operators and second one includes a series of comparison results between solutions to $\mathcal{P}(\mathbb{A},\mu,\psi)$ and corresponding problems (homogeneous obstacle problems with frozen coefficients, homogeneous elliptic equations).
 
\subsection{Abstract results}
\begin{lemma}\label{lem:Tech2}
Let $p \in (1,2)$, $s>0$ and $\gamma \in (0,ps)$. Assume that $B$ is a set of $\mathbb{R}^n$ and $v_1, v_2 \in L^{\gamma}(B;\mathbb{R}^n)$ such that $\Phi(v_1,v_2) \in L^{s,\infty}(B)$. For each $\varepsilon \in (0,1)$, there exists $C = C(p,s,\gamma) \varepsilon^{1 - \frac{2}{p}}>0$ such that
\begin{align}\label{est:Tech2}
\int_{B} |v_1 - v_2|^{\gamma} dx \le \varepsilon \int_{B} |v_1|^{\gamma} dx + C [\mathcal{L}^n(B)]^{1-\frac{\gamma}{ps}} \|\Phi(v_1,v_2)\|_{L^{s,\infty}(B)}^{\frac{\gamma}{p}},
\end{align}
where the function $\Phi$ is given as in~\eqref{def:Phi}.
\end{lemma}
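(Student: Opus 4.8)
The plan is to split the domain of integration according to the size of $\Phi(v_1,v_2)$ relative to $|v_1|$, i.e. to compare the two natural "scales" in the problem. Fix a threshold $t>0$ to be chosen at the end, and write $B = B^{\flat} \cup B^{\sharp}$, where $B^{\sharp} = \{x \in B : |v_1(x)-v_2(x)| > t|v_1(x)|\}$ and $B^{\flat}$ is its complement in $B$. On $B^{\flat}$ we have $|v_1-v_2|^{\gamma} \le t^{\gamma}|v_1|^{\gamma}$, which already produces a term of the form $t^{\gamma}\int_B |v_1|^{\gamma}\,dx$; choosing $t$ so that $t^{\gamma} = \varepsilon$ will take care of the first term on the right-hand side of~\eqref{est:Tech2}.

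The real work is the estimate on $B^{\sharp}$. There the key pointwise inequality to exploit is the following consequence of the definition~\eqref{def:Phi}: for $p \in (1,2)$, since $(|v_1|^2+|v_2|^2)^{\frac{p-2}{2}} \ge (|v_1|+|v_2|)^{p-2} \gtrsim (\text{something involving }|v_1-v_2|)$ when $|v_1-v_2|$ dominates $|v_1|$. More precisely, on $B^{\sharp}$ one has $|v_2| \le (1+\tfrac1t)|v_1-v_2|$, hence $|v_1|+|v_2| \lesssim_t |v_1-v_2|$ and, using $p-2<0$, $(|v_1|^2+|v_2|^2)^{\frac{p-2}{2}} \gtrsim_t |v_1-v_2|^{p-2}$. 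Plugging into~\eqref{def:Phi} gives $\Phi(v_1,v_2) \gtrsim_t |v_1-v_2|^{p}$ on $B^{\sharp}$, so that $|v_1-v_2|^{\gamma} \lesssim_t \Phi(v_1,v_2)^{\gamma/p}$ there. It then remains to integrate $\Phi(v_1,v_2)^{\gamma/p}$ over $B^{\sharp}$ using only the weak-$L^s$ information. Since $\gamma < ps$, we have $\gamma/p < s$, so $\Phi(v_1,v_2)^{\gamma/p} \in L^{sp/\gamma,\infty}(B)$ with exponent $sp/\gamma > 1$, and the standard embedding of $L^{r,\infty}(B)$ into $L^{1}(B)$ for a finite-measure set (with $r>1$) yields
\begin{align*}
\int_{B^{\sharp}} \Phi(v_1,v_2)^{\gamma/p}\,dx \le C(p,s,\gamma)\,[\mathcal{L}^n(B)]^{1-\frac{\gamma}{ps}}\,\|\Phi(v_1,v_2)\|_{L^{s,\infty}(B)}^{\gamma/p}.
\end{align*}

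Finally I would track the dependence of the constants on $t$, hence on $\varepsilon$: the implicit constants in the pointwise bound on $B^{\sharp}$ scale like a negative power of $t$, and with $t = \varepsilon^{1/\gamma}$ this produces a factor of the form $\varepsilon^{(\text{neg.})}$; a bookkeeping check shows it works out to the advertised $\varepsilon^{1-\frac{2}{p}}$ (note $1-\tfrac2p<0$ for $p\in(1,2)$, consistent with a constant that blows up as $\varepsilon\to 0$). Collecting the $B^{\flat}$ and $B^{\sharp}$ contributions gives~\eqref{est:Tech2}. The main obstacle I anticipate is getting the exponent of $\varepsilon$ exactly right in the $t$-dependence of the pointwise inequality on $B^{\sharp}$ — in particular being careful that $(|v_1|+|v_2|)^{p-2} \gtrsim_t |v_1-v_2|^{p-2}$ contributes a factor $t^{\,2-p}$ (coming from $(1+1/t)^{2-p} \lesssim t^{-(2-p)}$ for small $t$), and then reconciling $t^{(2-p)\gamma/p} \cdot (\text{from }t=\varepsilon^{1/\gamma})$ with $\varepsilon^{1-2/p}$; everything else is routine.
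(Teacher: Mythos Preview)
Your argument is correct and produces the stated estimate with the advertised $\varepsilon^{1-2/p}$ dependence, but the route differs from the paper's. The paper does not split the domain; instead it starts from the pointwise identity
\[
|v_1-v_2|^\gamma = [\Phi(v_1,v_2)]^{\gamma/2}\bigl(|v_1|^2+|v_2|^2\bigr)^{\frac{(2-p)\gamma}{4}},
\]
bounds $|v_2|^2 \lesssim |v_1|^2+|v_1-v_2|^2$ to obtain two terms $[\Phi]^{\gamma/2}|v_1|^{(2-p)\gamma/2}$ and $[\Phi]^{\gamma/2}|v_1-v_2|^{(2-p)\gamma/2}$, then applies Young's inequality twice: once (with $\vartheta=p/2$) to absorb the $|v_1-v_2|^\gamma$ back into the left-hand side, and once more at the integral level (after H\"older with exponents $2/p$ and $2/(2-p)$) to produce the $\varepsilon$-term, which is where the factor $\varepsilon^{1-2/p}$ appears. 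The final $L^{s,\infty}\hookrightarrow L^{\gamma/p}$ embedding on a finite-measure set is common to both proofs. Your decomposition $B=B^\flat\cup B^\sharp$ replaces the two Young inequalities by a single geometric dichotomy and reaches the pointwise bound $|v_1-v_2|^\gamma \lesssim t^{-(2-p)\gamma/p}\Phi^{\gamma/p}$ on $B^\sharp$ directly; this is slightly more elementary (no absorption argument) and makes the $\varepsilon$-exponent transparent. The paper's approach, on the other hand, yields the intermediate inequality~\eqref{est:Y-I}, which it reuses later in the proof of Lemma~\ref{lem:B1}; your decomposition would not give that byproduct.
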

\begin{proof}
According to the definition of $\Phi$ in~\eqref{def:Phi}, it allows us to get
\begin{align}\nonumber
|v_1 - v_2|^{\gamma} & =   [\Phi(v_1,v_2)]^{\frac{\gamma}{2}} \left(|v_1|^2 + |v_2|^2 \right)^{\frac{(2-p)\gamma}{4}} \\ \nonumber
& \le 2^{\frac{(2-p)\gamma}{4}}  [\Phi(v_1,v_2)]^{\frac{\gamma}{2}} \left(|v_1|^2 + |v_1-v_2|^2 \right)^{\frac{(2-p)\gamma}{4}} \\ \label{est:Y-I}
& \le 2^{\frac{(2-p)\gamma}{2}}  [\Phi(v_1,v_2)]^{\frac{\gamma}{2}} |v_1|^{\frac{(2-p)\gamma}{2}} + 2^{\frac{(2-p)\gamma}{2}}  [\Phi(v_1,v_2)]^{\frac{\gamma}{2}} |v_1-v_2|^{\frac{(2-p)\gamma}{2}}.
\end{align}
To estimate the last term on the right-hand side of~\eqref{est:Y-I}, we apply the Young's inequality for non-negative numbers $a$, $b$ and $\vartheta \in (0,1)$ as follows
\begin{align}\label{Young-inq}
 a^{\vartheta} b^{1-\vartheta} = \left(\varepsilon^{1-\frac{1}{\vartheta}} a\right)^{\vartheta} (\varepsilon b)^{1 - \vartheta}  \le \varepsilon^{1-\frac{1}{\vartheta}} a + \varepsilon b,
\end{align}
for every $\varepsilon>0$. To be more precise, let us choose $\varepsilon = \frac{1}{2}$, $\vartheta = \frac{p}{2} \in (\frac{1}{2},1)$, $a = 2^{\frac{(2-p)\gamma}{p}} [\Phi(v_1,v_2)]^{\frac{\gamma}{p}}$ and $b = |v_1-v_2|^{\gamma}$ in \eqref{Young-inq}, then it yields
\begin{align*}
2^{\frac{(2-p)\gamma}{2}} [\Phi(v_1,v_2)]^{\frac{\gamma}{2}} |v_1-v_2|^{\frac{(2-p)\gamma}{2}}  \le C [\Phi(v_1,v_2)]^{\frac{\gamma}{p}} + \frac{1}{2}  |v_1-v_2|^{\gamma}.
\end{align*}
Making use of this inequality, it follows from~\eqref{est:Y-I} that
\begin{align*}
\fint_{B} |v_1 - v_2|^{\gamma} dx & \le C \fint_{B}  [\Phi(v_1,v_2)]^{\frac{\gamma}{2}} |v_1|^{\frac{(2-p)\gamma}{2}} dx  + C \fint_{B}  [\Phi(v_1,v_2)]^{\frac{\gamma}{p}}dx \\
& \le C  \left(\fint_{B} [\Phi(v_1,v_2)]^{\frac{\gamma}{p}} dx\right)^{\frac{p}{2}} \left(\fint_{B} |v_1|^{\gamma} dx\right)^{1 - \frac{p}{2}} + C \fint_{B}  [\Phi(v_1,v_2)]^{\frac{\gamma}{p}}dx.
\end{align*}
Moreover, for any $s > \frac{\gamma}{p}$, applying H{\"o}lder's inequality 
\begin{align*}
\fint_{B}  [\Phi(v_1,v_2)]^{\frac{\gamma}{p}}dx \le \frac{s}{s-\frac{\gamma}{p}} [\mathcal{L}^n(B)]^{-\frac{\gamma}{ps}} \|\Phi(v_1,v_2)\|_{L^{s,\infty}(B)}^{\frac{\gamma}{p}},
\end{align*}
it completes that proof.
\end{proof}

\begin{lemma}\label{lem:Tech3}
Let $B \subset \Omega$ be an open set and two measurable functions $u,v$ such that $u-v \in L^{q,\infty}(B)$ for some $q>0$ . Assume that the function $f: \ \mathbb{R} \times \mathbb{R} \to \mathbb{R}^+$ satisfies
\begin{align}\label{cond:Tech3}
\int_{B \cap \left\{|u-v| \le k \right\}} f(u,v) dx \le \Pi k^{\vartheta},
\end{align}
for all $k >0$ with some constants $\Pi>0$, $\vartheta \ge 0$. Then there exists a constant $C = C(n,q,\vartheta)>0$ such that
\begin{align}\label{est:Tech3}
\|f(u,v)\|_{L^{\frac{q}{q+\vartheta},\infty}(B)} \le C \Pi \|u-v\|_{L^{q,\infty}(B)}^{\vartheta}.
\end{align}
\end{lemma}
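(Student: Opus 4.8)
The plan is to estimate the level set $\{f(u,v) > \lambda\}$ directly for an arbitrary $\lambda > 0$, splitting it according to the size of $|u-v|$. Fix $\lambda > 0$ and a threshold $k > 0$ to be optimized at the end. I would write
\begin{align*}
\{x \in B : f(u,v)(x) > \lambda\} \subset \big(\{f(u,v) > \lambda\} \cap \{|u-v| \le k\}\big) \cup \{|u-v| > k\},
\end{align*}
so that
\begin{align*}
\mathcal{L}^n(\{f(u,v) > \lambda\}) \le \mathcal{L}^n\big(\{f(u,v) > \lambda\} \cap \{|u-v| \le k\}\big) + \mathcal{L}^n(\{|u-v| > k\}).
\end{align*}
For the first term, Chebyshev's inequality together with the structural hypothesis~\eqref{cond:Tech3} gives
\begin{align*}
\mathcal{L}^n\big(\{f(u,v) > \lambda\} \cap \{|u-v| \le k\}\big) \le \frac{1}{\lambda}\int_{B \cap \{|u-v| \le k\}} f(u,v)\,dx \le \frac{\Pi k^{\vartheta}}{\lambda}.
\end{align*}
For the second term, the definition of the Lorentz quasi-norm $L^{q,\infty}$ yields $\mathcal{L}^n(\{|u-v| > k\}) \le k^{-q}\|u-v\|_{L^{q,\infty}(B)}^{q}$. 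Combining,
\begin{align*}
\mathcal{L}^n(\{f(u,v) > \lambda\}) \le \frac{\Pi k^{\vartheta}}{\lambda} + \frac{\|u-v\|_{L^{q,\infty}(B)}^{q}}{k^{q}}.
\end{align*}

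Now I would optimize over $k > 0$. The right-hand side is of the form $A k^{\vartheta} + B k^{-q}$ with $A = \Pi/\lambda$ and $B = \|u-v\|_{L^{q,\infty}(B)}^{q}$; its minimum over $k>0$ is comparable (up to a constant depending only on $q$ and $\vartheta$) to $A^{q/(q+\vartheta)} B^{\vartheta/(q+\vartheta)}$, attained at $k \sim (B/A)^{1/(q+\vartheta)}$. This gives
\begin{align*}
\mathcal{L}^n(\{f(u,v) > \lambda\}) \le C(q,\vartheta)\,\left(\frac{\Pi}{\lambda}\right)^{\frac{q}{q+\vartheta}} \|u-v\|_{L^{q,\infty}(B)}^{\frac{q\vartheta}{q+\vartheta}}.
\end{align*}
Multiplying both sides by $\lambda^{q/(q+\vartheta)}$ and taking the supremum over $\lambda > 0$, the left-hand side becomes exactly $\|f(u,v)\|_{L^{q/(q+\vartheta),\infty}(B)}^{q/(q+\vartheta)}$ by Definition~\ref{def:Lorentz}, and raising to the power $(q+\vartheta)/q$ produces the claimed bound~\eqref{est:Tech3} with a constant $C = C(q,\vartheta)$ (the dependence on $n$ recorded in the statement is harmless).

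The argument is essentially routine; the only points requiring a little care are the degenerate cases. If $\vartheta = 0$ the hypothesis just says $f(u,v) \in L^1$ with a uniform bound $\Pi$, and the conclusion reduces to a statement about $L^{1,\infty}$, which the same splitting handles (indeed one may let $k \to \infty$). If either $\Pi = 0$ or $\|u-v\|_{L^{q,\infty}(B)} = 0$ or $+\infty$ the inequality is trivial or vacuous, so I would dispose of these first and then assume both quantities lie in $(0,\infty)$ to legitimize the choice of $k$. I do not anticipate a genuine obstacle here; the lemma is a clean interpolation-type estimate and the main work is simply the $k$-optimization, which is the standard device for passing from a truncated integral bound to a weak-Lebesgue bound.
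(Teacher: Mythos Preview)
Your proof is correct and follows essentially the same route as the paper: split the level set of $f(u,v)$ according to whether $|u-v|\le k$ or $|u-v|>k$, bound the first piece via the hypothesis~\eqref{cond:Tech3} and the second via the weak-$L^q$ norm, then optimize in $k$ and take the supremum over $\lambda$. The paper reaches the same key inequality~\eqref{est:B-7} through a slightly more elaborate layer-cake/averaging formulation of the Chebyshev step, but the substance is identical.
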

\begin{proof}
For every $k$ and $\lambda>0$, let us consider the function
\begin{align*}
\Gamma(\lambda,k) = \mathcal{L}^n\left(B \cap \left\{f(u,v) > \lambda; \ |u-v|>k\right\}\right).
\end{align*}
It is readily verified that $\Gamma$ is non increasing with respect to the variable $\lambda$. Then,
\begin{align*}
\Gamma(\lambda,0) \le \frac{1}{\lambda} \int_0^{\lambda} \Gamma(t,0)dt \le \Gamma(0,k) + \frac{1}{\lambda} \int_0^{\lambda} \left(\Gamma(t,0) - \Gamma(t,k)\right) dt,
\end{align*}
under assumption~\eqref{cond:Tech3}, can be rewritten as
\begin{align}\nonumber
\mathcal{L}^n(B \cap \{f(u,v) > \lambda \}) & \le \mathcal{L}^n(B \cap \{|u-v|>k\}) + \frac{1}{\lambda} \int_{B \cap \{|u-v| \le k\}} f(u,v) dx \\ \label{est:B-7}
& \le C(n,q) k^{-q} \|u-v\|_{L^{q,\infty}(B)}^q + \frac{\Pi k^{\vartheta}}{\lambda}.
\end{align}
With a particular choice of $k$ as
\begin{align*}
k^{-q} \|u-v\|_{L^{q,\infty}(B)}^q = \frac{\Pi k^{\vartheta}}{\lambda} \  \Leftrightarrow  \ k = \left(\lambda \Pi^{-1} \|u-v\|_{L^{q,\infty}(B)}^q\right)^{\frac{1}{q+\vartheta}},
\end{align*}
we obtain from~\eqref{est:B-7} that
\begin{align}\label{est:B-8}
\lambda \mathcal{L}^n(B \cap \{f(u,v) > \lambda \})^{\frac{q+\vartheta}{q}} \le C  \Pi \|u-v\|_{L^{q,\infty}(B)}^{\vartheta}.
\end{align}
Taking the supremum both sides of~\eqref{est:B-8} for all $\lambda$ belonging to $(0,\infty)$, we plainly obtain~\eqref{est:Tech3}.
\end{proof}

With assumption \eqref{range-p} specified in this study, for the reader's convenience, let us introduce a new parameter
\begin{align}\label{p-tilde}
\tilde{p} = \min\left\{\frac{n}{2(n-1)}; \ \frac{(p-1)n}{n-p}\right\},
\end{align}
that is necessary for our proofs in this section.

\begin{lemma}\label{lem:B1}
Let $1<p\le 2-\frac{1}{n}$ and $B$ be a unit ball of $\Omega$. Assume that $u \in W^{1,p}(\Omega)$ and $v \in u + W_0^{1,p}(B)$ satisfying
\begin{align}\label{cond:lem-B1}
\int_{B \cap E_{k,h}} \Phi(\nabla u, \nabla v) dx \le \Pi k,
\end{align}
for some $\Pi>0$ and for every $k,h >0$, where the set $E_{k,h}$ defined by
\begin{align}\label{set-E}
E_{k,h} := \left\{x \in \Omega: \ h < |u - v| < k+h \right\}.
\end{align}
Then there exists $C = C(p,n)>0$ such that 
\begin{align}\label{est:lem-B1}
\|u-v\|_{L^{\tilde{p},\infty}(B)} & \le C  \Pi^{\frac{1}{p-1}} + C \Pi \int_{B} |\nabla u|^{2-p}dx.
\end{align}
Moreover, for every $\gamma \in \left(\gamma_{1}, \gamma_{2}\right)$ and $\kappa_1, \kappa_2 \in (0,1)$, there exists $C = C(p,n,\gamma,\kappa_1,\kappa_2)>0$ such that
\begin{align}\label{est-lem:u-v-B}
\fint_{B_{\varrho}} |\nabla u - \nabla v|^{\gamma} dx  & \le \kappa_1 \chi_1  \fint_{B_{\varrho}} |\nabla u|^{\gamma} dx  + \kappa_2 \chi_2\left(\fint_{B_{\varrho}} |\nabla u|^{2-p}dx\right)^{\frac{\gamma}{2-p}} + C \Pi^{\frac{\gamma}{p-1}},
\end{align} 
where $\chi_1, \chi_2$ are define as in~\eqref{def-chi-1}-\eqref{def-chi-2} and $\gamma_{1}, \gamma_{2}$ are given by~\eqref{p-star}.
\end{lemma}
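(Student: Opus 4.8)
The plan is to prove \eqref{est:lem-B1} first, by adapting the truncation technique of Benilan \emph{et al.}\ \cite[Lemmas 4.1--4.2]{Benilan1995} to the annular sets $E_{k,h}$, and then to deduce \eqref{est-lem:u-v-B} from \eqref{est:lem-B1} together with Lemmas~\ref{lem:Tech3} and~\ref{lem:Tech2}. Write $w:=u-v\in W_0^{1,p}(B)$, $\mathcal{I}:=\int_B|\nabla u|^{2-p}\,dx$ and $\mu(t):=\mathcal{L}^n(\{|w|>t\}\cap B)$. For $h,k>0$ the truncation $\phi_{h,k}(w):=\mathrm{sign}(w)\min\{k,(|w|-h)_+\}$ belongs to $W_0^{1,p}(B)$, has gradient $\nabla w\,\chi_{E_{k,h}}$ and satisfies $|\phi_{h,k}(w)|\ge k\,\chi_{\{|w|\ge h+k\}}$, so the Sobolev embedding $W_0^{1,1}(B)\hookrightarrow L^{\frac{n}{n-1}}(B)$ on the unit ball yields $k\,\mu(h+k)^{\frac{n-1}{n}}\le C\int_{B\cap E_{k,h}}|\nabla w|\,dx$. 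To bound the right-hand side I will use \eqref{def:Phi} to write $|\nabla w|=\Phi(\nabla u,\nabla v)^{1/2}\big(|\nabla u|^2+|\nabla v|^2\big)^{\frac{2-p}{4}}$, then the subadditivity of $t\mapsto t^{2-p}$ (valid since $0<2-p<1$) in the form $\big(|\nabla u|^2+|\nabla v|^2\big)^{\frac{2-p}{2}}\le 2|\nabla u|^{2-p}+|\nabla w|^{2-p}$, then Cauchy--Schwarz together with \eqref{cond:lem-B1}, and finally the Hölder estimate $\int_{B\cap E_{k,h}}|\nabla w|^{2-p}\,dx\le\big(\int_{B\cap E_{k,h}}|\nabla w|\,dx\big)^{2-p}\mathcal{L}^n(B\cap E_{k,h})^{p-1}$ followed by a short Young-type dichotomy; since $\mathcal{L}^n(B\cap E_{k,h})\le\mu(h)$, this should produce, for all $h,k>0$,
\begin{align*}
k\,\mu(h+k)^{\frac{n-1}{n}}\ \le\ C(\Pi k)^{1/2}\mathcal{I}^{1/2}+C(\Pi k)^{1/p}\mu(h)^{\frac{p-1}{p}}.
\end{align*}

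Taking $h=k$ and raising to the power $\frac{n}{n-1}$, this becomes a nonlinear recursion for the dyadic values $\mu(2^{j}\tau)$. In the regime where the first term on the right dominates one gets $\mu(t)\le C(\Pi\mathcal{I})^{q_1}t^{-q_1}$ with $q_1=\frac{n}{2(n-1)}$, i.e.\ $\|w\|_{L^{q_1,\infty}(B)}\le C\,\Pi\,\mathcal{I}$; in the regime where the second term dominates one is left with $\mu(2k)\le C\,\Pi^{\frac{\theta}{p-1}}k^{-\theta}\mu(k)^{\theta}$, $\theta=\frac{(p-1)n}{p(n-1)}\in(0,1)$, which iterates to $\mu(t)\le C\,\Pi^{\frac{q_2}{p-1}}t^{-q_2}$ with $q_2=\frac{\theta}{1-\theta}=\frac{(p-1)n}{n-p}$, i.e.\ $\|w\|_{L^{q_2,\infty}(B)}\le C\,\Pi^{\frac{1}{p-1}}$. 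Since $\tilde p=\min\{q_1,q_2\}$ and $\mathcal{L}^n(B)<\infty$, both estimates embed into $L^{\tilde p,\infty}(B)$, giving \eqref{est:lem-B1}. This iteration is the technical heart of the lemma: one must keep track of the correct powers of $\Pi$ and $\mathcal{I}$ at every step and, so that no spurious additive constant survive, handle the range of small $t$ by means of the trivial bound $\mu(t)\le\mathcal{L}^n(B)$, which combined with the decay in $t$ is absorbed into the $\Pi$-powers because $\mathcal{L}^n(B)$ is a dimensional constant.

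To prove \eqref{est-lem:u-v-B}, first let $h\downarrow 0$ in \eqref{cond:lem-B1}: since $\Phi(\nabla u,\nabla v)=0$ a.e.\ on $\{u=v\}$, dominated convergence gives $\int_{B\cap\{|u-v|\le k\}}\Phi(\nabla u,\nabla v)\,dx\le\Pi k$ for every $k>0$. Applying Lemma~\ref{lem:Tech3} with $\vartheta=1$, $q=\tilde p$ and the nonnegative function $\Phi(\nabla u,\nabla v)$, and then inserting \eqref{est:lem-B1}, yields
\begin{align*}
\big\|\Phi(\nabla u,\nabla v)\big\|_{L^{s,\infty}(B)}\ \le\ C\,\Pi\,\|u-v\|_{L^{\tilde p,\infty}(B)}\ \le\ C\,\Pi^{\frac{p}{p-1}}+C\,\Pi^{2}\mathcal{I},\qquad s:=\frac{\tilde p}{\tilde p+1}.
\end{align*}
A direct computation, treating the cases $\tilde p=q_1$ and $\tilde p=q_2$ separately, shows $ps=\frac{p\tilde p}{\tilde p+1}=\gamma_{2}$, so $\gamma\in(\gamma_1,\gamma_2)\subset(0,ps)$ and Lemma~\ref{lem:Tech2} applies with $v_1=\nabla u$, $v_2=\nabla v$ (note $\nabla u,\nabla v\in L^{p}(B)\subset L^{\gamma}(B)$ and $\mathcal{L}^n(B)$ is a dimensional constant). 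Inserting the bound above and using $\mathcal{I}\le C\fint_B|\nabla u|^{2-p}\,dx$ together with $\gamma/p<1$ gives, for every $\varepsilon\in(0,1)$,
\begin{align*}
\fint_{B}|\nabla u-\nabla v|^{\gamma}\,dx\ \le\ \varepsilon\fint_{B}|\nabla u|^{\gamma}\,dx+C\varepsilon^{1-\frac{2}{p}}\Pi^{\frac{\gamma}{p-1}}+C\varepsilon^{1-\frac{2}{p}}\Pi^{\frac{2\gamma}{p}}\Big(\fint_{B}|\nabla u|^{2-p}\,dx\Big)^{\frac{\gamma}{p}}.
\end{align*}
Young's inequality with the conjugate exponents $\frac{p}{2-p}$ and $\frac{p}{2(p-1)}$ converts the last term into $\kappa\big(\fint_B|\nabla u|^{2-p}\big)^{\frac{\gamma}{2-p}}+C_\kappa\Pi^{\frac{\gamma}{p-1}}$ for any $\kappa\in(0,1)$. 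Finally I distinguish the two cases of \eqref{def-chi-1}--\eqref{def-chi-2}. If $\chi_1=1$ then $\gamma>\gamma_1=2-p$, so Jensen gives $\big(\fint_B|\nabla u|^{2-p}\big)^{\frac{\gamma}{2-p}}\le\fint_B|\nabla u|^{\gamma}$; choosing $\kappa=\varepsilon$ and then $\varepsilon$ so small that $\varepsilon+C\varepsilon^{2-\frac{2}{p}}\le\kappa_1$ gives the claim. If $\chi_2=1$ then $\gamma<\gamma_2\le 2-p$, so Jensen gives $\fint_B|\nabla u|^{\gamma}\le\big(\fint_B|\nabla u|^{2-p}\big)^{\frac{\gamma}{2-p}}$; choosing $\varepsilon$ and then $\kappa$ so small that $\varepsilon+C\varepsilon^{1-\frac{2}{p}}\kappa\le\kappa_2$ gives the claim. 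In both cases the surviving constant depends only on $p,n,\gamma,\kappa_1,\kappa_2$ (recall $\tilde p=\tilde p(n,p)$), completing the proof. The main obstacle is the iteration in the proof of \eqref{est:lem-B1}; everything thereafter is a routine combination of Lemmas~\ref{lem:Tech2} and~\ref{lem:Tech3} with Young's and Jensen's inequalities.
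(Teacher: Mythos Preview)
Your proposal is essentially correct and, apart from one step, follows the same route as the paper. The derivation of the key recursive inequality
\[
k\,\mu(h+k)^{\frac{n-1}{n}}\ \le\ C(\Pi k)^{1/2}\mathcal{I}^{1/2}+C(\Pi k)^{1/p}\mu(h)^{\frac{p-1}{p}}
\]
via the Sobolev embedding on the truncation $\phi_{h,k}(w)$, the pointwise splitting of $|\nabla w|$ in terms of $\Phi^{1/2}$, and H\"older/Young, is exactly what the paper does (its (3.16)--(3.18)). Likewise, your deduction of \eqref{est-lem:u-v-B} from \eqref{est:lem-B1} --- passing $h\downarrow 0$ to feed Lemma~\ref{lem:Tech3}, then Lemma~\ref{lem:Tech2} with $s=\tilde p/(\tilde p+1)$ so that $ps=\gamma_2$, then Young with exponents $\tfrac{p}{2-p},\tfrac{p}{2(p-1)}$, and finally the Jensen/H\"older dichotomy according to $\chi_1,\chi_2$ --- is precisely the paper's argument.

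The one genuine difference is how you pass from the recursive inequality to \eqref{est:lem-B1}. The paper does \emph{not} iterate dyadically: instead it introduces an auxiliary parameter $\nu\ge 0$, multiplies by the harmless factor $\mu(k+h)^{\nu}$ (bounded on the unit ball), fixes the ratio $k/h$, and takes the supremum in $h$; the point of $\nu$ is to make the weak-Lebesgue exponents on the two sides coincide (this is the choice that produces $\tilde p$), after which a single application of Young's inequality closes the estimate. Your ``regime dichotomy'' is heuristically right --- the two limiting exponents you identify are exactly the paper's $q_1=\tfrac{n}{2(n-1)}$ and $q_2=\tfrac{(p-1)n}{n-p}$ with $\tilde p=\min\{q_1,q_2\}$ --- but as written it is incomplete: which of the two terms dominates can change with $k$, so the two ``pure'' iterations do not run separately, and the combined recursion $\mu(2k)\le C(\Pi\mathcal I)^{q_1}k^{-q_1}+C\Pi^{\alpha}k^{-\theta}\mu(k)^{\theta}$ must be handled with some care to avoid picking up a spurious additive constant (your remark about absorbing $\mathcal L^n(B)$ into the $\Pi$-powers does not work when $\Pi$ is small). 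This can be repaired, but the paper's $\nu$-trick avoids the bookkeeping entirely and is the cleaner route.
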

\begin{proof}
For every $h >0$, let us set
\begin{align} \notag 
 F_{h} := \left\{x \in \Omega: \ |u - v| > h \right\}.
\end{align}
For all $k$ and $h>0$, it reveals the relationship between $E_{k,h}$, $F_{h}$ and $F_{k+h}$ that
\begin{align}\label{rel-EF}
E_{k,h} \subset  F_{h}, \quad F_{k+h} \subset F_{h} \quad \mbox{and} \quad E_{k,h} \cup F_{k+h} \subset F_{h}.
\end{align}
Moreover, one may check the following equality
\begin{align*}
\{|u-v|\ge k + h\} = \{|T_{k+h}(u-v)| \ge k + h\},
\end{align*}
which ensures that
\begin{align}\label{est-2:a}
(k+h) [\mathcal{L}^n(B \cap F_{k+h})]^{\frac{n-1}{n}} & \le  C \left[\int_{B \cap F_{k+h}} (T_{k+h}(u - v))^{\frac{n}{n-1}}dx\right]^{\frac{n-1}{n}}.
\end{align}
Due to the fact
\begin{align*}
|T_{k+h}(u-v)| \le \left(1+\frac{h}{k}\right) |T_{k+h}(u-v)-T_h(u-v)| \ \mbox{ in } F_h,
\end{align*}
and we combine with~\eqref{rel-EF}, \eqref{est-2:a} to discover that
\begin{align*}
(k+h) [\mathcal{L}^n(B \cap F_{k+h})]^{\frac{n-1}{n}} & \le  C \left[\int_{B \cap F_{h}} |T_{k+h}(u - v)- T_{h}(u-v)|^{\frac{n}{n-1}}dx\right]^{\frac{n-1}{n}} \notag \\
& \le C \left[\int_{B} |T_{k+h}(u - v) - T_{h}(u-v)|^{\frac{n}{n-1}}dx\right]^{\frac{n-1}{n}}.
\end{align*}
At this stage, it allows us to apply Sobolev's inequality to get
\begin{align}\label{est-2:lem-B1}
(k+h) [\mathcal{L}^n(B \cap F_{k+h})]^{\frac{n-1}{n}} &  \le C \int_{B \cap E_{k,h}} |\nabla u - \nabla v| dx.
\end{align}
Here, it is worth noticing that the constant $C$ in~\eqref{est-2:lem-B1} depends only on the ratio $\displaystyle{\frac{h}{k}}$. As an consequence of~\eqref{est:Y-I} in Lemma~\ref{lem:Tech2} with $\gamma = 1$, one finds
\begin{align*}
|\nabla u - \nabla v| \le C |\nabla u|^{\frac{2-p}{2}} [\Phi(\nabla u, \nabla v)]^{\frac{1}{2}} + C [\Phi(\nabla u, \nabla v)]^{\frac{1}{p}},
\end{align*}
which implies the following estimate 
\begin{align}\nonumber
(k+h) [\mathcal{L}^n(B \cap F_{k+h})]^{\frac{n-1}{n}} & \le  C \int_{B \cap E_{k,h}} |\nabla u|^{\frac{2-p}{2}} [\Phi(\nabla u, \nabla v)]^{\frac{1}{2}} dx \\ \label{est-3:lem-B1}
& \hspace{2cm} + C \int_{B \cap E_{k,h}} [\Phi(\nabla u, \nabla v)]^{\frac{1}{p}} dx.
\end{align}
Thanks to H{\"o}lder's inequality, we deduce from~\eqref{est-3:lem-B1} that 
\begin{align}\nonumber 
(k+h) [\mathcal{L}^n(B \cap F_{k+h})]^{\frac{n-1}{n}} & \le C \left(\int_{B} |\nabla u|^{2-p}dx\right)^{\frac{1}{2}} \left(\int_{B \cap E_{k,h}} \Phi(\nabla u, \nabla v) dx\right)^{\frac{1}{2}}  \\ \nonumber
& \hspace{1cm}  + C [\mathcal{L}^n(B \cap E_{k,h})]^{1 - \frac{1}{p}} \left(\int_{B \cap E_{k,h}} \Phi(\nabla u, \nabla v) dx\right)^{\frac{1}{p}},
\end{align}
and with assumption~\eqref{cond:lem-B1} in hand, it leads to
\begin{align}\label{est-4:lem-B1}
(k+h) [\mathcal{L}^n(B \cap F_{k+h})]^{\frac{n-1}{n}} & \le C (\Pi k)^{\frac{1}{2}} \left(\int_{B} |\nabla u|^{2-p}dx\right)^{\frac{1}{2}}  \notag \\
& \hspace{3cm} + C (\Pi k)^{\frac{1}{p}} [\mathcal{L}^n(B \cap E_{k,h})]^{1 - \frac{1}{p}}.
\end{align}
For all $\nu \ge 0$, there exists $m_{\nu}$ large enough such that $[\mathcal{L}^n(B\cap F_{k+h})]^{\nu} \le 1$ with $k = m_{\nu}h$. Moreover, it enables us to reuse~\eqref{rel-EF} to infer from~\eqref{est-4:lem-B1} that
\begin{align} \label{est-5a:lem-B1}
& \left((k+h) [\mathcal{L}^n(B \cap F_{k+h})]^{2\left(\frac{n-1}{n}+\nu\right)}\right)^{\frac{1}{2}}  \le  C \Pi^{\frac{1}{2}} \left(\int_{B} |\nabla u|^{2-p}dx\right)^{\frac{1}{2}}  \notag \\ 
& \hspace{6.5cm}  + C \Pi^{\frac{1}{p}} \left(m_{\nu}h[\mathcal{L}^n(B \cap F_{h})]^{\frac{2(p - 1 + \nu p)}{2-p}}\right)^{\frac{2-p}{2p}}.
\end{align}
Taking supremum for all $h$ belonging to $(0,\infty)$ both sides of~\eqref{est-5a:lem-B1}, one obtains
\begin{align}\label{est-5d:lem-B1}
& \|u-v\|_{L^{\frac{1}{2\left(\frac{n-1}{n}+\nu\right)},\infty}(B)}^{\frac{1}{2}}  \le  C \Pi^{\frac{1}{2}} \left(\int_{B} |\nabla u|^{2-p}dx\right)^{\frac{1}{2}}    + C \Pi^{\frac{1}{p}} \|u-v\|_{L^{\frac{2-p}{2(p - 1 + \nu p)},\infty}(B)}^{\frac{2-p}{2p}},
\end{align}
To obtain the same quasi-norm of Marcinkiewicz spaces in~\eqref{est-5d:lem-B1}, it allows us to choose $\nu$ satisfying
\begin{align}\label{cond-del}
\tilde{p} = \frac{1}{2\left(\frac{n-1}{n}+\nu\right)}  \ge \frac{2-p}{2(p - 1 + \nu p)},
\end{align}
where $\tilde{p}$ is defined as in~\eqref{p-tilde}. For this purpose, it is possible to choose
\begin{align*}
\nu = \begin{cases} \frac{3n-2 - p(2n-1)}{2n(p-1)}, &  \mbox{if } \ 1< p \le \frac{3n-2}{2n-1},\\ 0, & \mbox{if } \ \frac{3n-2}{2n-1} < p \le 2-\frac{1}{n}, \end{cases}
\end{align*}
which ensures the validity of~\eqref{cond-del}. With this choice of $\nu$, the inequality~\eqref{est-5d:lem-B1} yields that
\begin{align*}
\|u-v\|_{L^{\tilde{p},\infty}(B)}^{\frac{1}{2}} & \le  C \Pi^{\frac{1}{2}} \left(\int_{B} |\nabla u|^{2-p}dx\right)^{\frac{1}{2}}  + C \Pi^{\frac{1}{p}} \|u-v\|_{L^{\tilde{p},\infty}(B)}^{\frac{2-p}{2p}},
\end{align*}
which guarantees~\eqref{est:lem-B1} by taking Young's inequality into account.

Applying Lemma~\ref{lem:Tech3} with $f(u,v) = \Phi(\nabla u, \nabla v)$ and combining with estimate~\eqref{est:lem-B1}, it asserts 
\begin{align}\label{est:B-4-a}
\|\Phi(\nabla u, \nabla v)\|_{L^{\frac{\tilde{p}}{\tilde{p}+1},\infty}(B)} & \le C \Pi \|u-v\|_{L^{\tilde{p},\infty}(B)}  \le C \Pi \left(\Pi \int_{B} |\nabla u|^{2-p}dx +   \Pi^{\frac{1}{p-1}}\right).
\end{align}
It notices that $\gamma_{2} = \frac{p\tilde{p}}{\tilde{p}+1}$ from the combination of ~\eqref{p-tilde} and~\eqref{p-star}. Then, for every $0< \gamma < \gamma_{2}$ and $\kappa_1 \in (0,1)$, we can also apply Lemma~\ref{lem:Tech2} with $s = \frac{\tilde{p}}{\tilde{p}+1}$ to obtain
\begin{align}\label{est:B-5-a}
\fint_{B} |\nabla u - \nabla v|^{\gamma}dx \le \kappa_1 \fint_{B} |\nabla u|^{\gamma} dx + C  \|\Phi(\nabla u,\nabla v)\|_{L^{\frac{\tilde{p}}{\tilde{p}+1},\infty}(B)}^{\frac{\gamma}{p}}.
\end{align}
Substituting~\eqref{est:B-4-a} into~\eqref{est:B-5-a}, we have
\begin{align}\label{est-lem:u-v-B-2a}
 \fint_{B}  |\nabla u - \nabla v|^{\gamma}dx  & \le \kappa_1 \fint_{B} |\nabla u|^{\gamma} dx  + C  \left[\Pi^{\frac{2\gamma}{p}}\left(\fint_{B} |\nabla u|^{2-p}dx\right)^{\frac{\gamma}{p}} +  \Pi^{\frac{\gamma}{p-1}}\right].
\end{align}
It follows from Remark~\ref{rmk:1} that if $1<p\le \frac{3n-2}{2n-1}$ then $\gamma_{2} \le 2-p$. Thus, for every $\gamma \in (\gamma_{1}, \gamma_{2})$, thanks to H{\"o}lder's inequality, we infer that
\begin{align*}
\left(\fint_{B} |\nabla u|^{\gamma} dx \right)^{\frac{1}{\gamma}} \le \left(\fint_{B} |\nabla u|^{2-p} dx\right)^{\frac{1}{2-p}}.
\end{align*}
Finally, taking the comparison estimate in~\eqref{est-lem:u-v-B-2a} and Young's inequality for all $\kappa_2 \in (0,1)$ into account, it yields that 
\begin{align}\label{est-lem:u-v-B-2}
\fint_{B} |\nabla u - \nabla v|^{\gamma} dx  & \le \kappa_2 \left(\fint_{B} |\nabla u|^{2-p}dx\right)^{\frac{\gamma}{2-p}} + C \Pi^{\frac{\gamma}{p-1}}.
\end{align} 
Otherwise, when $\frac{3n-2}{2n-1}<p \le 2 - \frac{1}{n}$, it results $\gamma_{1} < \gamma_{2} < \frac{n(p-1)}{n-1}$ (see Remark~\ref{rmk:1}) and therefore, for every $\gamma \in (\gamma_{1}, \gamma_{2})$, the comparison estimate~\eqref{est-lem:u-v-B-2a} will be reduced to
\begin{align}\label{est-lem:u-v-B-3}
\fint_{B} |\nabla u - \nabla v|^{\gamma} dx & \le \kappa_1  \fint_{B} |\nabla u|^{\gamma}dx + C \Pi^{\frac{\gamma}{p-1}}.
\end{align}
Combining between~\eqref{est-lem:u-v-B-2} and~\eqref{est-lem:u-v-B-3} in two possible ranges of growth exponent $p$, the proof is complete.
\end{proof}

\subsection{Comparison to homogeneous problems}\label{sec:comparison}
Let us recall in the following lemma a basic property of the obstacle problem with frozen coefficients. Its proof is simple and can be found in \cite[Lemma 2.1]{Scheven2012}.
\begin{lemma}\label{lem:obs} 
Let $B$ be an open ball in $\Omega$ and $v \in \mathcal{T}^{1,p}(B)$ with $v \ge \psi$ on $\partial B$ be a weak solution to the following obstacle problem
\begin{align}\label{obs-frozen}
\int_{B} \langle \mathbb{A}(\nabla v,x), \nabla \varphi - \nabla v \rangle dx & \ge \int_{B} \langle \mathbb{A}(\nabla \psi,x), \nabla \varphi - \nabla v \rangle dx,
\end{align}
for every $\varphi \in \mathcal{T}^{1,p}_0(B)$. Then one has $v \ge \psi$ almost everywhere on $B$. 
\end{lemma}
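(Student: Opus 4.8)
The plan is the classical argument that a solution of an obstacle problem automatically satisfies its obstacle constraint, carried out inside the truncated class $\mathcal{T}^{1,p}$. The only idea is to test the variational inequality \eqref{obs-frozen} with the competitor $\varphi := v\vee\psi = \max\{v,\psi\}$. First I would verify that this $\varphi$ is admissible: it satisfies $\varphi\ge\psi$, and, since $v\ge\psi$ on $\partial B$ while $\psi\in W^{1,p}(\Omega)$, every truncation $T_k(\varphi-v)=T_k\big((\psi-v)^+\big)$ lies in $W^{1,p}(B)$ with vanishing trace on $\partial B$, so $\varphi\in v+\mathcal{T}_0^{1,p}(B)$; moreover $\nabla\varphi-\nabla v=(\nabla\psi-\nabla v)\,\chi_{B\cap\{v<\psi\}}$.

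Inserting this choice into \eqref{obs-frozen} and rearranging would give
\begin{align*}
\int_{B\cap\{v<\psi\}}\big\langle \mathbb{A}(\nabla v,x)-\mathbb{A}(\nabla\psi,x),\,\nabla v-\nabla\psi\big\rangle\,dx \le 0 .
\end{align*}
From the ellipticity half of \eqref{cond:A} — extended by continuity of $\mathbb{A}(\cdot,x)$ to the sets where one of the gradients vanishes — the integrand is bounded below by $\Upsilon^{-1}\Phi(\nabla v,\nabla\psi)\ge 0$, so I would conclude $\Phi(\nabla v,\nabla\psi)=0$ a.e.\ on $B\cap\{v<\psi\}$. Since the weight $(|\eta_1|^2+|\eta_2|^2)^{\frac{p-2}{2}}$ is finite and strictly positive for $(\eta_1,\eta_2)\ne(0,0)$ when $1<p<2$, this forces $\nabla v=\nabla\psi$ a.e.\ on $B\cap\{v<\psi\}$. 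Hence $\nabla(v-\psi)^-=-(\nabla v-\nabla\psi)\,\chi_{\{v<\psi\}}=0$ a.e.\ in $B$, so each $T_k\big((v-\psi)^-\big)$ is a bounded $W^{1,p}(B)$ function with zero gradient and zero trace on the connected ball $B$, hence identically $0$; letting $k\to\infty$ yields $(v-\psi)^-\equiv 0$, i.e.\ $v\ge\psi$ a.e.\ in $B$.

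\textbf{Main obstacle.} As the paper already remarks, the statement is easy and no genuine difficulty arises. The two points that deserve care are the verification that $v\vee\psi$ is a legitimate competitor within $\mathcal{T}^{1,p}$ (which is precisely why one argues through the truncations $T_k$, each lying in $W^{1,p}$ and making the integrals in \eqref{obs-frozen} meaningful), and the elementary but $p$-sensitive implication $\Phi(\nabla v,\nabla\psi)=0\Rightarrow\nabla v=\nabla\psi$, which relies on the weight appearing in $\Phi$ being singular at the origin yet strictly positive elsewhere when $p<2$.
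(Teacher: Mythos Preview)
Your argument is correct and is exactly the standard one: the paper does not supply its own proof but only cites \cite[Lemma~2.1]{Scheven2012}, where precisely this test with $\varphi=\max\{v,\psi\}$ followed by the monotonicity inequality \eqref{cond:A} is carried out. The only cosmetic point is that the admissible class in \eqref{obs-frozen} should be read as $\varphi\in v+\mathcal{T}^{1,p}_0(B)$ with $\varphi\ge\psi$ (as you implicitly do), not literally $\mathcal{T}^{1,p}_0(B)$.
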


\begin{lemma}\label{lem:comp1}
Let $1< p \le 2- \frac{1}{n}$, $\mu \in L^1(\Omega) \cap W^{-1,p'}(\Omega)$ and $u \in W^{1,p}(\Omega) \cap \mathcal{S}_0$ be a solution to obstacle problem $\mathcal{P}(\mathbb{A},\mu,\psi)$. Assume that $B_{\varrho}$ is an open ball in $\Omega$ with radius $\varrho>0$ and that $v \in u + W_0^{1,p}(B_{\varrho})$ is a weak solution to the following obstacle-free problem
\begin{align}\label{eq:test}
\begin{cases} -\mathrm{div}(\mathbb{A}(\nabla v,x)) & = -\mathrm{div}(\mathbb{A}(\nabla \psi,x)), \quad \mbox{ in } \ B_{\varrho}, \\
\hspace{1.3cm} v & = u, \hspace{2.8cm} \mbox{ on } \ \partial B_{\varrho}.\end{cases}
\end{align}
For every $\gamma \in (\gamma_{1},\gamma_{2})$ and $\kappa_1, \kappa_2 \in (0,1)$, there exists $C=C(\gamma,\kappa_1,\kappa_2,\mathtt{data})>0$ such that
\begin{align}\label{est:comp1}
\fint_{B_{\varrho}} |\nabla u - \nabla v|^{\gamma} dx & \le  \kappa_1 \chi_1 \fint_{B_{\varrho}} |\nabla u|^{\gamma} dx  + \kappa_2 \chi_2 \left(\fint_{B_{\varrho}}|\nabla u|^{2-p}dx\right)^{\frac{\gamma}{2-p}} \notag \\
 & \hspace{2cm} + C \left[\frac{|\mu|(B_{\varrho})}{\varrho^{n-1}} + \varrho\fint_{B_{\varrho}} \left|\mathrm{div}\left(\mathbb{A}(\nabla \psi,x)\right)\right|dx \right]^{\frac{\gamma}{p-1}},
\end{align} 
where $\chi_1, \chi_2$ are defined as in Lemma~\ref{lem:B1}.
\end{lemma}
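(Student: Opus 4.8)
The plan is to deduce \eqref{est:comp1} from Lemma~\ref{lem:B1} after rescaling to the unit ball; the only substantive step is to verify the energy-type hypothesis \eqref{cond:lem-B1} for the rescaled pair. Concretely, I will first establish
\begin{align*}
\int_{B_\varrho\cap E_{k,h}}\Phi(\nabla u,\nabla v)\,dx \;\le\; C\,k\Big(|\mu|(B_\varrho)+\int_{B_\varrho}|\mathrm{div}\mathbb{A}(\nabla\psi,\cdot)|\,dx\Big)\qquad\text{for all }k,h>0,
\end{align*}
with $E_{k,h}$ as in \eqref{set-E}. Observe that since $u\in W^{1,p}(\Omega)$ and $1<p\le 2-\frac{1}{n}<2$, one has $|\nabla u|\in L^{p}(B_\varrho)\subset L^{2-p}(B_\varrho)$, so once the displayed bound is available all hypotheses of Lemma~\ref{lem:B1} are in place.

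To prove this energy estimate I would fix $k,h>0$, set $w:=u-v\in W_0^{1,p}(B_\varrho)$ and use the two-parameter truncation
\begin{align*}
\varphi_{k,h}:=T_k\big(w-T_h(w)\big)\in W_0^{1,p}(B_\varrho)\cap L^\infty(B_\varrho),\qquad |\varphi_{k,h}|\le k,\qquad \nabla\varphi_{k,h}=(\nabla u-\nabla v)\,\chi_{E_{k,h}}.
\end{align*}
Testing the weak form of \eqref{eq:test} with $\varphi_{k,h}$ and integrating by parts — legitimate because $\mathrm{div}\mathbb{A}(\nabla\psi,\cdot)\in L^1(\Omega)$ by \eqref{cond-psi} and $\varphi_{k,h}\in W_0^{1,p}(B_\varrho)\cap L^\infty$ — gives $\int_{B_\varrho}\langle\mathbb{A}(\nabla v,x),\nabla\varphi_{k,h}\rangle\,dx=-\int_{B_\varrho}\varphi_{k,h}\,\mathrm{div}\mathbb{A}(\nabla\psi,x)\,dx$, whose modulus is at most $k\int_{B_\varrho}|\mathrm{div}\mathbb{A}(\nabla\psi,\cdot)|\,dx$. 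For the $u$-side, testing the variational inequality \eqref{eq:main} with $\varphi=u+\tau$ for nonnegative $\tau\in W_0^{1,p}(B_\varrho)$ (admissible since $u\ge\psi$) shows that $\lambda:=-\mathrm{div}\mathbb{A}(\nabla u,\cdot)-\mu$ is a nonnegative Radon measure on $B_\varrho$ concentrated on the contact set $\{u=\psi\}$; decomposing $\varphi_{k,h}=\varphi_{k,h}^+-\varphi_{k,h}^-$ into its nonnegative parts then yields $\int_{B_\varrho}\langle\mathbb{A}(\nabla u,x),\nabla\varphi_{k,h}\rangle\,dx=\int_{B_\varrho}\varphi_{k,h}\,d(\mu+\lambda)\le k\,|\mu|(B_\varrho)+k\,\lambda(B_\varrho)$.

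Subtracting the two relations and using the monotonicity in \eqref{cond:A}, namely $\langle\mathbb{A}(\nabla u,x)-\mathbb{A}(\nabla v,x),\nabla u-\nabla v\rangle\ge\Upsilon^{-1}\Phi(\nabla u,\nabla v)$ (which persists on $\{\nabla u=0\}\cup\{\nabla v=0\}$ by continuity of $\mathbb{A}(\cdot,x)$), one obtains
\begin{align*}
\int_{B_\varrho\cap E_{k,h}}\Phi(\nabla u,\nabla v)\,dx\;\le\;\Upsilon\,k\Big(|\mu|(B_\varrho)+\lambda(B_\varrho)+\int_{B_\varrho}|\mathrm{div}\mathbb{A}(\nabla\psi,\cdot)|\,dx\Big).
\end{align*}
It remains to control the reaction term: since $\lambda$ is carried by $\{u=\psi\}$, where $\nabla u=\nabla\psi$ almost everywhere and hence $\mathbb{A}(\nabla u,\cdot)=\mathbb{A}(\nabla\psi,\cdot)$ almost everywhere, the standard bound for the obstacle reaction (cf.\ Lemma~\ref{lem:obs} and \cite{Scheven2012}) yields $\lambda(B_\varrho)\le|\mu|(B_\varrho)+\int_{B_\varrho}|\mathrm{div}\mathbb{A}(\nabla\psi,\cdot)|\,dx$, which produces the energy estimate of the first paragraph.

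To finish, I would rescale to the unit ball: with $B_\varrho=B_\varrho(x_0)$ and $u_\varrho(y):=\varrho^{-1}u(x_0+\varrho y)$, $v_\varrho(y):=\varrho^{-1}v(x_0+\varrho y)$, $\psi_\varrho(y):=\varrho^{-1}\psi(x_0+\varrho y)$, $\mathbb{A}_\varrho(\eta,y):=\mathbb{A}(\eta,x_0+\varrho y)$, the operator $\mathbb{A}_\varrho$ still satisfies \eqref{cond:A}, the functions $u_\varrho$ and $v_\varrho\in u_\varrho+W_0^{1,p}(B_1)$ solve the correspondingly rescaled problems on $B_1$, and the energy estimate transforms into \eqref{cond:lem-B1} for $(u_\varrho,v_\varrho)$ on $B_1$ with
\[
\Pi=C\Big(\frac{|\mu|(B_\varrho)}{\varrho^{n-1}}+\varrho\fint_{B_\varrho}|\mathrm{div}\mathbb{A}(\nabla\psi,\cdot)|\,dx\Big),
\]
the dimensional factors being absorbed into $C$. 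Lemma~\ref{lem:B1} applied to $(u_\varrho,v_\varrho)$ then produces \eqref{est-lem:u-v-B}, and the scale invariance $\fint_{B_1}|\nabla u_\varrho-\nabla v_\varrho|^\gamma=\fint_{B_\varrho}|\nabla u-\nabla v|^\gamma$, $\fint_{B_1}|\nabla u_\varrho|^\gamma=\fint_{B_\varrho}|\nabla u|^\gamma$, $\fint_{B_1}|\nabla u_\varrho|^{2-p}=\fint_{B_\varrho}|\nabla u|^{2-p}$ together with the above value of $\Pi^{\gamma/(p-1)}$ returns precisely \eqref{est:comp1}. The main obstacle is the middle step: the obstacle constraint prevents one from testing the variational inequality directly with a function built out of $u-v$, which forces the detour through the reaction measure $\lambda$, so one must choose $\varphi_{k,h}$ so that it both localizes the energy to the annulus $E_{k,h}$ and remains admissible, and then estimate $\lambda(B_\varrho)$ by the data at the linear rate in $k$; the remaining arguments are scaling bookkeeping and a direct appeal to Lemma~\ref{lem:B1}.
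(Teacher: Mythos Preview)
Your overall strategy --- establish the energy inequality \eqref{cond:lem-B1} for the pair $(u,v)$ and then invoke Lemma~\ref{lem:B1} after rescaling --- matches the paper exactly, as does your choice of test function $\varphi_{k,h}=T_k\big((u-v)-T_h(u-v)\big)$. The difference lies in how you handle the obstacle constraint, and here the paper takes a considerably shorter route that your write-up explicitly rules out.

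You assert that ``the obstacle constraint prevents one from testing the variational inequality directly with a function built out of $u-v$, which forces the detour through the reaction measure $\lambda$''. But this is exactly what the paper \emph{does} do directly. The observation you missed is that $v$, being a weak solution of \eqref{eq:test}, satisfies the inequality \eqref{obs-frozen} (with equality), and since $v=u\ge\psi$ on $\partial B_\varrho$, Lemma~\ref{lem:obs} gives $v\ge\psi$ a.e.\ in $B_\varrho$. Hence $u-\varphi_{k,h}\ge\min\{u,v\}\ge\psi$ everywhere, so $\varphi=u-\varphi_{k,h}$ is an admissible competitor in \eqref{eq:main}. Testing \eqref{eq:main} with this $\varphi$ and \eqref{eq:test} with $\phi=\varphi_{k,h}$, then subtracting and using the monotonicity in \eqref{cond:A} together with $|\varphi_{k,h}|\le k$, yields
\[
\int_{B_\varrho\cap E_{k,h}}\Phi(\nabla u,\nabla v)\,dx\;\le\;\Upsilon\,k\Big(|\mu|(B_\varrho)+\int_{B_\varrho}|\mathrm{div}\mathbb{A}(\nabla\psi,\cdot)|\,dx\Big)
\]
with no reaction measure appearing at all.

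Your route through $\lambda$ is in principle workable, but the step you call ``the standard bound for the obstacle reaction'' --- namely the \emph{local} estimate $\lambda(B_\varrho)\le|\mu|(B_\varrho)+\int_{B_\varrho}|\mathrm{div}\mathbb{A}(\nabla\psi,\cdot)|\,dx$ on an arbitrary interior ball --- is not what Lemma~\ref{lem:obs} asserts (that lemma only gives $v\ge\psi$), and your justification via $\nabla u=\nabla\psi$ on the contact set does not by itself deliver this inequality with these constants on $B_\varrho$. So your argument has a genuine gap at this point, whereas the paper's direct test sidesteps the issue entirely. The remaining scaling bookkeeping and the appeal to Lemma~\ref{lem:B1} are as you describe.
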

\begin{proof} 
For simplicity, let us assume that $\varrho = 1$ and write $B$ instead of $B_{\varrho}$ in this proof. The proof of~\eqref{est-lem:u-v-B} for the ball $B_{\varrho}$ with radius $\varrho>0$ will be obtained by scaling. For every $k$, $h>0$, let us introduce the following truncation
\begin{equation}\label{eq_testfunction}
T_{k,h}(s) = \begin{cases} 0, & \ \mbox{ if } 0 \le |s| < h, \\  (|s| - h) \, \mathrm{sign}(s) , & \ \mbox{ if } h \le |s| \le k + h, \\ k  \, \mathrm{sign}(s),  & \ \mbox{ if } |s| > k + h. \end{cases}
\end{equation}
It is obviously $T_{k,h}(s) = T_{k}(s - T_{h}(s))$ which yields that 
\begin{align*}
u - T_{k,h}(u-v) \ge u - T_{k}(u-v) \ge v, \quad \mbox{ in } \{u \ge v\}.
\end{align*}
Moreover, as $v$ satisfies the obstacle problem~\eqref{obs-frozen}, due to Lemma~\ref{lem:obs}, one has $v \ge \psi$ a.e. in $B$. Thus, $u - T_{k,h}(u-v) \ge \psi$ a.e. in $B$. For this reason, we may substitute $\varphi =  u - T_{k,h}(u-v)$ to the variational inequality~\eqref{eq:main} to obtain that
\begin{align}\label{est:B-1}
\int_{B} \langle \mathbb{A}(\nabla u,x), \nabla T_{k,h}(u-v) \rangle dx \le  \int_{B} T_{k,h}(u-v) d\mu.
\end{align}
On the other hand, since $v$ solves~\eqref{eq:test}, there holds
\begin{align}\label{eq:homogeneous-B}
\int_{B} \langle \mathbb{A}(\nabla v,x), \nabla \phi \rangle dx = \int_{B}  - \mathrm{div}\left(\mathbb{A}(\nabla \psi,x)\right)\phi  dx, 
\end{align}
for all $\phi \in W_0^{1,p}(B)$. Let us invoke $\phi =  T_{k,h}(u-v)$ in~\eqref{eq:homogeneous-B}, then
\begin{align}\label{est:B-2}
\int_{B} \langle \mathbb{A}(\nabla v,x), \nabla T_{k,h}(u-v) \rangle dx = \int_{B}  - \mathrm{div}\left(\mathbb{A}(\nabla \psi,x)\right) T_{k,h}(u-v)  dx, 
\end{align}
Subtracting two inequalities~\eqref{est:B-1} and~\eqref{est:B-2}, it enables us to get
\begin{align}\label{est:B-3}
& \int_{B} \langle \mathbb{A}(\nabla u,x) - \mathbb{A}(\nabla v,x), \nabla T_{k,h}(u-v) \rangle dx \le  \int_{B} T_{k,h}(u-v) d\mu \notag \\
& \hspace{6cm} + \int_{B} \mathrm{div}\left(\mathbb{A}(\nabla \psi,x)\right) T_{k,h}(u-v)  dx.
\end{align}
At this stage, taking the advantage of notation $E_{k,h}$ in~\eqref{set-E} and conditions in~\eqref{cond:A}, we deduce from~\eqref{est:B-3} that
\begin{align*}
\int_{B\cap E_{k,h}} \Phi(\nabla u, \nabla v) dx \le \Upsilon k \left(|\mu|(B) +  \int_{B} \left|\mathrm{div}\left(\mathbb{A}(\nabla \psi,x)\right)\right|dx\right). 
\end{align*} 
Following the argument of Lemma~\ref{lem:B1}, for every $\kappa_1, \kappa_2 \in (0,1)$ one has
\begin{align}\notag
 \fint_{B} |\nabla u - \nabla v|^{\gamma} dx & \le  \kappa_1 \chi_1 \fint_{B} |\nabla u|^{\gamma} dx  + \kappa_2 \chi_2 \left(\fint_{B}|\nabla u|^{2-p}dx\right)^{\frac{\gamma}{2-p}}\notag \\
 & \hspace{1cm} + C \left[|\mu|(B) + \int_{B} \left|\mathrm{div}\left(\mathbb{A}(\nabla \psi,x)\right)\right|dx \right]^{\frac{\gamma}{p-1}}. \notag
\end{align}
which implies to~\eqref{est:comp1} by scaling.
\end{proof}

\begin{lemma}\label{lem:comp2}
Let $1< p \le 2-\frac{1}{n}$, $\mu \in L^1(\Omega) \cap W^{-1,p'}(\Omega)$ and $u \in W^{1,p}(\Omega) \cap \mathcal{S}_0$ be a solution to obstacle problem $\mathcal{P}(\mathbb{A},\mu,\psi)$. Assume $B_{\varrho}$ is an open ball in $\Omega$ with radius $\varrho>0$. There exists $\tilde{u} \in W^{1,p}(B_{\varrho/4}) \cap W^{1,\infty}(B_{\varrho/8})$ such that for every $\gamma \in (\gamma_{1},\gamma_{2})$ and $\kappa, \kappa_1, \kappa_2 \in (0,1)$, there holds
\begin{align}\label{est:comp2-a}
\|\nabla \tilde{u}\|_{L^{\infty}(B_{\varrho/8})} & \le C\left(\fint_{B_{\varrho/4}} |\nabla u|^{\gamma} dx \right)^{\frac{1}{\gamma}}  + \kappa \chi_2 \left(\fint_{B_{\varrho/4}}|\nabla u|^{2-p}dx\right)^{\frac{1}{2-p}} \notag \\
& \hspace{2cm} + C \left[\frac{|\mu|(B_{\varrho/4})}{\varrho^{n-1}} + \varrho\fint_{B_{\varrho/4}} \left|\mathrm{div}\left(\mathbb{A}(\nabla \psi,x)\right)\right|dx \right]^{\frac{1}{p-1}},
\end{align}
and
\begin{align}\label{est:comp2-b}
\fint_{B_{\varrho/4}} |\nabla u - \nabla \tilde{u}|^{\gamma} dx & \le  \kappa_1 \chi_1 \fint_{B_{\varrho/4}} |\nabla u|^{\gamma} dx + (\kappa_2 +\delta)\chi_2 \left(\fint_{B_{\varrho/4}}|\nabla u|^{2-p}dx\right)^{\frac{\gamma}{2-p}} \notag \\
 & \hspace{2cm} + C \left[\frac{|\mu|(B_{\varrho/4})}{\varrho^{n-1}} + \varrho\fint_{B_{\varrho/4}} \left|\mathrm{div}\left(\mathbb{A}(\nabla \psi,x)\right)\right|dx \right]^{\frac{\gamma}{p-1}},
\end{align} 
if provided $[\mathbb{A}]^{r_0} \le \delta$, where $C = C(\gamma,\kappa_1,\kappa_2,\mathtt{data})>0$.
\end{lemma}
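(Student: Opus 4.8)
The plan is to combine a second comparison step (freezing the coefficients of $\mathbb{A}$ on the ball) with interior $W^{1,\infty}$-regularity for the limiting homogeneous equation, and then chain the resulting estimates with Lemma~\ref{lem:comp1}. Concretely, I would first introduce $v\in u+W_0^{1,p}(B_{\varrho})$ solving the obstacle-free problem \eqref{eq:test}, for which Lemma~\ref{lem:comp1} already yields
\[
\fint_{B_{\varrho}}|\nabla u-\nabla v|^{\gamma}\,dx\le \kappa_1\chi_1\fint_{B_{\varrho}}|\nabla u|^{\gamma}\,dx+\kappa_2\chi_2\Big(\fint_{B_{\varrho}}|\nabla u|^{2-p}\,dx\Big)^{\frac{\gamma}{2-p}}+C\,\mathbf{E}^{\frac{\gamma}{p-1}},
\]
where I write $\mathbf{E}=\varrho^{1-n}|\mu|(B_{\varrho})+\varrho\fint_{B_{\varrho}}|\mathrm{div}(\mathbb{A}(\nabla\psi,x))|\,dx$ for brevity. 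Next I would freeze the coefficients: let $\bar{\mathbb{A}}=\overline{\mathbb{A}}_{B_{\varrho/4}}$ and let $w\in v+W_0^{1,p}(B_{\varrho/4})$ solve $-\mathrm{div}\,\bar{\mathbb{A}}(\nabla w)=0$ in $B_{\varrho/4}$ with $w=v$ on $\partial B_{\varrho/4}$. A standard comparison estimate using the ellipticity/growth \eqref{cond:A} and the $(r_0,\delta)$-BMO bound $[\mathbb{A}]^{r_0}\le\delta$ (together with the singular-case Lemmas~\ref{lem:Tech2}–\ref{lem:Tech3} exactly as in the proof of Lemma~\ref{lem:B1}) gives, after rescaling back,
\[
\fint_{B_{\varrho/4}}|\nabla v-\nabla w|^{\gamma}\,dx\le \kappa_1\chi_1\fint_{B_{\varrho/4}}|\nabla v|^{\gamma}\,dx+(\kappa_2+C\delta)\chi_2\Big(\fint_{B_{\varrho/4}}|\nabla v|^{2-p}\,dx\Big)^{\frac{\gamma}{2-p}}+C\,\mathbf{E}^{\frac{\gamma}{p-1}}.
\]

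Then I would invoke the interior Lipschitz regularity for the homogeneous constant-coefficient equation: by the classical $C^{1,\alpha}$ theory (DiBenedetto, Tolksdorf, Lieberman), $\nabla w\in L^{\infty}(B_{\varrho/8})$ with
\[
\|\nabla w\|_{L^{\infty}(B_{\varrho/8})}\le C\Big(\fint_{B_{\varrho/4}}|\nabla w|^{\gamma}\,dx\Big)^{\frac{1}{\gamma}},
\]
for any $\gamma>0$ (using the standard self-improving/interpolation argument to lower the exponent from $p$ to $\gamma$). Set $\tilde{u}:=w$. The $W^{1,\infty}(B_{\varrho/8})$ bound \eqref{est:comp2-a} then follows by writing $\nabla w=\nabla u-(\nabla u-\nabla v)-(\nabla v-\nabla w)$, applying the two comparison inequalities above inside the $L^{\gamma}$-average on $B_{\varrho/4}$, and absorbing the $\chi_1$-terms (note $\kappa_1\chi_1\fint|\nabla u|^{\gamma}$ is dominated by $C\fint_{B_{\varrho/4}}|\nabla u|^{\gamma}$, which is exactly the first term allowed on the right of \eqref{est:comp2-a}); for the $\chi_2$-terms I would replace $\fint|\nabla v|^{2-p}$ by $\fint|\nabla u|^{2-p}$ up to the already-present comparison error, using H\"older when $\gamma<2-p$ in the $\chi_2$-regime (cf. Remark~\ref{rmk:1}). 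The gradient-difference estimate \eqref{est:comp2-b} is obtained by the triangle inequality $|\nabla u-\nabla\tilde u|^{\gamma}\le C|\nabla u-\nabla v|^{\gamma}+C|\nabla v-\nabla w|^{\gamma}$, integrating over $B_{\varrho/4}$, inserting the two comparison bounds, and again trading $\fint|\nabla v|^{\gamma}$ and $\fint|\nabla v|^{2-p}$ for the corresponding quantities in $\nabla u$ at the cost of $\kappa_i$-terms and one more copy of $\mathbf{E}^{\gamma/(p-1)}$; the $C\delta$ contribution is what produces the $(\kappa_2+\delta)\chi_2$ coefficient. Rescaling from the unit ball to $B_{\varrho}$ is routine and only affects the normalization of $\mathbf{E}$, which is why it appears in the dimensionally correct form $\varrho^{1-n}|\mu|(B_{\varrho/4})+\varrho\fint|\mathrm{div}(\mathbb{A}(\nabla\psi,\cdot))|$.

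The main obstacle I anticipate is the frozen-coefficient comparison step in the singular range $1<p\le 2-\frac1n$: the right-hand side data $\mathrm{div}(\mathbb{A}(\nabla\psi,\cdot))\in L^1$ and the measure $\mu$ prevent one from testing with $v-w$ directly, so I must again run the Benilan-type truncation argument (testing with $T_{k,h}(v-w)$ and using Lemmas~\ref{lem:Tech2}, \ref{lem:Tech3}, \ref{lem:B1}) to control $\Phi(\nabla v,\nabla w)$ over the sets $E_{k,h}$, and the BMO term $\int_{B_{\varrho/4}}|\mathbb{A}(\nabla v,x)-\bar{\mathbb{A}}(\nabla v)|\,|\nabla T_{k,h}(v-w)|\,dx$ must be estimated by $\delta$ times a quantity of the form $\Big(\fint|\nabla v|^{p-1}\Big)^{\cdots}$ and then converted, via H\"older and Lemma~\ref{lem:Tech2} with the small parameter, into the $(\kappa_2+C\delta)\chi_2$ term and a harmless multiple of $\mathbf{E}^{\gamma/(p-1)}$; keeping track of which exponents $\gamma$ make the intermediate integrability $|\nabla v|\in L^{2-p}$ available (inherited from $|\nabla u|\in L^{2-p}$ via the first comparison) is the delicate bookkeeping. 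Everything else — the interior Lipschitz bound and the scaling — is standard.
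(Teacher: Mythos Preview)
Your two-step chain $u\to v\to w=\tilde u$ collapses what the paper does in \emph{three} steps, and the collapse is where the difficulty lies. The paper first passes from $v$ to an intermediate $w$ solving the \emph{homogeneous} problem $-\mathrm{div}\,\mathbb{A}(\nabla w,x)=0$ in $B_{\varrho/2}$ with $w=v$ on $\partial B_{\varrho/2}$; only \emph{then} does it freeze the coefficients, letting $\tilde u$ solve $-\mathrm{div}\,\overline{\mathbb{A}}_{B_{\varrho/4}}(\nabla\tilde u)=0$ with $\tilde u=w$ on $\partial B_{\varrho/4}$. The point of this splitting is to decouple the two sources of error. The passage $v\to w$ carries only the $L^1$ right-hand side $\mathrm{div}\,\mathbb{A}(\nabla\psi,\cdot)$, so testing with $T_{k,h}(v-w)$ gives precisely $\int_{E_{k,h}}\Phi(\nabla v,\nabla w)\le k\Pi$, and Lemma~\ref{lem:B1} applies cleanly. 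The passage $w\to\tilde u$ involves only the BMO oscillation of $\mathbb{A}$, but now $w$ is a solution to a homogeneous equation, hence enjoys the reverse H\"older inequality; this allows the paper to invoke a standard $W^{1,p}$-energy comparison (cited from~\cite{55QH4}) yielding $\fint|\nabla\tilde u-\nabla w|\le C\delta\bigl(\fint|\nabla w|^p\bigr)^{1/p}$, with no truncation needed.

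In your scheme the frozen-coefficient comparison $v\to w$ must absorb \emph{both} errors at once. Testing with $T_{k,h}(v-w)$ produces, besides the good term $k\!\int|\mathrm{div}\,\mathbb{A}(\nabla\psi,\cdot)|$, the BMO contribution
\[
\int_{E_{k,h}}\bigl|\mathbb{A}(\nabla v,x)-\overline{\mathbb{A}}_{B_{\varrho/4}}(\nabla v)\bigr|\,|\nabla v-\nabla w|\,dx
\;\le\;\int_{E_{k,h}}\theta(x)\,|\nabla v|^{p-1}|\nabla v-\nabla w|\,dx,
\]
and this term has \emph{no} factor of $k$: it does not fit the hypothesis $\int_{E_{k,h}}\Phi\le k\Pi$ of Lemma~\ref{lem:B1}, so the machinery of Lemmas~\ref{lem:Tech2}--\ref{lem:B1} cannot be invoked ``exactly as in the proof of Lemma~\ref{lem:B1}'' as you propose. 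Trying instead to test with $v-w$ untruncated forces you to pair $\mathrm{div}\,\mathbb{A}(\nabla\psi,\cdot)$ with an unbounded function, or to estimate via $\|\nabla\psi\|_{L^p}$ rather than $\|\mathrm{div}\,\mathbb{A}(\nabla\psi,\cdot)\|_{L^1}$, which is not the quantity appearing in~\eqref{est:comp2-a}--\eqref{est:comp2-b}. Moreover, $v$ solves an equation with $L^1$ right-hand side and therefore has no reverse H\"older estimate, so you cannot convert a $\bigl(\fint|\nabla v|^p\bigr)^{1/p}$ bound back down to exponent~$\gamma$. Inserting the intermediate homogeneous solution $w$, as the paper does, is exactly what cures both issues.
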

\begin{proof}
Let $v \in u + W_0^{1,p}(B_{\varrho})$ be a weak solution to~\eqref{eq:test}. The argument of Lemma~\ref{lem:comp1} gives us
\begin{align}\label{est-I}
\fint_{B_{\varrho}} |\nabla u - \nabla v|^{\gamma} dx \ & \le  \kappa_1 \chi_1 \fint_{B_{\varrho}} |\nabla u|^{\gamma} dx   + \kappa_2 \chi_2 \left(\fint_{B_{\varrho}}|\nabla u|^{2-p}dx\right)^{\frac{\gamma}{2-p}} \notag \\
 & \hspace{2cm} + C \left[\frac{|\mu|(B_{\varrho})}{\varrho^{n-1}} + \varrho\fint_{B_{\varrho}} \left|\mathrm{div}\left(\mathbb{A}(\nabla \psi,x)\right)\right|dx \right]^{\frac{\gamma}{p-1}},
\end{align}
for every $\kappa_1, \kappa_2 \in (0,1)$. Assume further that $w$ solves the following problem
\begin{align}\label{eq:comp2}
\begin{cases} -\mathrm{div}(\mathbb{A}(\nabla w,x)) & = 0, \quad \mbox{ in } \ B_{\varrho/2}, \\
\hspace{1cm} w & = v, \quad \mbox{ on } \ \partial B_{\varrho/2}.\end{cases}
\end{align}
Similar to the argument as in proof of Lemma~\eqref{lem:comp1}, for $B_{\varrho/2} \equiv B$, one obtains that
\begin{align}\notag
\int_{B\cap E_{k,h}} \Phi(\nabla v, \nabla w) dx \le \Upsilon k \int_{B} \left|\mathrm{div}\left(\mathbb{A}(\nabla \psi,x)\right)\right|dx, \quad \forall h, k>0,
\end{align}
from which Lemma~\ref{lem:B1} and scaling follow, it is sufficient to prove that
\begin{align}\label{est-II}
 & \fint_{B_{\varrho/2}} |\nabla v - \nabla w|^{\gamma} dx  \le  \kappa_3 \chi_1 \fint_{B_{\varrho/2}} |\nabla v|^{\gamma} dx  \notag \\
 & \hspace{1cm} + \kappa_4 \chi_2 \left(\fint_{B_{\varrho/2}}|\nabla v|^{2-p}dx\right)^{\frac{\gamma}{2-p}} + C \left[\varrho\fint_{B_{\varrho/2}} \left|\mathrm{div}\left(\mathbb{A}(\nabla \psi,x)\right)\right|dx \right]^{\frac{\gamma}{p-1}},
\end{align} 
for every $\kappa_3, \kappa_4 \in (0,1)$. Next, let $\tilde{u}$ be the unique solution to
\begin{align}\label{eq:comp3}
\begin{cases} -\mathrm{div}(\overline{\mathbb{A}}_{B_{\varrho/4}}(\nabla \tilde{u})) & = 0, \quad \mbox{ in } \ B_{\varrho/4}, \\
\hspace{1cm} \tilde{u} & = w, \quad \mbox{on } \ \partial B_{\varrho/4}.\end{cases}
\end{align}
In the same manner as proof of~\cite[Proposition 2.3]{55QH4}, under the assumption $[\mathbb{A}]^{r_0} \le \delta$, it allows us to show that
\begin{align*}
\|\nabla \tilde{u}\|_{L^{\infty}(B_{\varrho/8})} \le C \left(\fint_{B_{\varrho/4}}|\nabla w|^pdx\right)^{\frac{1}{p}},
\end{align*}
and 
\begin{align*}
\fint_{B_{\varrho/4}}|\nabla \tilde{u} - \nabla w|dx \le C \delta \left(\fint_{B_{\varrho/4}}|\nabla w|^pdx\right)^{\frac{1}{p}}.
\end{align*}
As a consequence of the reverse H\"older inequality to $w$, the comparison estimates~\eqref{est-I} and~\eqref{est-II}, it establishes our conclusion for both~\eqref{est:comp2-a} and~\eqref{est:comp2-b} by choosing suitable values of $\kappa_3$ and $\kappa_4$. 
\end{proof}

In order to obtain the comparison estimates up to the boundary, one needs an extra regularity assumption on the boundary. And here, the condition $\mathcal{H}_{r_0,\delta}$ is assumed on the coefficient $(\mathbb{A};\Omega)$, for some $r_0>0$ and $\delta \in (0,1/2)$. Under this assumption, in a completely similar argument as in Lemma~\ref{lem:comp2}, it is possible to claim the boundary comparison estimates. We also refer to the proof of~\cite[Proposition 2.6]{55QH4} to explore more.

\begin{lemma}\label{lem:comp3}
Let $1< p \le 2-\frac{1}{n}$, $\mu \in L^1(\Omega) \cap W^{-1,p'}(\Omega)$ and $u \in W^{1,p}(\Omega) \cap \mathcal{S}_0$ be a solution to obstacle problem $\mathcal{P}(\mathbb{A},\mu,\psi)$. Assume that $(\mathbb{A};\Omega) \in \mathcal{H}_{r_0,\delta}$ for some $r_0>0$ and $\delta \in (0,1/2)$. For each $\xi \in \partial\Omega$ and $0 < \varrho \le r_0$, there exists $\tilde{u} \in W^{1,p}(\Omega_{\varrho/10}) \cap W^{1,\infty}(\Omega_{\varrho/100})$ such that for every $\gamma \in (\gamma_{1},\gamma_{2})$ and $\kappa, \kappa_1, \kappa_2 \in (0,1)$, there holds
\begin{align}\label{est:comp3-a}
\|\nabla \tilde{u}\|_{L^{\infty}(\Omega_{\varrho/100})} & \le C\left(\fint_{\Omega_{\varrho/10}} |\nabla u|^{\gamma} dx \right)^{\frac{1}{\gamma}}  + \kappa \chi_2 \left(\fint_{\Omega_{\varrho/10}}|\nabla u|^{2-p}dx\right)^{\frac{1}{2-p}} \notag \\
& \hspace{2cm} + C \left[\frac{|\mu|(\Omega_{\varrho/10})}{\varrho^{n-1}} + \varrho\fint_{\Omega_{\varrho/10}} \left|\mathrm{div}\left(\mathbb{A}(\nabla \psi,x)\right)\right|dx \right]^{\frac{1}{p-1}},
\end{align}
and
\begin{align}\label{est:comp3-b}
\fint_{\Omega_{\varrho/10}} |\nabla u - \nabla \tilde{u}|^{\gamma} dx &\le  \kappa_1 \chi_1 \fint_{\Omega_{\varrho/10}} |\nabla u|^{\gamma} dx + (\kappa_2 +\delta)\chi_2 \left(\fint_{\Omega_{\varrho/10}}|\nabla u|^{2-p}dx\right)^{\frac{\gamma}{2-p}} \notag \\
 & \hspace{1cm} + C \left[\frac{|\mu|(B_{\varrho/10})}{\varrho^{n-1}} + \varrho\fint_{\Omega_{\varrho/10}} \left|\mathrm{div}\left(\mathbb{A}(\nabla \psi,x)\right)\right|dx \right]^{\frac{\gamma}{p-1}}.
\end{align} 
Here we denote $\Omega_{\varrho} = B_{\varrho}(\xi) \cap \Omega$ and the constant $C = C(\gamma,\kappa_1,\kappa_2,\mathtt{data})>0$.
\end{lemma}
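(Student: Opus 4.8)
The plan is to mimic the interior argument of Lemma~\ref{lem:comp2} verbatim, replacing every interior ball $B_{\varrho}$ by the intersection $\Omega_{\varrho}=B_{\varrho}(\xi)\cap\Omega$ and exploiting the Reifenberg flatness of $\partial\Omega$ together with the $(r_0,\delta)$-BMO hypothesis $(\mathbb{A};\Omega)\in\mathcal{H}_{r_0,\delta}$. First I would fix $\xi\in\partial\Omega$ and $0<\varrho\le r_0$ and build the same chain of four comparison problems: (i) the obstacle-free problem with right-hand side $-\mathrm{div}(\mathbb{A}(\nabla\psi,\cdot))$ posed on $\Omega_{\varrho}$ with boundary datum $u$ on $\partial\Omega_{\varrho}$, whose solution $v$ satisfies $v\in u+W^{1,p}_0(\Omega_{\varrho})$; (ii) the homogeneous problem $-\mathrm{div}(\mathbb{A}(\nabla w,\cdot))=0$ on $\Omega_{\varrho/2}$ with $w=v$ on $\partial\Omega_{\varrho/2}$; and finally (iii) the frozen-coefficient homogeneous problem $-\mathrm{div}(\overline{\mathbb{A}}_{B_{\varrho/4}(\xi)}(\nabla\tilde u))=0$ on $\Omega_{\varrho/4}$ with $\tilde u=w$ on $\partial\Omega_{\varrho/4}$. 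Here, since we are at the boundary, the frozen operator is a homogeneous zero-boundary-value problem on a Reifenberg-flat domain, and $\tilde u$ is extended by zero outside $\Omega$ so that it lives on $\Omega_{\varrho/10}$ and is Lipschitz on $\Omega_{\varrho/100}$.

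The core steps then run exactly as in the interior case. For step~(i), I would test the variational inequality~\eqref{eq:main} with $\varphi=u-T_{k,h}(u-v)$ — which is admissible because $v\ge\psi$ a.e.\ on $\Omega_{\varrho}$ by Lemma~\ref{lem:obs}, hence $u-T_{k,h}(u-v)\ge v\ge\psi$ on $\{u\ge v\}$, and symmetrically on $\{u<v\}$ — and test the weak formulation of the $v$-equation with $\phi=T_{k,h}(u-v)\in W^{1,p}_0(\Omega_{\varrho})$. Subtracting and invoking the monotonicity in~\eqref{cond:A} gives
\begin{align*}
\int_{\Omega_{\varrho}\cap E_{k,h}}\Phi(\nabla u,\nabla v)\,dx\le \Upsilon k\left(|\mu|(\Omega_{\varrho})+\int_{\Omega_{\varrho}}\left|\mathrm{div}\left(\mathbb{A}(\nabla\psi,x)\right)\right|dx\right),
\end{align*}
whence Lemma~\ref{lem:B1} (applied on the ball $B_{\varrho}(\xi)$, noting that the argument there only uses $v\in u+W^{1,p}_0$ and a Sobolev inequality, both of which survive the intersection with $\Omega$ once we extend $u-v$ by zero) followed by scaling produces the analogue of~\eqref{est-I} on $\Omega_{\varrho}$. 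Step~(ii) is identical with $\mu$ absent, yielding the analogue of~\eqref{est-II} on $\Omega_{\varrho/2}$. For step~(iii) I would quote the boundary regularity and comparison estimates for the frozen homogeneous problem on a Reifenberg-flat domain — precisely the boundary version of \cite[Proposition 2.3]{55QH4}, i.e.\ \cite[Proposition 2.6]{55QH4} — to obtain $\|\nabla\tilde u\|_{L^{\infty}(\Omega_{\varrho/100})}\le C(\fint_{\Omega_{\varrho/10}}|\nabla w|^p\,dx)^{1/p}$ and $\fint_{\Omega_{\varrho/10}}|\nabla\tilde u-\nabla w|\,dx\le C\delta(\fint_{\Omega_{\varrho/10}}|\nabla w|^p\,dx)^{1/p}$. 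Combining the three comparison estimates, a reverse H\"older inequality for $w$ to pass from $L^p$ to $L^{\gamma}$ and $L^{2-p}$ averages, and absorbing the various small parameters by relabelling $\kappa_3,\kappa_4$ in terms of $\kappa_1,\kappa_2$ then gives~\eqref{est:comp3-a}--\eqref{est:comp3-b}.

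The main obstacle is genuinely the boundary step~(iii): one must know that the Dirichlet problem for the frozen, $x$-independent operator $\overline{\mathbb{A}}_{B_{\varrho/4}(\xi)}$ on the Reifenberg-flat piece $\Omega_{\varrho/4}$ still enjoys Lipschitz regularity near the boundary and the $L^1$-smallness of $\nabla\tilde u-\nabla w$ controlled by $\delta$ — this is exactly where the flatness parameter $\delta<1/2$ and the smallness of $[\mathbb{A}]^{r_0}\le\delta$ are used, and it is the content one cites from \cite[Proposition 2.6]{55QH4}. A secondary technical point is making sure the truncation test function $u-T_{k,h}(u-v)$ is admissible in~\eqref{eq:main}: this requires $v\ge\psi$ on $\Omega_{\varrho}$, which is delivered by Lemma~\ref{lem:obs} applied to~\eqref{eq:test} (the boundary-free problem is equivalent to the obstacle problem~\eqref{obs-frozen} once one observes that its solution automatically lies above $\psi$), and that $u-T_{k,h}(u-v)\in\mathcal{S}_0$, which follows since $T_{k,h}(u-v)$ has compact support in $\Omega_{\varrho}$ and hence modifies $u$ only inside $\Omega$. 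Apart from these, every estimate is a transcription of Lemmas~\ref{lem:comp1} and~\ref{lem:comp2} with $B_{\varrho}\rightsquigarrow\Omega_{\varrho}$, so I would keep the write-up short and refer back to those proofs for the repeated computations.
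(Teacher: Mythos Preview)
Your proposal is correct and follows essentially the same approach as the paper, which does not give a detailed proof but simply states that the boundary case is obtained ``in a completely similar argument as in Lemma~\ref{lem:comp2}'' together with a reference to \cite[Proposition~2.6]{55QH4} for the boundary regularity step. Your outline of the three-step comparison chain on $\Omega_{\varrho}$, the use of Lemma~\ref{lem:obs} and Lemma~\ref{lem:B1}, and the appeal to \cite[Proposition~2.6]{55QH4} for the frozen-coefficient Lipschitz and smallness estimates on the Reifenberg-flat boundary piece matches the paper's intended argument exactly.
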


\section{Level-set decay estimate}
\label{sec:levelset}
In this section, we consider $u$ as a solution to problem $\mathcal{P}(\mathbb{A},\mu,\psi)$ satisfying $|\nabla u| \in L^{2-p}(\Omega)$ with $1<p\le 2-\frac{1}{n}$ and given data $\mu \in \mathcal{M}_b(\Omega)$ and $\mathrm{div}\left(\mathbb{A}(\nabla \psi,\cdot)\right) \in L^1(\Omega)$. Moreover, we assume that $(\mathbb{A};\Omega) \in \mathcal{H}_{r_0,\delta}$ for $r_0>0$ and $\delta>0$ small enough. For the sake of readability, for some $\lambda>0$ and $\varepsilon \in (0,1)$, let us introduce the following subsets of $\Omega$:
\begin{align}\label{def:VW}
\begin{cases}
\mathbb{V} = \mathbb{V}_1 \setminus \left(\mathbb{V}_2 \cup \mathbb{V}_3\right), \\
\mathbb{V}_1 = \left\{\mathbf{M}_{\alpha}(|\nabla u|^{\gamma})> a\lambda\right\}, \\ 
\mathbb{V}_2 = \begin{cases} \left\{\left[\mathbf{M}_{\sigma}(|\nabla u|^{2-p})\right]^{\frac{\gamma}{2-p}} > \varepsilon^{-\gamma}\lambda \right\},  \ \mbox{ if } 1< p \le \frac{3n-2}{2n-1}, \\ \emptyset,  \ \mbox{ if } \frac{3n-2}{2n-1}< p \le 2 - \frac{1}{n}, \end{cases} \\
\mathbb{V}_3 = \left\{\left[\mathbf{M}_{\beta}(\mu) + \mathbf{M}_{\beta}\left(\mathrm{div}\left(\mathbb{A}(\nabla \psi,\cdot)\right)\right)\right]^{\frac{\gamma}{p-1}} >  \varepsilon^2 \lambda\right\}, \\ 
\mathbb{W} = \left\{\mathbf{M}_{\alpha}(|\nabla u|^{\gamma})> \lambda\right\}.
\end{cases}
\end{align}

\begin{lemma}\label{lem:step1}
Let $D_0=\mathrm{diam}(\Omega)$, $0<R_0<r_0$ and $a>0$. One can find $\varepsilon_0 = \varepsilon_0(a,\tilde{\mathtt{data}},D_0/R_0) \in (0,1)$ such that 
$\mathcal{L}^n \left(\mathbb{V}\right) <  \varepsilon \mathcal{L}^n ({B_{R_0}}(0))$, for every $\lambda>0$ and $\varepsilon \in (0,\varepsilon_0)$.
\end{lemma}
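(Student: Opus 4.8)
The plan is to prove the bound $\mathcal{L}^n(\mathbb{V}) < \varepsilon \mathcal{L}^n(B_{R_0}(0))$ by contradiction via the usual density/nondegeneracy argument. Suppose instead that $\mathcal{L}^n(\mathbb{V}) \ge \varepsilon \mathcal{L}^n(B_{R_0}(0))$. The first step is to exploit the fact that $\mathbb{V} \subset \mathbb{V}_1 = \{\mathbf{M}_\alpha(|\nabla u|^\gamma) > a\lambda\}$: since $u$ (through its approximating solutions) satisfies the a priori integrability $|\nabla u| \in L^{2-p}(\Omega)$ and the Marcinkiewicz-type bounds coming from Lemma~\ref{lem:M_alpha}, we know that $\mathbf{M}_\alpha(|\nabla u|^\gamma)$ cannot be large on a set of full measure; quantitatively, $\mathcal{L}^n(\mathbb{V}_1)$ is controlled by $a^{-\theta}\lambda^{-\theta}$ times a norm of $|\nabla u|$ for an appropriate exponent $\theta$ depending on $\gamma, \alpha, n, p$ (via the weak-type estimate $\|\mathbf{M}_\alpha(|\nabla u|^\gamma)\|_{L^{ns/(n-\alpha s),\infty}} \le C\||\nabla u|^\gamma\|_{L^s}$ with $s$ chosen so that $\gamma s < n(p-1)/(n-1)$, which is possible precisely because $\gamma < \gamma_2$).

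Second, I would turn the measure lower bound into a lower bound on $\lambda$: combining $\mathcal{L}^n(\mathbb{V}_1) \ge \mathcal{L}^n(\mathbb{V}) \ge \varepsilon \mathcal{L}^n(B_{R_0}(0))$ with the weak-type estimate forces
$$\varepsilon \,\mathcal{L}^n(B_{R_0}(0)) \le \mathcal{L}^n(\mathbb{V}_1) \le C\,(a\lambda)^{-\theta}\,\big\||\nabla u|^\gamma\big\|_{L^{s}(\Omega)}^{\theta'},$$
so that $\lambda \le C \varepsilon^{-1/\theta} a^{-1} (R_0)^{-n/\theta} \|\nabla u\|^{\cdots}$, i.e. $\lambda$ is bounded above by a constant depending on $a$, $\tilde{\mathtt{data}}$, the ratio $D_0/R_0$, and $\varepsilon$. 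At the same time, one needs a matching statement from below — here the point is that $\mathbb{V}$ being nonempty forces $\mathbf{M}_\alpha(|\nabla u|^\gamma)(x_0) > a\lambda$ at some $x_0$, which via the definition of the fractional maximal operator and a crude reverse bound over a ball of radius comparable to $D_0$ gives a lower bound $\lambda \ge c\, a^{-1} D_0^{-\cdots}\|\nabla u\|_{L^\gamma(\Omega)}^{\cdots}$, or rather one observes that if $\lambda$ were too small the whole of $B_{R_0}(0)$ (or of $\Omega$) would lie in $\mathbb{V}_1$, which would contradict $\mathcal{L}^n(\mathbb{V}) < \mathcal{L}^n(\Omega)$ once we remove the pieces $\mathbb{V}_2, \mathbb{V}_3$.

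The cleanest route, which I would actually follow, avoids the two-sided squeeze: use the contradiction hypothesis to deduce that $\lambda$ lies in a \emph{bounded} range, and then simply choose $\varepsilon_0$ small enough (depending on $a$, $\tilde{\mathtt{data}}$ and $D_0/R_0$) that the upper bound on $\lambda$ produced by the weak-type estimate is strictly smaller than any value of $\lambda$ for which $\mathbb{V}$ can be nonempty — or, more efficiently, observe that $\mathcal{L}^n(\mathbb{V}_1) \le C(a\lambda)^{-\theta}$ combined with $\mathcal{L}^n(\mathbb{V}_1) \ge \varepsilon \mathcal{L}^n(B_{R_0}(0)) \ge \varepsilon c R_0^n$ yields $\lambda^\theta \le C a^{-\theta}\varepsilon^{-1} R_0^{-n}$, while simultaneously the definition of $\mathbb{V}$ together with $\mathbb{V} \subset \mathbb{W} = \{\mathbf{M}_\alpha(|\nabla u|^\gamma) > \lambda\}$ and a covering of $\Omega$ by balls forces, when $\lambda$ is in this bounded range and $\varepsilon$ small, that $\mathbb{V}$ is already covered by the exceptional sets $\mathbb{V}_2 \cup \mathbb{V}_3$, i.e. $\mathcal{L}^n(\mathbb{V}) = 0 < \varepsilon\mathcal{L}^n(B_{R_0}(0))$, a contradiction. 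The constant $\varepsilon_0$ comes out explicitly as a function of $a$, the exponents packaged in $\tilde{\mathtt{data}}$, and $D_0/R_0$.

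The main obstacle I anticipate is bookkeeping the exponents: one must verify that $\gamma < \gamma_2 \le \min\{np/(3n-2),\ (p-1)n/(n-1)\}$ guarantees the existence of an admissible $s \ge 1$ with $\alpha s < n$ and $\gamma s < n(p-1)/(n-1)$ so that $|\nabla u|^\gamma \in L^s(\Omega)$ holds uniformly (using the convergence~\eqref{conver} of the approximating gradients), and then track how $\theta = ns/(n-\alpha s)$, the relation $\beta = 1 + (p-1)\alpha/\gamma$, and the scaling $\varrho \mapsto \varrho/R_0$ interact. A secondary subtlety is that the estimate must be stated for \emph{all} $\lambda > 0$ with a single $\varepsilon_0$; this is exactly why the bounded-range-of-$\lambda$ mechanism is needed — the contradiction hypothesis itself confines $\lambda$, after which a uniform choice of $\varepsilon_0$ depending only on $(a, \tilde{\mathtt{data}}, D_0/R_0)$ suffices.
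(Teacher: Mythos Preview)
Your approach has a genuine gap: you never use the information carried by the set $\mathbb{V}_3$, and without it the argument cannot produce a bound that is \emph{uniform in $\lambda$}. The weak-type estimate you write,
\[
\varepsilon\,\mathcal{L}^n(B_{R_0}(0)) \le \mathcal{L}^n(\mathbb{V}_1) \le C(a\lambda)^{-\theta}\big\||\nabla u|^{\gamma}\big\|_{L^s(\Omega)}^{\theta'},
\]
only yields $\lambda \le C\varepsilon^{-1/\theta}(\cdots)$, i.e. an upper bound on $\lambda$ that \emph{blows up} as $\varepsilon\to 0$. Your proposed lower bound on $\lambda$ (from the existence of a point $x_0$ with $\mathbf{M}_{\alpha}(|\nabla u|^{\gamma})(x_0)>a\lambda$) is not effective either: the supremum defining $\mathbf{M}_{\alpha}$ may be realised on arbitrarily small balls, so no uniform lower bound on $\lambda$ follows. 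The hand-waved step ``$\mathbb{V}$ is already covered by $\mathbb{V}_2\cup\mathbb{V}_3$'' is unjustified.

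The key idea you are missing is the one the paper uses. If $\mathbb{V}\neq\emptyset$, then by definition $\mathbb{V}\subset\Omega\setminus\mathbb{V}_3$, so there exists $\xi\in\Omega$ with $\mathbf{M}_{\beta}(\mu)(\xi)\le(\varepsilon^2\lambda)^{(p-1)/\gamma}$. Evaluating at radius $D_0$ gives $|\mu|(\Omega)\le C D_0^{\,n-\beta}(\varepsilon^2\lambda)^{(p-1)/\gamma}$. One then invokes the \emph{global} a~priori estimate for approximating solutions (Scheven's bound, \cite[Lemma~3.3]{Scheven2012}) to obtain $\int_{\Omega}|\nabla u|^{\gamma}dx\le C D_0^{\,n-(\beta-1)\gamma/(p-1)}\varepsilon^2\lambda$. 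Inserting this into the weak $(1,\frac{n}{n-\alpha})$ bound for $\mathbf{M}_{\alpha}$ gives
\[
\mathcal{L}^n(\mathbb{V})\le\mathcal{L}^n(\mathbb{V}_1)\le \Big[\tfrac{C}{a\lambda}\int_{\Omega}|\nabla u|^{\gamma}dx\Big]^{\frac{n}{n-\alpha}}\le C_1\big(a^{-1}\varepsilon^2\big)^{\frac{n}{n-\alpha}}\mathcal{L}^n(B_{R_0}(0)),
\]
where the $\lambda$'s \emph{cancel} precisely because of the scaling relation $\beta=1+(p-1)\alpha/\gamma$ (so that $[n-(\beta-1)\gamma/(p-1)]\cdot\frac{n}{n-\alpha}=n$). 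One then simply chooses $\varepsilon_0$ so that $C_1(a^{-1}\varepsilon_0^2)^{n/(n-\alpha)}<\varepsilon_0$. No contradiction argument, no two-sided squeeze on $\lambda$, and only $s=1$ in Lemma~\ref{lem:M_alpha} is needed.
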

\begin{proof} It is certainly pleasing when $\mathbb{V} = \emptyset$. On the other hand, for the case there is $\xi \in \Omega$ such that $\left[\mathbf{M}_{\beta}(\mu)(\xi)\right]^{\frac{\gamma}{p-1}}\le  \varepsilon^2 \lambda$, it follows that
\begin{align}\label{3.4}
|\mu|(\Omega) \le {|\mu|(B_{D_0}(\xi))} \le C{D_0^{n-\beta}} \mathbf{M}_{\beta}(\mu)(\xi) \le CD_0^{n-\beta}(\varepsilon^2\lambda)^{\frac{p-1}{\gamma}}.
\end{align}
Moreover, since $u \in \mathcal{S}_0$ is a limit of approximating solutions to problem $\mathcal{P}(\mathbb{A},\mu,\psi)$, there exist a sequence $(\mu_k)_{k \in \mathbb{N}} \subset L^1(\Omega) \cap W^{-1,p'}(\Omega)$ with $\mu_k \to \mu$ in the narrow topology of measures in $\mathcal{M}_b(\Omega)$ and a sequence of solutions $u_k \in W^{1,p}(\Omega) \cap \mathcal{S}_0$ to the variational inequality~\eqref{eq:var-form} for all $\varphi \in u_k + W^{1,p}_0(\Omega) \cap \mathcal{S}_0$, such that three claims of~\eqref{conver} are well satisfied. On the other hand, since $0<\gamma < \gamma_{2} \le \frac{n(p-1)}{n-1}$, due to~\cite[Lemma 3.3]{Scheven2012}, it is known that 
\begin{align*}
\left(\dfrac{1}{D_0^n}\int_{\Omega}|\nabla u_k|^{\gamma}dx\right)^{\frac{1}{\gamma}}\le C({\gamma})\left[\dfrac{|\mu_k|(\Omega)}{D_0^{n-1}}\right]^{\frac{1}{p-1}},
\end{align*}
for every $k \in \mathbb{N}$. As a consequence, it comes out 
\begin{align*}
\left(\dfrac{1}{D_0^n}\int_{\Omega}|\nabla u|^{\gamma}dx\right)^{\frac{1}{\gamma}} & \le C({\gamma}) \left(\dfrac{1}{D_0^n}\int_{\Omega}|\nabla u_k|^{\gamma}dx\right)^{\frac{1}{\gamma}} + C({\gamma}) \left(\dfrac{1}{D_0^n}\int_{\Omega}|\nabla u - \nabla u_k|^{\gamma}dx\right)^{\frac{1}{\gamma}} \\
& \le C({\gamma})\left[\dfrac{|\mu_k|(\Omega)}{D_0^{n-1}}\right]^{\frac{1}{p-1}} +  C({\gamma}) \left(\dfrac{1}{D_0^n}\int_{\Omega}|\nabla u - \nabla u_k|^{\gamma}dx\right)^{\frac{1}{\gamma}}.
\end{align*}
Passing to the limit when $k \to \infty$. Note that $\nabla u \to \nabla u_k$ in $L^{\gamma}(\Omega)$, we are allowed to obtain
\begin{align*}
\left(\dfrac{1}{D_0^n}\int_{\Omega}|\nabla u|^{\gamma}dx\right)^{\frac{1}{\gamma}}\le C({\gamma})\left[\dfrac{|\mu|(\Omega)}{D_0^{n-1}}\right]^{\frac{1}{p-1}},
\end{align*}
which implies from~\eqref{3.4} that
\begin{align}\label{3.5}
\int_{\Omega} |\nabla u|^{\gamma}dx \le C D_0^n \left[\dfrac{D_0^{n-\beta}(\varepsilon^2\lambda)^{\frac{p-1}{\gamma}}}{D_0^{n-1}}\right]^{\frac{\gamma}{p-1}}  \le  C D_0^{n-\frac{(\beta-1)\gamma}{p-1}} \varepsilon^2\lambda.
\end{align}
Applying the boundedness property of $\mathbf{M}_{\alpha}$ from $L^1(\mathbb{R}^n)$ into $L^{1,\infty}(\mathbb{R}^n)$, it yields from~\eqref{3.5} that
\begin{align}\label{eq:est100}
\mathcal{L}^n \left(\mathbb{V}\right) \le \mathcal{L}^n \left(\mathbb{V}_1\right)  \le \left[\frac{C}{a\lambda} \int_{\Omega} |\nabla u|^{\gamma}dx\right]^{\frac{n}{n-\alpha}} \le C_1 \left(a^{-1}\varepsilon^2\right)^{\frac{n}{n-\alpha}} \mathcal{L}^n ({B_{R_0}}(0)).
\end{align}
It is worth emphasizing that the last constant $C_1$ depends not only on $\tilde{\mathtt{data}}$ but also the ratio $D_0/R_0$. Furthermore, it is interesting to note that 
$$\left[n-\frac{(\beta-1)\gamma}{p-1}\right]\frac{n}{n-\alpha} = n.$$ 
Then, for $a>0$ we may choose  $\varepsilon_0 \in (0,1)$ such that 
\begin{align}\notag
C_1 (a^{-1}\varepsilon_0^2)^{\frac{n}{n-\alpha}} < \varepsilon_0 \Leftrightarrow \varepsilon_0 < C_1^{\frac{\alpha-n}{\alpha+n}} a^{\frac{n}{\alpha+n}},
\end{align}
which in turn will allow us to conclude the proof of lemma for all $\varepsilon \in (0,\varepsilon_0)$.
\end{proof}

\begin{lemma}\label{lem:cutoff}
Let $x \in \Omega$ and $R>0$ satisfying $B_R(x) \cap \Omega \not\subset \mathbb{W}$. Then the following inequality
\begin{equation}\label{eq:cutoff}
\mathcal{L}^n \left(\mathbb{V} \cap B_{R}(x)\right) \leq  \mathcal{L}^n \left(\left\{\mathbf{M}_{\alpha}^R(|\nabla u|^{\gamma}) > a\lambda\right\} \cap B_{R}(x)\right),  
\end{equation}
holds for any $a>3^{n-\alpha}$ and $\lambda>0$. Here the cut-off operator $\mathbf{M}_{\alpha}^R$ is given by
\begin{align}\label{def:M-R}
\mathbf{M}_{\alpha}^R(|\nabla u|^{\gamma})(y) := \sup_{0<\varrho_1<R} \varrho_1^{\alpha} \fint_{B_{\varrho_1}(y)} |\nabla u|^{\gamma}(z) dz.
\end{align}
\end{lemma}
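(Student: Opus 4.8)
The plan is to prove the "truncated maximal function" comparison \eqref{eq:cutoff} by a pointwise argument: for every $y \in \mathbb{V} \cap B_R(x)$ we show $\mathbf{M}_\alpha^R(|\nabla u|^\gamma)(y) > a\lambda$, which immediately gives the inclusion of level sets and hence the Lebesgue-measure inequality. Since $y \in \mathbb{V} \subset \mathbb{V}_1$, we know $\mathbf{M}_\alpha(|\nabla u|^\gamma)(y) > a\lambda$, so there is some radius $\varrho>0$ with $\varrho^\alpha \fint_{B_\varrho(y)} |\nabla u|^\gamma\,dz > a\lambda$. If $\varrho < R$ we are done, because the supremum defining $\mathbf{M}_\alpha^R$ already includes that radius. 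So the whole issue is to rule out (or rather, to handle) the case $\varrho \ge R$.

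First I would exploit the hypothesis $B_R(x) \cap \Omega \not\subset \mathbb{W}$: this means there exists a point $y_0 \in B_R(x) \cap \Omega$ with $\mathbf{M}_\alpha(|\nabla u|^\gamma)(y_0) \le \lambda$, i.e. for \emph{every} radius $t>0$ one has $t^\alpha \fint_{B_t(y_0)} |\nabla u|^\gamma\,dz \le \lambda$. Now suppose, for contradiction with the "large radius" branch, that $\varrho \ge R$ for the radius furnished above at the point $y$. Since $y, y_0 \in B_R(x)$, we have $|y - y_0| < 2R \le 2\varrho$, hence $B_\varrho(y) \subset B_{3\varrho}(y_0)$. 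Therefore
\begin{align*}
a\lambda < \varrho^\alpha \fint_{B_\varrho(y)} |\nabla u|^\gamma\,dz \le \varrho^\alpha \frac{\mathcal{L}^n(B_{3\varrho}(y_0))}{\mathcal{L}^n(B_\varrho(y))} \fint_{B_{3\varrho}(y_0)} |\nabla u|^\gamma\,dz = 3^n \varrho^\alpha \fint_{B_{3\varrho}(y_0)} |\nabla u|^\gamma\,dz.
\end{align*}
Applying the bound at $y_0$ with radius $t = 3\varrho$ gives $(3\varrho)^\alpha \fint_{B_{3\varrho}(y_0)} |\nabla u|^\gamma\,dz \le \lambda$, i.e. $\varrho^\alpha \fint_{B_{3\varrho}(y_0)} |\nabla u|^\gamma\,dz \le 3^{-\alpha}\lambda$. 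Combining, $a\lambda < 3^{n} \cdot 3^{-\alpha}\lambda = 3^{n-\alpha}\lambda$, contradicting $a > 3^{n-\alpha}$.

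Consequently the radius $\varrho$ realizing (to within the strict inequality) the supremum must satisfy $\varrho < R$, and then $\mathbf{M}_\alpha^R(|\nabla u|^\gamma)(y) \ge \varrho^\alpha \fint_{B_\varrho(y)} |\nabla u|^\gamma\,dz > a\lambda$. This shows $\mathbb{V} \cap B_R(x) \subset \{\mathbf{M}_\alpha^R(|\nabla u|^\gamma) > a\lambda\} \cap B_R(x)$ up to the obvious intersection with $B_R(x)$, and taking $\mathcal{L}^n$ of both sides yields \eqref{eq:cutoff}. A small technical point to be careful about: the maximal function is defined via a supremum, so strictly speaking $\mathbf{M}_\alpha(|\nabla u|^\gamma)(y) > a\lambda$ guarantees a radius $\varrho$ with $\varrho^\alpha \fint_{B_\varrho(y)}|\nabla u|^\gamma > a\lambda$ only after possibly passing to a value slightly below the supremum — but since we have strict inequality with room to spare ($a > 3^{n-\alpha}$), one can fix any intermediate value $a' \in (3^{n-\alpha}, a)$ and run the argument with $a'$, or simply note that the defining supremum being $> a\lambda$ forces at least one admissible radius to exceed $a\lambda$. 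The main (and essentially only) obstacle is the geometric covering step $B_\varrho(y) \subset B_{3\varrho}(y_0)$ together with correctly tracking the factor $3^{n-\alpha}$; everything else is bookkeeping.
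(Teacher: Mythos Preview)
Your proof is correct and follows essentially the same approach as the paper: both use the existence of a point $\xi\in B_R(x)$ with $\mathbf{M}_\alpha(|\nabla u|^\gamma)(\xi)\le\lambda$ together with the inclusion $B_\varrho(y)\subset B_{3\varrho}(\xi)$ for $\varrho\ge R$ to produce the factor $3^{n-\alpha}$. The only cosmetic difference is that the paper bounds the entire large-radius tail $\mathbf{T}_\alpha^R(|\nabla u|^\gamma)(y)\le 3^{n-\alpha}\lambda$ at once, whereas you pick a single radius witnessing $>a\lambda$ and derive a contradiction if it is $\ge R$.
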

\begin{proof}
For every $y \in B_R(x)$, let us decompose the classical fractional maximal operator as the form of the cut-off ones as 
\begin{align}\label{cutoff-1}
\mathbf{M}_{\alpha}(|\nabla u|^{\gamma})(y) = \max\left\{\mathbf{M}_{\alpha}^R(|\nabla u|^{\gamma})(y); \ \mathbf{T}_{\alpha}^R(|\nabla u|^{\gamma})(y) \right\},
\end{align}
where $\mathbf{M}_{\alpha}^R$ is defined in~\eqref{def:M-R} and $\mathbf{T}_{\alpha}^R$ is given by
\begin{align} \notag 
\mathbf{T}_{\alpha}^R(|\nabla u|^{\gamma})(y):= \sup_{\varrho_2 \ge R} \varrho_2^{\alpha} \fint_{B_{\varrho_2}(y)} |\nabla u|^{\gamma}(z) dz.
\end{align}
Since $B_R(x) \cap \Omega \not\subset \mathbb{W}$, then there exists $\xi \in B_R(x)$ such that $\mathbf{M}_{\alpha}(|\nabla u|^{\gamma})(\xi) \le \lambda$. Moreover, since $y, \xi \in B_R(x)$, we infer that for any $\varrho_2 \ge R$
\begin{align*}
B_{\varrho_2}(y) \subset B_{\varrho_2 + R}(x) \subset B_{\varrho_2 + 2R}(\xi) \subset B_{3\varrho_2}(\xi),
\end{align*}
which leads to
\begin{align}\label{cutoff-2}
\mathbf{T}_{\alpha}^R(|\nabla u|^{\gamma})(y) \le 3^n \sup_{\varrho_2 \ge R} \varrho_2^{\alpha} \fint_{B_{3\varrho_2}(\xi)} |\nabla u|^{\gamma}(z) dz \le 3^{n-\alpha} \mathbf{M}_{\alpha}(|\nabla u|^{\gamma})(\xi) \le 3^{n-\alpha} \lambda^{\gamma}.
\end{align}
Substituting~\eqref{cutoff-2} into~\eqref{cutoff-1}, one gets that
\begin{align}\nonumber
\mathbf{M}_{\alpha}(|\nabla u|^{\gamma})(y) \le \max\left\{\mathbf{M}_{\alpha}^R(|\nabla u|^{\gamma})(y); \ 3^{n-\alpha} \lambda \right\},
\end{align}
for all $ y \in B_R(x)$. Therefore, we are able to conclude~\eqref{eq:cutoff} for any $a>3^{n-\alpha}$. 
\end{proof}

\begin{lemma}\label{lem:step2}
Let $x \in \Omega$ and $R>0$ satisfying $B_R(x) \cap \Omega \not\subset \mathbb{W}$. There exist $a=a(\tilde{\mathtt{data}})>0$, $\delta = \delta(\tilde{\mathtt{data}}) \in (0,1/2)$ and $\varepsilon_0 = \varepsilon_0(\tilde{\mathtt{data}}) \in (0,1)$ such that if $(\mathbb{A};\Omega) \in \mathcal{H}_{r_0,\delta}$ for $r_0>0$ then the following inequality
\begin{equation}\label{eq:step2}
\mathcal{L}^n \left(\left\{\mathbf{M}_{\alpha}^R(|\nabla u|^{\gamma}) > a\lambda\right\} \cap B_{R}(x)\right) \le \varepsilon \mathcal{L}^n \left(B_{R}(x)\right),  
\end{equation}
holds for any $\lambda>0$ and $\varepsilon \in (0,\varepsilon_0)$.
\end{lemma}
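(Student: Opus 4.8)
The goal of Lemma~\ref{lem:step2} is a local density estimate for the super-level set of the cut-off fractional maximal operator $\mathbf{M}_\alpha^R$, which will feed into the Calder\'on-Zygmund covering argument later. The strategy is the classical one of Mingione et al.: inside the ball $B_R(x)$ we replace the solution $u$ by the comparison function $\tilde u$ constructed in Lemma~\ref{lem:comp2} (interior case) or Lemma~\ref{lem:comp3} (boundary case), exploit the $L^\infty$-bound \eqref{est:comp2-a}/\eqref{est:comp3-a} on $\nabla\tilde u$ together with the smallness of $\mathbf{M}_\beta(\mu)$, $\mathbf{M}_\beta(\operatorname{div}\mathbb{A}(\nabla\psi,\cdot))$ and (in the range $\chi_2=1$) of $\mathbf{M}_\sigma(|\nabla u|^{2-p})$ on the set $B_R(x)\cap\mathbb{W}^{c}$, and then control the measure of the bad set where $\mathbf{M}_\alpha^R(|\nabla u|^\gamma)$ is large by the measure of the set where $\mathbf{M}_\alpha^R(|\nabla u-\nabla\tilde u|^\gamma)$ is large, the latter being estimated by the weak-$(1,1)$ bound for $\mathbf{M}_\alpha$ (Lemma~\ref{lem:M_alpha} with $s=1$) applied to the comparison estimate \eqref{est:comp2-b}/\eqref{est:comp3-b}.

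\textbf{Step 1: normalisation and choice of radius.} Since $B_R(x)\cap\Omega\not\subset\mathbb{W}$, pick $\xi\in B_R(x)$ with $\mathbf{M}_\alpha(|\nabla u|^\gamma)(\xi)\le\lambda$. Either $B_{2R}(x)\subset\Omega$ (interior case) or $B_{2R}(x)$ meets $\partial\Omega$, in which case there is a boundary point $\zeta\in\partial\Omega$ with $B_{2R}(x)\subset B_{10R}(\zeta)$ and we work on $\Omega_{10R}=B_{10R}(\zeta)\cap\Omega$; by the Reifenberg flatness we may take $10R\le r_0$ after shrinking. Using $\mathbf{M}_\alpha(|\nabla u|^\gamma)(\xi)\le\lambda$ on balls containing the comparison ball, we get
\begin{align}\label{eq:step2-ave}
\Big(\fint_{B_{\varrho}}|\nabla u|^\gamma\,dx\Big)^{1/\gamma}\le C\lambda^{1/\gamma}\varrho^{-\alpha/\gamma},
\qquad
\Big(\fint_{B_{\varrho}}|\nabla u|^{2-p}\,dx\Big)^{1/(2-p)}\le C\big(\mathbf{M}_\sigma(|\nabla u|^{2-p})(\xi)\big)^{1/(2-p)}\varrho^{-\sigma/(2-p)},
\end{align}
for the relevant $\varrho\sim R$, and similarly the datum terms are bounded using $\xi\notin\mathbb{V}_3$ (and $\xi\notin\mathbb{V}_2$ when $\chi_2=1$): $\big[\varrho^{1-n}|\mu|(B_\varrho)+\varrho\fint_{B_\varrho}|\operatorname{div}\mathbb{A}(\nabla\psi,x)|dx\big]^{1/(p-1)}\le C(\varepsilon^2\lambda)^{1/\gamma}\varrho^{-\beta/\gamma}$ (here the relations $\beta=1+(p-1)\alpha/\gamma$ and $\sigma=(2-p)\alpha/\gamma$ in \eqref{beta-sigma} are exactly what make the powers of $\varrho$ match with $\varrho^{-\alpha/\gamma}$ after raising to the power $\gamma$). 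Plugging these into \eqref{est:comp2-a}/\eqref{est:comp3-a} gives $\|\nabla\tilde u\|_{L^\infty(B_{\varrho/8})}^\gamma\le C_2\,\lambda\,\varrho^{-\alpha}\big(1+\kappa^\gamma\chi_2+\varepsilon^2\big)\le C_2\lambda\varrho^{-\alpha}$, and into \eqref{est:comp2-b}/\eqref{est:comp3-b} gives $\fint_{B_\varrho}|\nabla u-\nabla\tilde u|^\gamma\,dx\le\big(\kappa_1\chi_1+(\kappa_2+\delta)\chi_2+C\varepsilon^2\big)\lambda\varrho^{-\alpha}=:\theta(\varepsilon,\delta)\,\lambda\varrho^{-\alpha}$ with $\theta$ small.

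\textbf{Step 2: splitting and the weak-type estimate.} Using $|\nabla u|^\gamma\le 2^\gamma(|\nabla\tilde u|^\gamma+|\nabla u-\nabla\tilde u|^\gamma)$ and the sublinearity of $\mathbf{M}_\alpha^R$, for $y\in B_R(x)$,
\begin{align}\label{eq:step2-split}
\mathbf{M}_\alpha^R(|\nabla u|^\gamma)(y)\le 2^\gamma\,\mathbf{M}_\alpha^R(|\nabla\tilde u|^\gamma)(y)+2^\gamma\,\mathbf{M}_\alpha^R(|\nabla u-\nabla\tilde u|^\gamma)(y).
\end{align}
For the first term, $\mathbf{M}_\alpha^R(|\nabla\tilde u|^\gamma)(y)\le\sup_{0<\varrho_1<R}\varrho_1^\alpha\|\nabla\tilde u\|_{L^\infty(B_{\varrho/8})}^\gamma\le C_2\lambda$ everywhere on $B_R(x)$ once $R\le\varrho/8$ is arranged (cover $B_R(x)$ by finitely many balls of radius $\varrho/100$; this is where the $\varrho/8$, $\varrho/100$ radii in Lemmas~\ref{lem:comp2}–\ref{lem:comp3} are used). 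Choose $a>\max\{3^{n-\alpha},2^{\gamma+1}C_2\}$; then $\{\mathbf{M}_\alpha^R(|\nabla u|^\gamma)>a\lambda\}\cap B_R(x)\subset\{\mathbf{M}_\alpha^R(|\nabla u-\nabla\tilde u|^\gamma)>c\,\lambda\}\cap B_R(x)$ with $c=2^{-\gamma}(a-2^\gamma C_2)>0$. Finally apply the weak-$(1,1)$ boundedness of the fractional maximal operator (Lemma~\ref{lem:M_alpha} with $s=1$, noting $\alpha<n$), localised via $\mathbf{M}_\alpha^R$, to $|\nabla u-\nabla\tilde u|^\gamma\chi_{B_\varrho}$:
\begin{align}\label{eq:step2-weak}
\mathcal{L}^n\big(\{\mathbf{M}_\alpha^R(|\nabla u-\nabla\tilde u|^\gamma)>c\lambda\}\cap B_R(x)\big)
\le C\Big(\frac{1}{c\lambda}\int_{B_\varrho}|\nabla u-\nabla\tilde u|^\gamma\,dx\Big)^{\!n/(n-\alpha)}
\le C\big(\theta(\varepsilon,\delta)\big)^{n/(n-\alpha)}\mathcal{L}^n(B_R(x)),
\end{align}
where the last step uses Step~1 together with $\mathcal{L}^n(B_\varrho)\sim\mathcal{L}^n(B_R(x))$ and the homogeneity-matching exponent identity $\big[n-\alpha\cdot\frac{n}{n-\alpha}\big]$—precisely the computation $[n-\frac{(\beta-1)\gamma}{p-1}]\frac{n}{n-\alpha}=n$ already recorded in the proof of Lemma~\ref{lem:step1}. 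Given $\varepsilon<\varepsilon_0$, first fix $\delta=\delta(\tilde{\mathtt{data}})$ so that $\kappa_2+\delta$ and $\delta$-dependent terms are small, then $\kappa_1,\kappa_2$ small, then $\varepsilon_0$ small enough that $C(\theta(\varepsilon,\delta))^{n/(n-\alpha)}<\varepsilon$; this yields \eqref{eq:step2}.

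\textbf{Main obstacle.} The delicate point is the bookkeeping of scaling exponents: one must verify that after inserting \eqref{eq:step2-ave} (and the datum bounds) into the comparison estimates, every term on the right-hand side carries exactly the factor $\lambda\,\varrho^{-\alpha}$ — this forces the precise choices $\beta=1+(p-1)\alpha/\gamma$, $\sigma=(2-p)\alpha/\gamma$ and the admissible range $\gamma\in(\gamma_1,\gamma_2)$, and the weak-$(1,1)$ bound in \eqref{eq:step2-weak} then closes because of the dimensional identity already used in Lemma~\ref{lem:step1}. The second, more structural, difficulty is making the interior and boundary arguments uniform: in the boundary case one replaces $B_\varrho$ by $\Omega_\varrho$ throughout, uses Lemma~\ref{lem:comp3} in place of Lemma~\ref{lem:comp2}, and must extend $\nabla u-\nabla\tilde u$ by zero outside $\Omega$ before applying Lemma~\ref{lem:M_alpha} on $\mathbb{R}^n$; the Reifenberg condition guarantees $\mathcal{L}^n(\Omega_\varrho)\gtrsim\mathcal{L}^n(B_\varrho)$ so the density comparison with $\mathcal{L}^n(B_R(x))$ survives.
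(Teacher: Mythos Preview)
Your overall strategy is the paper's: compare $u$ to the auxiliary function $\tilde u$ from Lemmas~\ref{lem:comp2}/\ref{lem:comp3}, use the $L^\infty$ bound on $\nabla\tilde u$ to kill one piece for $a$ large, and control the remaining piece via the weak $L^{n/(n-\alpha)}$ bound for $\mathbf{M}_\alpha$. But the execution has two genuine gaps.

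\textbf{First, you cannot invoke Lemmas~\ref{lem:comp2}/\ref{lem:comp3} on $u$ directly.} Those lemmas are stated for $u\in W^{1,p}(\Omega)\cap\mathcal{S}_0$ with $\mu\in L^1(\Omega)\cap W^{-1,p'}(\Omega)$, whereas the $u$ in Lemma~\ref{lem:step2} is only a limit of approximating solutions (Definition~\ref{def:SOLA}) with $\mu\in\mathcal{M}_b(\Omega)$; in the singular range $p\le 2-\frac1n$ the solution need not lie in $W^{1,p}$. The paper therefore works throughout with the approximants $(u_k,\mu_k)$, builds $\tilde u_k$ from Lemma~\ref{lem:comp2}/\ref{lem:comp3}, splits the level set into the three pieces $\mathbf{M}_\alpha^R(\chi_{B_{2R}}|\nabla u-\nabla u_k|^\gamma)$, $\mathbf{M}_\alpha^R(\chi_{B_{2R}}|\nabla u_k-\nabla\tilde u_k|^\gamma)$, $\mathbf{M}_\alpha^R(\chi_{B_{2R}}|\nabla\tilde u_k|^\gamma)$, and only then passes $k\to\infty$ using the convergence in~\eqref{conver}. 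Your two-term split~\eqref{eq:step2-split} skips this approximation and is not justified.

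\textbf{Second, your single point $\xi$ does too much.} You choose $\xi\notin\mathbb{W}$, which controls $\mathbf{M}_\alpha(|\nabla u|^\gamma)(\xi)$, and then assert ``$\xi\notin\mathbb{V}_3$ (and $\xi\notin\mathbb{V}_2$ when $\chi_2=1$)''; there is no reason for that. The paper reduces to the case $\mathbb{V}\cap B_R(x)\neq\emptyset$ and then picks a \emph{second} point $\xi_2\in\mathbb{V}\cap B_R(x)$, which by definition of $\mathbb{V}=\mathbb{V}_1\setminus(\mathbb{V}_2\cup\mathbb{V}_3)$ satisfies $\xi_2\notin\mathbb{V}_2\cup\mathbb{V}_3$, yielding the bounds~\eqref{cond:step2-b}--\eqref{cond:step2-c}. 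This has a quantitative consequence you miss: from $\xi_2\notin\mathbb{V}_2$ one gets
\[
R^{\alpha}\Big(\fint_{B_{4R}(x)}|\nabla u|^{2-p}\,dz\Big)^{\gamma/(2-p)}\le C\,\varepsilon^{-\gamma}\lambda,
\]
so the $\chi_2$-term in the comparison estimate contributes $(\kappa_2+\delta)\chi_2\,\varepsilon^{-\gamma}$, not $(\kappa_2+\delta)\chi_2$ as in your $\theta(\varepsilon,\delta)$. This adverse factor forces $\kappa_2$ and $\delta$ to be of order $\varepsilon^{2+\gamma}$ (the paper takes $\kappa_1=\varepsilon^2$, $\kappa_2=\delta=\varepsilon^{2+\gamma}$), rather than merely ``small'' as you write at the end of Step~2.
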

\begin{proof}
Since $B_R(x) \cap \Omega \not \subset \mathbb{W}$,  there exists $\xi_1 \in B_{R}(x) \cap \Omega$ such that 
\begin{align}\label{cond:step2}
\mathbf{M}_{\alpha}(|\nabla u|^{\gamma})(\xi_1) \le \lambda. 
\end{align}
Without loss of generality, we can assume $\mathbb{V} \cap B_R(x) \neq \emptyset$. Then, there exists $\xi_2 \in B_{R}(x) \cap \Omega$ such that 
\begin{align}\label{cond:step2-b}
\chi_2\left[\mathbf{M}_{\sigma}(|\nabla u|^{2-p})(\xi_2)\right]^{\frac{\gamma}{2-p}} \le \varepsilon^{-\gamma}\lambda,  
\end{align}
and
\begin{align}\label{cond:step2-c}
\left[\mathbf{M}_{\beta}(\mu)(\xi_2) + \mathbf{M}_{\beta}\left(\mathrm{div}\left(\mathbb{A}(\nabla \psi,\cdot)\right)\right)(\xi_2)\right]^{\frac{\gamma}{p-1}} \le  \varepsilon^2 \lambda. 
\end{align}
One also notices that $\mathbb{V}_2 = \emptyset$ when $\chi_2=0$ and it makes the estimate~\eqref{cond:step2-b} valid. Similar to the proof of Lemma~\ref{lem:step1}, we can find a sequence $(\mu_k)_{k \in \mathbb{N}} \subset L^1(\Omega) \cap W^{-1,p'}(\Omega)$ with $\mu_k \to \mu$ in the narrow topology of measures in $\mathcal{M}_b(\Omega)$ and solutions $u_k \in W^{1,p}(\Omega) \cap \mathcal{S}_0$ of the variational inequality~\eqref{eq:var-form} for all $\varphi \in u_k + W^{1,p}_0(\Omega) \cap \mathcal{S}_0$, such that the claims in~\eqref{conver} are well satisfied. At this stage, we separate the proof into two cases:  $B_{16R}(x) \subset \Omega$ and $B_{16R}(x) \cap \partial \Omega \neq \emptyset$. 

To handle the first case when $B_{16R}(x) \subset \Omega$, invoking Lemma~\ref{lem:comp2}, there exists $\tilde{u}_k \in W^{1,\infty}(B_{2R}(x)) \cap W^{1,p}(B_{4R}(x))$ such that if $[\mathbb{A}]^{r_0} \le \delta$, then for every $\kappa, \kappa_1, \kappa_2 \in (0,1)$, it holds
\begin{align}\label{est:uk-inf}
\|\nabla \tilde{u}_k\|_{L^{\infty}(B_{2R}(x))}  & \le C \left(\fint_{B_{4R}(x)} |\nabla u_k|^{\gamma} dz\right)^{\frac{1}{\gamma}}  + \kappa \chi_2 \left[\fint_{B_{4R}(x)} |\nabla u_k|^{2-p}dz\right]^{\frac{1}{2-p}} \notag \\
&  + C \left[\frac{|\mu_k|(B_{4R}(x))}{R^{n-1}} + R\fint_{B_{4R}(x)} \left|\mathrm{div}\left(\mathbb{A}(\nabla \psi,z)\right)\right|dz\right]^{\frac{1}{p-1}},
\end{align}
and
\begin{align}\label{est:uk-vk}
 \fint_{B_{4R}(x)} |\nabla u_k - \nabla \tilde{u}_k|^{\gamma} dz  & \le \kappa_1 \chi_1 \fint_{B_{4R}(x)} |\nabla u_k|^{\gamma} dz  + (\kappa_2 +\delta)\chi_2 \left[\fint_{B_{4R}(x)} |\nabla u_k|^{2-p}dz\right]^{\frac{\gamma}{2-p}} \notag \\
&  + C \left[\frac{|\mu_k|(B_{4R}(x))}{R^{n-1}} + R\fint_{B_{4R}(x)} \left|\mathrm{div}\left(\mathbb{A}(\nabla \psi,z)\right)\right|dz\right]^{\frac{\gamma}{p-1}},
\end{align} 
where $\chi_1$ and $\chi_2$ are defined as in Lemma~\ref{lem:B1}. The left-hand side of~\eqref{eq:step2} can be estimated by the sum of three separate terms as follows
\begin{align}\label{dec-N}
\mathcal{N} := \mathcal{L}^n \left(\left\{\mathbf{M}_{\alpha}^R(|\nabla u|^{\gamma}) > a\lambda\right\} \cap B_{R}(x)\right) \le \mathcal{N}_1 + \mathcal{N}_2 + \mathcal{N}_3, 
\end{align}
where 
\begin{align*}
\mathcal{N}_1 &:= \mathcal{L}^n \left(\left\{\mathbf{M}_{\alpha}^R\left(\chi_{B_{2R}(x)}|\nabla u - \nabla u_k|^{\gamma}\right) > a\lambda/3\right\} \cap B_{R}(x)\right),\\
\mathcal{N}_2 &:= \mathcal{L}^n \left(\left\{\mathbf{M}_{\alpha}^R\left(\chi_{B_{2R}(x)}|\nabla u_k - \nabla \tilde{u}_k|^{\gamma}\right) > a\lambda/3\right\} \cap B_{R}(x)\right),\\
\mathcal{N}_3 &:= \mathcal{L}^n \left(\left\{\mathbf{M}_{\alpha}^R\left(\chi_{B_{2R}(x)}|\nabla \tilde{u}_k|^{\gamma}\right) > a\lambda/3\right\} \cap B_{R}(x)\right).
\end{align*}
Since $\nabla u \to \nabla u_k$ in $L^{\gamma}(\Omega)$ as $k \to \infty$, it is readily to conclude the first term $\mathcal{N}_1 \to 0$. We now show that the third term $\mathcal{N}_3$ also vanishes as $k \to \infty$. For each $y \in B_{R}(x)$, since $B_{\varrho}(y) \subset B_{2R}(x)$ for all $\varrho \in (0,R)$, the inequality~\eqref{est:uk-inf} yields that
\begin{align}
\mathbf{M}_{\alpha}^R\left(\chi_{B_{2R}(x)}|\nabla \tilde{u}_k|^{\gamma}\right)(y) & = \sup_{0<\varrho<R} \left[\varrho^{\alpha} \fint_{B_{\varrho}(y)} \chi_{B_{2R}(x)}|\nabla \tilde{u}_k|^{\gamma} dz\right]  \le C R^{\alpha} \|\nabla \tilde{u}_k\|_{L^{\infty}(B_{2R}(x))}^{\gamma} \notag \\
& \le C R^{\alpha}\fint_{B_{4R}(x)} |\nabla u_k|^{\gamma} dz  + \kappa \chi_2 C R^{\alpha}\left[\fint_{B_{4R}(x)} |\nabla u_k|^{2-p}dz\right]^{\frac{\gamma}{2-p}} \notag \\
&  + C R^{\alpha}\left[\frac{|\mu_k|(B_{4R}(x))}{R^{n-1}} + R\fint_{B_{4R}(x)} \left|\mathrm{div}\left(\mathbb{A}(\nabla \psi,z)\right)\right|dz\right]^{\frac{\gamma}{p-1}},\notag
\end{align}
which immediately leads to
\begin{align}\label{est:uk-inf-2}
\limsup_{k \to \infty} \mathbf{M}_{\alpha}^R\left(\chi_{B_{2R}(x)}|\nabla \tilde{u}_k|^{\gamma}\right)(y)  & \le C R^{\alpha} \fint_{B_{4R}(x)} |\nabla u|^{\gamma} dz + \kappa \chi_2 C R^{\alpha} \left[\fint_{B_{4R}(x)} |\nabla u|^{2-p}dz\right]^{\frac{\gamma}{2-p}} \notag \\
&  + C R^{\alpha} \left[\frac{|\mu|(B_{4R}(x))}{R^{n-1}} + R\fint_{B_{4R}(x)} \left|\mathrm{div}\left(\mathbb{A}(\nabla \psi,z)\right)\right|dz\right]^{\frac{\gamma}{p-1}}.
\end{align}
Moreover, the existence of $\xi_1 \in B_{R}(x)$ satisfying~\eqref{cond:step2} ensures that $B_{4R}(x) \subset B_{5R}(\xi_1)$, it results that
\begin{align}\label{est:111}
R^{\alpha}\fint_{B_{4R}(x)} |\nabla u|^{\gamma} dz \le (5/4)^n R^{\alpha}\fint_{B_{5R}(\xi_1)} |\nabla u|^{\gamma} dz \le (5/4)^n 5^{-\alpha} \mathbf{M}_{\alpha}\left(|\nabla u|^{\gamma}\right)(\xi_1) \le 5^{n-\alpha} \lambda.
\end{align}
By this way, from~\eqref{cond:step2-b} and~\eqref{cond:step2-c}, it remains to control two last terms on the right-hand side of~\eqref{est:uk-inf-2} as follows 
\begin{align}\label{est:112}
R^{\alpha} \chi_2 \left[\fint_{B_{4R}(x)} |\nabla u|^{2-p}dz\right]^{\frac{\gamma}{2-p}} & \le R^{\alpha} \chi_2 \left[5^{n-\sigma} R^{-\sigma}  \mathbf{M}_{\sigma}\left(|\nabla u|^{2-p}\right)(\xi_2)\right]^{\frac{\gamma}{2-p}} \notag \\
& \le R^{\alpha} \left[5^{n-\sigma} R^{-\sigma} \left(\varepsilon^{-\gamma}\lambda\right)^{\frac{2-p}{\gamma}} \right]^{\frac{\gamma}{2-p}}  = 5^{\frac{\gamma(n-\sigma)}{2-p}} \varepsilon^{-\gamma} \lambda,
\end{align}
and
\begin{align}\label{est:113}
& R^{\alpha} \left[\frac{|\mu|(B_{4R}(x))}{R^{n-1}} + R\fint_{B_{4R}(x)} \left|\mathrm{div}\left(\mathbb{A}(\nabla \psi,z)\right)\right|dz\right]^{\frac{\gamma}{p-1}} \notag \\
& \hspace{2cm} \le R^{\alpha} \left[5^{n-\beta}\frac{R^{n-\beta}\mathbf{M}_{\beta}(\mu)(\xi_2)}{R^{n-1}} + 5^{n-\beta} R^{1-\beta} \mathbf{M}_{\beta}\left(\mathrm{div}\left(\mathbb{A}(\nabla \psi,\cdot)\right)\right)(\xi_2)\right]^{\frac{\gamma}{p-1}} \notag \\
& \hspace{2cm} \le R^{\alpha} \left[5^{n-\beta} R^{1-\beta} \left(\varepsilon^{2}\lambda\right)^{\frac{p-1}{\gamma}}\right]^{\frac{\gamma}{p-1}}  = 5^{\frac{\gamma(n-\beta)}{p-1}} \varepsilon^{2} \lambda.
\end{align}
It is worth mentioning here that $\alpha-\frac{\sigma\gamma}{2-p} = \alpha + \frac{\gamma(1-\beta)}{p-1} =0$ from~\eqref{beta-sigma}. Plugging~\eqref{est:111}, \eqref{est:112} and~\eqref{est:113} into~\eqref{est:uk-inf-2}, and choosing $\kappa \le \varepsilon^{\gamma}$, we infer that
\begin{align}\label{est:uk-inf-3}
\limsup_{k \to \infty} \mathbf{M}_{\alpha}^R\left(\chi_{B_{2R}(x)}|\nabla \tilde{u}_k|^{\gamma}\right)(y)  & \le C \left(1 + \kappa \chi_2 \varepsilon^{-\gamma} + \varepsilon^{2}\right) \lambda \le C_1 \lambda.
\end{align}
Analogously, from~\eqref{est:uk-vk} one has
\begin{align}
\limsup_{k \to \infty} \fint_{B_{2R}(x)} |\nabla u_k - \nabla \tilde{u}_k|^{\gamma} dz  & \le \kappa_1 \chi_1 \fint_{B_{2R}(x)} |\nabla u|^{\gamma} dz  + (\kappa_2+\delta)\chi_2 \left[\fint_{B_{2R}(x)} |\nabla u|^{2-p}dz\right]^{\frac{\gamma}{2-p}} \notag \\
&  + C \left[\frac{|\mu|(B_{2R}(x))}{R^{n-1}} + R\fint_{B_{2R}(x)} \left|\mathrm{div}\left(\mathbb{A}(\nabla \psi,z)\right)\right|dz\right]^{\frac{\gamma}{p-1}},\notag
\end{align}
which allows us to obtain
\begin{align}\label{est:uk-vk-2}
\limsup_{k \to \infty}\fint_{B_{2R}(x)} |\nabla u_k - \nabla \tilde{u}_k|^{\gamma} dz  & \le C_2 R^{-\alpha} \left(\kappa_1\chi_1  + (\kappa_2+\delta)\chi_2 \varepsilon^{-\gamma} + \varepsilon^2 \right)\lambda.
\end{align}
From~\eqref{est:uk-inf-3} and~\eqref{est:uk-vk-2}, there exists $k_0 \in \mathbb{N}$ such that 
\begin{align}\label{est:114}
\mathbf{M}_{\alpha}^R\left(\chi_{B_{2R}(x)}|\nabla \tilde{u}_k|^{\gamma}\right)(y) \le 2C_1 \lambda,
\end{align}
and 
\begin{align}\label{est:115}
\fint_{B_{2R}(x)} |\nabla u_k - \nabla \tilde{u}_k|^{\gamma} dz \le 2C_2 R^{-\alpha} \left(\kappa_1 \chi_1 + (\kappa_2+\delta)\chi_2 \varepsilon^{-\gamma} + \varepsilon^2 \right)\lambda,
\end{align}
hold for all $k \ge k_0$. The inequality~\eqref{est:114} comes out $\mathcal{N}_3 = 0$ for all $a/3 > 2C_1$ and $k \ge k_0$. We claim that from~\eqref{dec-N} and~\eqref{est:115} so that we deduce
\begin{align}
\mathcal{N} & \le  \left[\frac{C R^n}{a\lambda/3}\fint_{B_{2R}(x)} |\nabla u - \nabla u_k|^{\gamma} dz\right]^{\frac{n}{n-\alpha}} + \left[\frac{C R^n}{a\lambda/3}\fint_{B_{2R}(x)} |\nabla u_k - \nabla \tilde{u}_k|^{\gamma} dz\right]^{\frac{n}{n-\alpha}} \notag \\
& \le C_3 R^n \left[a^{-1} \left(\kappa_1\chi_1  + (\kappa_2+\delta)\chi_2 \varepsilon^{-\gamma} + \varepsilon^2 \right)\right]^{\frac{n}{n-\alpha}} + \left[\frac{CR^n}{a\lambda/3}\fint_{B_{2R}(x)} |\nabla u_k - \nabla \tilde{u}_k|^{\gamma} dz\right]^{\frac{n}{n-\alpha}},\notag
\end{align}
Send $k \to \infty$ to obtain
\begin{align}\label{est:116}
\mathcal{N} & \le C_4 \left[a^{-1} \left(\kappa_1\chi_1  + (\kappa_2+\delta)\chi_2 \varepsilon^{-\gamma} + \varepsilon^2 \right)\right]^{\frac{n}{n-\alpha}} \mathcal{L}^n \left(B_{R}(x)\right). 
\end{align}
For the purpose to prove~\eqref{eq:step2}, let us choose free parameters $\kappa_1, \kappa_2, \delta$ in \eqref{est:116} such that the exponent of $\varepsilon$ is greater than 1. In particular, we can take
$\kappa_1 = \varepsilon^2$ and $\kappa_2 = \delta = \varepsilon^{2+\gamma}$. Then, it allows us to find $\varepsilon_0 \in (0,1)$ such that
\begin{align*}
C_4 \left[ a^{-1} \left(\kappa_1 \chi_1  + (\kappa_2+\delta)\chi_2 \varepsilon_0^{-\gamma} + \varepsilon_0^2 \right)\right]^{\frac{n}{n-\alpha}} < \varepsilon_0.
\end{align*}

Next, we proceed the second case when $B_{16R}(x) \cap \partial \Omega \neq \emptyset$. With its aid,  one can find $y \in \partial \Omega$ such that $|y - x| = \mathrm{dist}(x,\partial \Omega) < 16R$. In this case, we will make use of Lemma~\ref{lem:comp3} for $B_{20R}(y) \supset B_{2R}(x)$. It means that there exists $\tilde{v}_k \in W^{1,p}(B_{40R}(y)) \cap W^{1,\infty}(B_{20R}(y))$ such that if $(\mathbb{A};\Omega) \in \mathcal{H}_{r_0,\delta}$ for $r_0>0$ and $\delta>0$, then for every $\kappa, \kappa_1, \kappa_2 \in (0,1)$, there holds
\begin{align}
\|\nabla \tilde{v}_k\|_{L^{\infty}(B_{20R}(y))}  & \le C \left(\fint_{B_{200R}(y)} |\nabla u_k|^{\gamma} dz\right)^{\frac{1}{\gamma}}  + \kappa \chi_2 \left[\fint_{B_{200R}(y)} |\nabla u_k|^{2-p}dz\right]^{\frac{1}{2-p}} \notag \\
&  + C \left[\frac{|\mu_k|(B_{200R}(y))}{R^{n-1}} + R\fint_{B_{200R}(y)} \left|\mathrm{div}\left(\mathbb{A}(\nabla \psi,z)\right)\right|dz\right]^{\frac{1}{p-1}},\notag
\end{align}
and
\begin{align}
 \fint_{B_{200R}(y)} |\nabla u_k - \nabla \tilde{v}_k|^{\gamma} dz  & \le \kappa_1\chi_1 \fint_{B_{200R}(y)} |\nabla u_k|^{\gamma} dz  + (\kappa_2+\delta)\chi_2 \left[\fint_{B_{200R}(y)} |\nabla u_k|^{2-p}dz\right]^{\frac{\gamma}{2-p}} \notag \\
&  + C \left[\frac{|\mu_k|(B_{200R}(y))}{R^{n-1}} + R\fint_{B_{200R}(y)} \left|\mathrm{div}\left(\mathbb{A}(\nabla \psi,z)\right)\right|dz\right]^{\frac{\gamma}{p-1}}.\notag
\end{align}
The remaining part of proof can be shown by the same argument as in the previous case. 
\end{proof}

\section{Proofs of main theorems}
\label{sec:main}

We now give detailed proofs of main Theorems by applying the following lemma. It is noteworthy that here, we make use of the Calder\'on-Zygmund (or Vitali) type of covering lemma, is known as Calder\'on-Zygmund-Krylov-Safonov decomposition, allowing to work with a family of balls instead of cubes (see \cite[Lemma 4.2]{CC1995}).

\begin{lemma}\label{lem:Vitali}
Let $\Omega$ be a $(r_0,\delta)$-Reifenberg flat domain, $0<  R_0 \le r_0$ and $D\subset E \subset \Omega$ be measurable sets. Suppose that
\begin{enumerate}
\item[i)] $\mathcal{L}^n(D) < \varepsilon \mathcal{L}^n(B_{R_0})$ for some $\varepsilon \in (0,1)$;
\item[ii)] for all $x\in \Omega$ and $\rho \in (0,R_0]$, if $\mathcal{L}^n(D\cap B_{\rho}(x)) \geq \varepsilon \mathcal{L}^n(B_{\rho}(x))$ then $B_{\rho}(x)\cap \Omega\subset E$.
\end{enumerate} 	
Then there is a constant $C = C(n)>0$ such that $\mathcal{L}^n(D) \leq C\varepsilon \mathcal{L}^n(E)$.
\end{lemma}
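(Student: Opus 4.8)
The plan is to carry out a stopping-time (Calder\'on--Zygmund) argument on balls, combined with the classical Vitali $5r$-covering lemma and the lower measure-density property that Reifenberg flatness confers on $\Omega$.

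First I would reduce to the set of Lebesgue density points of $D$: since $D$ is measurable, almost every $x\in D$ is a point of density one. Fix such an $x$. The ratio $r(\rho):=\mathcal{L}^n(D\cap B_\rho(x))/\mathcal{L}^n(B_\rho(x))$ is continuous on $(0,R_0]$, satisfies $r(\rho)\to 1$ as $\rho\to 0^+$, and, by assumption (i), $r(R_0)\le \mathcal{L}^n(D)/\mathcal{L}^n(B_{R_0})<\varepsilon$. Hence $\rho_x:=\sup\{\rho\in(0,R_0]:\ r(\rho)\ge\varepsilon\}$ is a well-defined number in $(0,R_0)$ with $r(\rho_x)=\varepsilon$ and $r(\rho)<\varepsilon$ for all $\rho\in(\rho_x,R_0]$. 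Since $\mathcal{L}^n(D\cap B_{\rho_x}(x))=\varepsilon\,\mathcal{L}^n(B_{\rho_x}(x))$, assumption (ii) applies at this scale and yields $B_{\rho_x}(x)\cap\Omega\subset E$.

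Next I would feed the family $\{B_{\rho_x}(x):\ x \text{ a density point of } D\}$ (radii bounded by $R_0$) into the Vitali covering lemma, extracting a countable pairwise disjoint subfamily $\{B_{\rho_{x_i}}(x_i)\}_i$ with $D\subset\bigcup_i B_{5\rho_{x_i}}(x_i)$ up to a null set, so that $\mathcal{L}^n(D)\le\sum_i \mathcal{L}^n(D\cap B_{5\rho_{x_i}}(x_i))$. The heart of the proof is the per-ball estimate $\mathcal{L}^n(D\cap B_{5\rho_{x_i}}(x_i))\le 5^n\varepsilon\,\mathcal{L}^n(B_{\rho_{x_i}}(x_i))$, established by splitting into two cases: if $5\rho_{x_i}\le R_0$ it is exactly the stopping property $r(5\rho_{x_i})<\varepsilon$ above; if $5\rho_{x_i}>R_0$ it follows from assumption (i) and $R_0<5\rho_{x_i}$, namely $\mathcal{L}^n(D\cap B_{5\rho_{x_i}}(x_i))\le\mathcal{L}^n(D)<\varepsilon\,\mathcal{L}^n(B_{R_0})\le 5^n\varepsilon\,\mathcal{L}^n(B_{\rho_{x_i}}(x_i))$.

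Finally I would invoke the lower measure density of Reifenberg flat domains: because $\delta<1/2$, there is $c_0=c_0(n)>0$ with $\mathcal{L}^n(\Omega\cap B_{\rho_{x_i}}(x_i))\ge c_0\,\mathcal{L}^n(B_{\rho_{x_i}}(x_i))$ for every $x_i\in\Omega$ and $\rho_{x_i}\le r_0$; combined with $B_{\rho_{x_i}}(x_i)\cap\Omega\subset E$ this gives $\mathcal{L}^n(B_{\rho_{x_i}}(x_i))\le c_0^{-1}\mathcal{L}^n(E\cap B_{\rho_{x_i}}(x_i))$. Summing the per-ball estimates over the disjoint family then yields $\mathcal{L}^n(D)\le 5^n c_0^{-1}\varepsilon\sum_i\mathcal{L}^n(E\cap B_{\rho_{x_i}}(x_i))\le 5^n c_0^{-1}\varepsilon\,\mathcal{L}^n(E)$, which is the assertion with $C(n)=5^n c_0^{-1}$. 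I expect the only genuinely delicate point to be the boundary book-keeping for the measure-density bound --- showing (or quoting, from the Reifenberg flatness in Definition~\ref{def:Reifenberg}) that the fixed fraction $c_0(n)$ persists when $B_{\rho_{x_i}}(x_i)$ meets $\partial\Omega$ --- together with the care needed to line up the $5r$-dilated balls with assumptions (i)--(ii).
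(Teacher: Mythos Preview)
The paper does not actually prove this lemma; it merely states it and points to \cite[Lemma~4.2]{CC1995} (the Calder\'on--Zygmund--Krylov--Safonov decomposition, originally formulated for cubes). Your proposal is a correct, self-contained proof and follows the standard route used when transplanting that result to balls in Reifenberg flat domains: density-point reduction, a continuous stopping-time for the ratio $r(\rho)$, assumption~(ii) at the stopping radius, Vitali $5r$-extraction, the two-case per-ball bound, and the lower measure-density of $\Omega$ in balls. This is exactly the argument one finds in the literature that the paper is implicitly invoking (e.g.\ in Byun--Wang~\cite{BW2004} type settings), so there is no genuine difference in strategy to report.

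The one point you rightly flag as delicate---the uniform lower bound $\mathcal{L}^n(\Omega\cap B_{\rho}(x))\ge c_0\,\mathcal{L}^n(B_\rho(x))$ for $x\in\Omega$, $\rho\le r_0$---is indeed a known consequence of Definition~\ref{def:Reifenberg}: if $B_\rho(x)\subset\Omega$ it is trivial, and otherwise one takes the nearest boundary point $z$, applies the flatness condition at $z$ on a comparable scale, and uses that the half-space $\{x_n'>\delta\rho\}\cap B_\rho(z)$ sits inside $\Omega$. Strictly speaking the resulting $c_0$ depends on $\delta$ as well as $n$; since throughout the paper $\delta$ is eventually fixed small (certainly $\delta<\tfrac12$), this is harmless, and in any case matches the level of precision of the paper's own citation.
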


\begin{proof}[Proof of Theorem~\ref{theo:main-A}]
The level-set inequality~\eqref{dist-ineq} is a direct consequence of the following inequality 
$$\mathcal{L}^n \left(\mathbb{V}_1 \setminus \left(\mathbb{V}_2 \cup \mathbb{V}_3\right)\right) \le C \varepsilon \mathcal{L}^n \left(\mathbb{W}\right),$$ 
and with Lemma~\ref{lem:step1} in hand for two sets $D=\mathbb{V}$ and $E=\mathbb{W}$, respectively. Roughly speaking, we only need to show that $\mathbb{V}$ and $\mathbb{W}$ satisfying two hypotheses $i)$, $ii)$ of Lemma~\ref{lem:Vitali}. Certainly, the first one $i)$ is directly valid by Lemma~\ref{lem:step1}. To handle  the next claim $ii)$, we show by the contradiction. Indeed, let us assume that $B_R(x) \cap \Omega \not \subset \mathbb{W}$ for some $x \in \Omega$ and $0<R<r_0$. Then, taking Lemmas~\ref{lem:cutoff} and~\ref{lem:step2} into account, we shall reach a contradiction and the proof is complete. 
\end{proof}

\begin{proof}[Proof of Theorem~\ref{theo:main-B}]
Thanks to Theorem~\ref{theo:main-A}, there exist constants $a>0$, $\varepsilon_0 \in (0,1)$ and $\delta \in (0,1/2)$ such that if $(\mathbb{A};\Omega) \in \mathcal{H}_{r_0,\delta}$ for $r_0>0$, then~\eqref{dist-ineq} holds for any $\lambda>0$ and $\varepsilon \in (0,\varepsilon_0)$. Further, let $0<q<\infty$ and $0<s<\infty$, due to definition of Lorentz space $L^{q,s}(\Omega)$ and inequality~\eqref{dist-ineq} one has
\begin{align*}
\|\mathbf{M}_{\alpha}(|\nabla u|^{\gamma})\|_{L^{q,s}(\Omega)}^s & = a^s  q \int_0^\infty \lambda^{s-1}\mathcal{L}^n \left(\mathbb{V}_1\right)^{\frac{s}{q}} d\lambda \\
& \le C a^s  q \int_0^\infty \lambda^{s-1}\mathcal{L}^n \left(\mathbb{V}_2\right)^{\frac{s}{q}} d\lambda + C a^s q \int_0^\infty \lambda^{s-1}\mathcal{L}^n \left(\mathbb{V}_3\right)^{\frac{s}{q}} d\lambda \\
& \qquad + C a^s \varepsilon^{\frac{s}{q}} q \int_0^\infty \lambda^{s-1}\mathcal{L}^n \left(\mathbb{W}\right)^{\frac{s}{q}} d\lambda \\
& \le C a^s \varepsilon^{\gamma s} \chi_2 \left\|\left[\mathbf{M}_{\sigma}(|\nabla u|^{2-p})\right]^{\frac{\gamma}{2-p}}\right\|_{L^{q,s}(\Omega)}^s \\
& \qquad + C a^s \varepsilon^{-2s} \left\|\left[\mathbf{M}_{\beta}(\mu) + \mathbf{M}_{\beta}\left(\mathrm{div}\left(\mathbb{A}(\nabla \psi,\cdot)\right)\right)\right]^{\frac{\gamma}{p-1}}\right\|_{L^{q,s}(\Omega)}^s \\
& \qquad + C a^s \varepsilon^{\frac{s}{q}} \|\mathbf{M}_{\alpha}(|\nabla u|^{\gamma})\|_{L^{q,s}(\Omega)}^s,
\end{align*}
which leads to
\begin{align}\label{est:norm-a}
\|\mathbf{M}_{\alpha}(|\nabla u|^{\gamma})\|_{L^{q,s}(\Omega)} & \le C a \varepsilon^{\gamma} \chi_2 \left\|\left[\mathbf{M}_{\sigma}(|\nabla u|^{2-p})\right]^{\frac{\gamma}{2-p}}\right\|_{L^{q,s}(\Omega)} + C a \varepsilon^{\frac{1}{q}} \|\mathbf{M}_{\alpha}(|\nabla u|^{\gamma})\|_{L^{q,s}(\Omega)} \notag \\
& \qquad + C a \varepsilon^{-2} \left\|\left[\mathbf{M}_{\beta}(\mu) + \mathbf{M}_{\beta}\left(\mathrm{div}\left(\mathbb{A}(\nabla \psi,\cdot)\right)\right)\right]^{\frac{\gamma}{p-1}}\right\|_{L^{q,s}(\Omega)}.
\end{align}
Lately, let us set $\varepsilon_1 = \min\left\{\varepsilon_0, (2Ca)^{-1}\right\}$. From~\eqref{est:norm-a}, it is readily verified that
\begin{align*}
\|\mathbf{M}_{\alpha}(|\nabla u|^{\gamma})\|_{L^{q,s}(\Omega)} & \le 2C a \varepsilon^{\gamma} \chi_2 \left\|\left[\mathbf{M}_{\sigma}(|\nabla u|^{2-p})\right]^{\frac{\gamma}{2-p}}\right\|_{L^{q,s}(\Omega)} \\
&\qquad + 2 C a \varepsilon^{-2} \left\|\left[\mathbf{M}_{\beta}(\mu) + \mathbf{M}_{\beta}\left(\mathrm{div}\left(\mathbb{A}(\nabla \psi,\cdot)\right)\right)\right]^{\frac{\gamma}{p-1}}\right\|_{L^{q,s}(\Omega)},
\end{align*}
for every $\varepsilon \in (0,\varepsilon_1)$. On the other hand, when $s=\infty$, the latest inequality will be proved by the same argument. Hence, the desired result will be obtained by setting $\tilde{\epsilon} = 2C a \varepsilon_1^{\gamma}$.  
\end{proof}


\end{document}